\numberwithin{equation}{section}
\newtheorem{thm}{Theorem}[section]
\newtheorem{defn}[thm]{Definition}
\newtheorem{lem}[thm]{Lemma}
\newtheorem{prop}[thm]{Proposition}
\newtheorem{cor}[thm]{Corollary}
\theoremstyle{remark}
\newtheorem{rmk}[thm]{Remark}
\theoremstyle{definition}
\newtheorem{exa}[thm]{Example}
\begin{document}

\title{Wide subcategories of a domestic weighted projective line}
\author{Yiyu Cheng}
\keywords{wide subcategory, weighted projective line, $\vec c$-invariant wide subcategory, poset}

\begin{abstract}
   For a weighted projective line $\mathbb X$, a wide subcategory of the category $\mathrm{coh} \text{-} \mathbb X$ of coherent sheaves over $\mathbb X$ is called $\vec c$-invariant if it is closed under the grading shift of the canonical element $\vec c$.
   We proved that a $\vec c$-invariant wide subcategory of $\mathrm{coh} \text{-} \mathbb X$ containing a vector bundle is the perpendicular category of a torsion exceptional sequence. If $\mathbb X$ is of dometic type, then the poset of wide subcategories of $\mathrm{coh} \text{-} \mathbb X$ is the union of the poset of wide subcategories generated by an exceptional sequence and the poset of $\vec c$-invariant wide subcategories.
\end{abstract}

\maketitle

\section{Introduction}

Wide subcategories of a hereditary abelian category have been extensively studied in representation theory of algebras. According to \cite[Theorem~4.7]{ingalls2009noncrossing}, for a representation-finite hereditary algebra $A$, the poset of wide subcategories of $A$-mod, which is ordered by inclusion, is isomorphic to the lattice of non-crossing partitions for the Weyl group of $A$. We recall that non-crossing partitions were introduced in \cite{kreweras1972partitions} and generalised in the context of Coxeter groups in \cite{brady2001partial,brady2002k}. The theorem mentioned above was generalized to any hereditary algebra in \cite[Theorem~1.2]{hubery2016categorification}, but only wide subcategories generated by an exceptional sequence were considered.

We study the poset of all wide subcategories for the category of coherent sheaves over a domestic weighted projective line. It is well known that a domestic weighted projective line is derived equivalent to  a tame hereditary algebra; see \cite[Subsection~5.4.1]{geigle1987class} and \cite[Theorem~3.5]{lenzing2011weighted}. So in fact, the poset of all wide subcategories for the category of modules over a tame hereditary algebra is studied.

In \cite[Subsection~4.1]{krause2017derived}, the authors described the poset(lattice) of wide subcategories of $\mathrm{coh} \text{-} \mathbb P^1$, the category of coherent sheaves over the projective line. It is isomorphic to the coproduct of lattices
\[
   \{\text{specialization closed subsets of } \mathbb P^1 \} \coprod \mathbb Z,
\]
where $\mathbb Z$ denotes the lattice given by the following Hasse diagram:

\begin{equation} \label{lattice_z}
   \begin{split}
      \xymatrix{
 & & \bullet \ar@{-}[dll] \ar@{-}[dl] \ar@{-}[d] \ar@{-}[dr] \ar@{-}[drr] & & \\ 
 \cdots & \bullet & \bullet & \bullet & \cdots \\  
 & & \bullet \ar@{-}[ull] \ar@{-}[ul] \ar@{-}[u] \ar@{-}[ur] \ar@{-}[urr] & &    }
   \end{split}
\end{equation}
Note that $\mathbb Z$ corresponds to the lattice of wide subcategories which can be generated by an exceptional sequence.
Since $\mathbb P^1$ is a domestic weighted projective line, our work generalises this example.

For a weighted projective line $\mathbb X$ and the category $\mathrm{coh}\text{-}\mathbb X$ of coherent sheaves over $\mathbb X$, we donote by $\mathrm{Wid}(\mathrm{coh} \text{-} \mathbb X)$ the poset of wide subcategories of $\mathrm{coh} \text{-} \mathbb X$, and $\mathrm{Exc}(\mathrm{coh} \text{-} \mathbb X)$ the poset of those can be generated by an exceptional sequence, and $\mathrm{Wid}(\mathrm{coh} \text{-} \mathbb X)^{\vec c}$ the poset of $\vec c$-invariant wide subcategories. Here $\vec c$ is the canonical element of the grading group of $\mathbb X$, and a subcategory is called $\vec c$-invariant if it is closed under taking grading shift of degree $\vec c$. We have the following main theorem. 

\begin{thm}
   (See Theorem~\ref{domestic_order}) Let $\mathbb X$ be a weighted projective line of domestic type. Then $\mathrm{Wid}(\mathrm{coh} \text{-} \mathbb X)$ can be presented as a pushout of posets
   \[
   \xymatrix{
   \mathrm{Exc}(\mathrm{coh} \text{-} \mathbb X) \cap \mathrm{Wid}(\mathrm{coh} \text{-} \mathbb X)^{\vec c} \ar@{^(->}[r] \ar@{^(->}[d] & \mathrm{Wid}(\mathrm{coh} \text{-} \mathbb X)^{\vec c} \ar@{^(->}[d]\\
   \mathrm{Exc}(\mathrm{coh} \text{-} \mathbb X) \ar@{^(->}[r] & \mathrm{Wid}(\mathrm{coh} \text{-} \mathbb X).
   }
   \]
\end{thm}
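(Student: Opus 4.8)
The plan is to verify the universal property of the pushout in the category of posets directly, after reducing it to two concrete conditions. Since all four posets carry the order given by inclusion of subcategories and all four maps in the square are the corresponding inclusions, the standard description of pushouts in $\mathbf{Pos}$ shows that the square is a pushout if and only if: (i) as sets $\mathrm{Wid}(\mathrm{coh}\text{-}\mathbb X)=\mathrm{Exc}(\mathrm{coh}\text{-}\mathbb X)\cup\mathrm{Wid}(\mathrm{coh}\text{-}\mathbb X)^{\vec c}$, with the displayed corner realised as the genuine intersection; and (ii) the inclusion order on $\mathrm{Wid}(\mathrm{coh}\text{-}\mathbb X)$ is the transitive closure of the two inclusion orders on $\mathrm{Exc}(\mathrm{coh}\text{-}\mathbb X)$ and on $\mathrm{Wid}(\mathrm{coh}\text{-}\mathbb X)^{\vec c}$. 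Condition (ii) is equivalent to an interpolation property: whenever $\mathcal W\subseteq\mathcal W'$ with $\mathcal W$ in one wing and $\mathcal W'$ in the other, there is a $\mathcal U$ in $\mathrm{Exc}(\mathrm{coh}\text{-}\mathbb X)\cap\mathrm{Wid}(\mathrm{coh}\text{-}\mathbb X)^{\vec c}$ with $\mathcal W\subseteq\mathcal U\subseteq\mathcal W'$. I would isolate (i) and (ii) as the two lemmas to be proved.

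For (i), the first observation is that twisting by $\vec c$ acts as the identity on the full subcategory of torsion (finite length) sheaves: in the tube at an exceptional point of weight $p_i$ the twist by $\vec x_i$ realises the cyclic shift of order $p_i$ and $\vec c=p_i\vec x_i$, while at ordinary points the twist fixes the point. Hence every torsion wide subcategory is automatically $\vec c$-invariant, so a wide subcategory that is \emph{not} $\vec c$-invariant must contain a vector bundle, and the whole of (i) reduces to the single implication that such a $\mathcal W$ is generated by an exceptional sequence. Here the domestic hypothesis is indispensable: over a domestic $\mathbb X$ the indecomposable bundles are controlled by the tame root system, so a wide subcategory containing a bundle but failing to be closed under $(\vec c)$ cannot absorb an infinite $\vec c$-orbit and is forced to be generated by finitely many exceptional objects. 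The $\vec c$-invariant case is already covered: by the perpendicular-category theorem recalled above, a $\vec c$-invariant $\mathcal W$ with a bundle equals $\mathcal S^{\perp}$ for a torsion exceptional sequence $\mathcal S$, and since $\mathcal S^{\perp}\simeq\mathrm{coh}\text{-}\mathbb X'$ for a domestic $\mathbb X'$ it even carries a tilting object, so it lies in $\mathrm{Exc}(\mathrm{coh}\text{-}\mathbb X)\cap\mathrm{Wid}(\mathrm{coh}\text{-}\mathbb X)^{\vec c}$.

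The previous paragraph also pins down the two wings: $\mathrm{Exc}(\mathrm{coh}\text{-}\mathbb X)\setminus\mathrm{Wid}(\mathrm{coh}\text{-}\mathbb X)^{\vec c}$ consists of exceptionally generated subcategories that contain a bundle but are not $\vec c$-invariant, while $\mathrm{Wid}(\mathrm{coh}\text{-}\mathbb X)^{\vec c}\setminus\mathrm{Exc}(\mathrm{coh}\text{-}\mathbb X)$ consists of torsion wide subcategories that are not generated by an exceptional sequence, typically those containing a full tube. For (ii) this already rules out half of the cross relations: a bundle-containing $\mathcal W$ can never be contained in a torsion $\mathcal W'$, so the only comparabilities to interpolate are $\mathcal W'\subsetneq\mathcal W$ with $\mathcal W'$ torsion and $\mathcal W$ exceptionally generated. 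For such a pair I would take $\mathcal U$ to be the smallest $\vec c$-invariant wide subcategory that contains $\mathcal W'$ and is contained in $\mathcal W$; it contains a bundle, being sandwiched inside the exceptionally generated $\mathcal W\simeq\mathrm{coh}\text{-}\mathbb X''$ whose $\vec c$-invariant-with-bundle subcategories are its own perpendicular categories, so by the perpendicular-category theorem $\mathcal U=\mathcal T^{\perp}$ lies in the intersection, as required.

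The step I expect to be the main obstacle is twofold, and both halves are exactly where domesticity does the work. Conceptually the heart is the implication in (i) that a non-$\vec c$-invariant wide subcategory is exceptionally generated; technically the delicate point is the construction of the interpolant $\mathcal U$ in (ii), i.e.\ checking that the smallest $\vec c$-invariant hull of a full tube inside an exceptionally generated $\mathcal W$ again contains a bundle and is a perpendicular category rather than a bare union of tubes. I would attack both by reducing along perpendicular categories to the rank-one and tame hereditary situation, invoking the Hubery--Krause description of $\mathrm{Exc}$ there \cite{hubery2016categorification}, and using the triviality of $(\vec c)$ on torsion to keep the bookkeeping over the finitely many exceptional tubes and the line-bundle lattice uniform.
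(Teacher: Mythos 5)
Your overall reduction is sound, and your condition (i) matches the paper: torsion wide subcategories are automatically $\vec c$-invariant, so everything hinges on showing that a wide subcategory containing a vector bundle lies in $\mathrm{Exc}(\mathrm{coh}\text{-}\mathbb X)$. But there your argument is only a gesture: the phrase ``cannot absorb an infinite $\vec c$-orbit'' is not a mechanism. The paper's actual proof (Proposition~\ref{thick_domestic_bundle}) takes a bundle $F\in\mathcal W$ and shows every indecomposable in $F^{\perp_{\mathcal W}}$ is exceptional --- bundles by domesticity (Theorem~\ref{domestic}(1)), and torsion sheaves because a non-exceptional one has full composition-factor support in its tube (Lemma~\ref{uni_exc}), hence receives a nonzero map from a quotient line bundle of $F$ --- and then applies Corollary~\ref{wide_gen_exc}. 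Note also that this holds for \emph{all} bundle-containing $\mathcal W$, not just the non-$\vec c$-invariant ones, which matters below.

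The genuine gap is in your step (ii). The cross-comparabilities you set out to interpolate --- $\mathcal W'\subsetneq\mathcal W$ with $\mathcal W'$ torsion non-exceptionally-generated and $\mathcal W\in\mathrm{Exc}(\mathrm{coh}\text{-}\mathbb X)\setminus\mathrm{Wid}(\mathrm{coh}\text{-}\mathbb X)^{\vec c}$ --- are in fact \emph{vacuous}, and the result you need to see this is Proposition~\ref{contain_non_exc}: a wide subcategory containing a vector bundle and an indecomposable non-exceptional torsion sheaf is automatically $\vec c$-invariant (the proof pushes the sequences $0\to E\to E(\vec c)\to \bigoplus_j U_j\to 0$ through $\mathcal W$). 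Since $\mathcal W'\in\mathrm{NExc}$ must contain a non-exceptional indecomposable (else Corollary~\ref{wide_gen_exc} applies), any exceptionally generated $\mathcal W\supseteq\mathcal W'$ containing a bundle is forced to be $\vec c$-invariant, contradicting $\mathcal W\notin\mathrm{Wid}(\mathrm{coh}\text{-}\mathbb X)^{\vec c}$. This is why the paper can prove the stronger statement $\mathrm{Ord}(\mathrm{Wid})=\mathrm{Ord}(\mathrm{Exc})\cup\mathrm{Ord}(\mathrm{Wid}^{\vec c})$ --- no transitive closure, no interpolant. Your proposed interpolant, moreover, does not work as constructed: since $\mathcal W'$ is torsion it is already $\vec c$-invariant (as you yourself observe in (i)), so ``the smallest $\vec c$-invariant wide subcategory containing $\mathcal W'$ and contained in $\mathcal W$'' is $\mathcal W'$ itself, which contains no bundle and lies outside $\mathrm{Exc}$; the claim that this $\mathcal U$ is a perpendicular category sandwiched with a bundle is unjustified. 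So the missing idea is precisely Proposition~\ref{contain_non_exc}, and with it your interpolation machinery becomes unnecessary rather than repaired.
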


For a weighted projective line $\mathbb X$ of any type, we have the following classification theorem for $\vec c$-invariant wide subcategories. And by the theorem, the poset structure of $\mathrm{Wid}(\mathrm{coh} \text{-} \mathbb X)^{\vec c}$ can be described.

\begin{thm}
   (See Theorem \ref{c_inv}) Let $\mathcal W$ be a $\vec c$-invariant wide subcategory of $\mathrm{coh} \text{-} \mathbb X$. Then $\mathcal W$ is a perpendicular category of a torsion exceptional sequence. 
\end{thm}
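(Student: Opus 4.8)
The plan is to reduce to the case singled out in the abstract, where $\mathcal W$ contains a vector bundle, and to manufacture the torsion exceptional sequence out of the simple torsion sheaves that are \emph{missing} from $\mathcal W$. Throughout I would use two standing facts about $\mathrm{coh}\text{-}\mathbb X$: first, every finite-length (torsion) sheaf $S$ is fixed by the canonical twist, $S(\vec c)\cong S$, so that $\vec c$-invariance imposes no condition whatsoever on the torsion part of $\mathcal W$ and the entire content of the hypothesis lies in the behaviour of $\mathcal W$ on vector bundles; second, the Geigle--Lenzing perpendicular calculus, by which the perpendicular $\mathcal E^\perp$ of an exceptional sequence is a wide subcategory which is again the coherent-sheaf category of a weighted projective line obtained from $\mathbb X$ by reducing the weights at the points supporting $\mathcal E$. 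Accordingly I split into the case $\mathcal W\subseteq\mathrm{coh}_0\text{-}\mathbb X$, where $\mathrm{coh}_0\text{-}\mathbb X$ denotes the torsion sheaves, treated at the end, and the main case in which $\mathcal W$ contains a vector bundle.

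The key lemma I would establish is that \emph{every simple object of the abelian category $\mathcal W$ is a torsion sheaf}. Suppose $V\in\mathcal W$ were simple in $\mathcal W$ and a vector bundle. Since $\mathcal W$ is $\vec c$-invariant, the twist $(\vec c)$ restricts to an autoequivalence of $\mathcal W$, so each $V(n\vec c)$ is again a simple object of $\mathcal W$. For $n\gg 0$ a global section of $\mathcal O(n\vec c)$ induces a nonzero sheaf morphism $V\to V(n\vec c)$, which is a morphism inside $\mathcal W$; as a nonzero morphism between simple objects it must be an isomorphism, forcing $V\cong V(n\vec c)$. This is impossible, since $\deg V(n\vec c)=\deg V+n\,\mathrm{rk}(V)\deg(\vec c)\neq\deg V$ for $n\neq 0$. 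Hence no simple object of $\mathcal W$ is a bundle, and all simples of $\mathcal W$ are finite-length. This is exactly the structural input needed to recognise $\mathcal W$ as a sheaf category.

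With the key lemma in hand I would identify $\mathcal W$ with $\mathrm{coh}\text{-}\mathbb X'$ for a weighted projective line $\mathbb X'$: its simple objects, all torsion, assemble into tubes, the bundle it contains supplies a non-finite-length object, and the induced Serre duality provides precisely the data recognising $\mathcal W$ as a coherent-sheaf category. Concretely, at each exceptional point $x_i$ of $\mathbb X$ the simple sheaves of $\mathcal W$ lying in the tube $\mathcal T_{x_i}$ form a proper sub-configuration, and I take $\mathcal E$ to consist of the \emph{remaining} simples of the $\mathcal T_{x_i}$, over all $i$, ordered into an exceptional sequence of torsion sheaves; being proper sub-configurations of their tubes, these are bricks without self-extensions, so $\mathcal E$ is a torsion exceptional sequence. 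By Geigle--Lenzing $\mathcal E^\perp\simeq\mathrm{coh}\text{-}\mathbb X'$ for the weight-reduced $\mathbb X'$, matching the line produced by the recognition step, and the inclusion $\mathcal W\subseteq\mathcal E^\perp$ holds because, by construction, the simples and bundles of $\mathcal W$ are Hom/Ext-perpendicular to each removed simple $E\in\mathcal E$ (a tube-local computation via Serre duality).

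The main obstacle is the reverse inclusion $\mathcal E^\perp\subseteq\mathcal W$, that is, proving $\mathcal W$ is \emph{saturated} and does not sit properly inside the sheaf category $\mathcal E^\perp$. Here the full force of $\vec c$-invariance together with the presence of a vector bundle is essential: these guarantee that $\mathcal W$ contains line bundles in every residue class at the exceptional points and of every degree, which is exactly the family of line bundles of $\mathrm{coh}\text{-}\mathbb X'$ surviving into $\mathcal E^\perp$; a generation argument, that this family together with the torsion simples of $\mathcal W$ generates all of $\mathrm{coh}\text{-}\mathbb X'$, then yields equality. I expect this saturation step to be the delicate point, since it is where one rules out the ``preprojective-type'' wide subcategories that also contain bundles but fail $\vec c$-invariance. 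Finally, when $\mathcal W$ contains no vector bundle it decomposes over tubes, $\mathcal W=\coprod_x\mathcal W_x$, and is governed by the classification of wide subcategories of the tubes together with their supports, generalising the specialization-closed-subset description of $\mathrm{coh}\text{-}\mathbb P^1$ recalled in the introduction; the proper sub-tube parts are again perpendiculars of torsion exceptional sequences, so that the genuinely new content is the vector-bundle case.
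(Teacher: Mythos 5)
Your key lemma --- every simple object of $\mathcal W$ is a torsion sheaf --- is correct, and the twist-plus-Schur argument for it is fine, but the proof built on it has two genuine gaps. First, the construction of the exceptional sequence is wrong: you take $\mathcal E$ to consist of the simple sheaves \emph{missing} from $\mathcal W$, but a wide subcategory of a tube is not determined by the simples it contains. For instance, by Lemma~\ref{U0_perp}(2) the category $\langle S^{[p_i]}\rangle=\mathrm{add}\{S^{[lp_i]} \mid l\in\mathbb N\}$ is a possible value of $\mathcal W\cap\mathrm{tor}_{\lambda_i}\text{-}\mathbb X$ and contains \emph{no} simple sheaf at all; your recipe would then put all $p_i$ simples of that tube into $\mathcal E$, which (i) is never an exceptional sequence, because Serre duality gives $\mathrm{Ext}^1_{\mathbb X}(S,\tau S)\cong D\mathrm{Hom}_{\mathbb X}(\tau S,\tau S)\neq 0$, so the $\tau$-orbit of simples closes a cycle, and (ii) has perpendicular meeting the tube in $0$ rather than in $\langle S^{[p_i]}\rangle$. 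The paper instead first proves, via the twist sequence $0\to E\to E(\vec c)\to\oplus_j U_j\to 0$ of Lemma~\ref{bundle_tor}, that $\mathcal W\cap\mathrm{tor}_{\lambda_i}\text{-}\mathbb X$ contains an indecomposable of full length $p_i$ and hence lies in $\mathrm{NExc}$ of the tube (Proposition~\ref{uniserial}(3)), and then invokes the bijection $\mathrm{Exc}(\mathcal U_n)\leftrightarrow\mathrm{NExc}(\mathcal U_n)$ of Proposition~\ref{thick_uniserial} to produce $E^i$; its entries are exceptional torsion sheaves of length possibly bigger than one, generally not simple.

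Second, the reverse inclusion $(E^1,\dots,E^n)^\perp\subseteq\mathcal W$ --- which you yourself flag as the delicate point --- is not proved but only hoped for, and the mechanism you propose fails: $\vec c$-invariance gives $F(n\vec c)\in\mathcal W$ for the one bundle $F$ you have, but nothing forces \emph{line} bundles into $\mathcal W$ when $\mathrm{rk}(F)\geq 2$, since kernels, cokernels and extensions need not produce rank-one subquotients; the claimed family of line bundles ``in every residue class and of every degree'' is therefore unavailable. The paper's proof of Theorem~\ref{c_inv} avoids both the recognition of $\mathcal W$ as $\mathrm{coh}\text{-}\mathbb X'$ and any line-bundle generation: it computes the \emph{left} perpendicular, showing that every ordinary-point simple $S_\lambda$ lies in $\mathcal W$ (as the cokernel of $F\xrightarrow{u_\lambda}F(\vec c)$ up to multiplicity), so that $\prescript{\perp}{}{\mathcal W}$ contains no bundle and no ordinary-point torsion, and that $\prescript{\perp}{}{\mathcal W}\cap\mathrm{tor}_{\lambda_i}\text{-}\mathbb X\subseteq\langle E^i\rangle$ via the complete Ext-orthogonal pairs of Proposition~\ref{perp_cal}. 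Your recognition step moreover quietly assumes that $\mathcal W$ inherits Serre duality, which is essentially equivalent to what is to be established. Finally, your closing claim that torsion-only $\vec c$-invariant wide subcategories are ``again perpendiculars of torsion exceptional sequences'' is false: such a perpendicular always contains a vector bundle --- exactly the reason the union in Corollary~\ref{cor_c_inv} is disjoint --- and indeed the theorem proper carries the standing hypothesis that $\mathcal W$ contains a vector bundle.
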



In Section 2 and 3, we arrange some known properties of hereditary categories, which are mainly about tilting objects, exceptional sequences and perpendicular categories. 

In Section 4, we recall some basic properties of tubes, which forms the subcategory of torsion sheaves of a weighted projective line. We refer to \cite{dichev2009thick} for further information. And we refer to \cite{krause2021category} for the lattice structure of wide subcategories of a tube: it is isomorphic to the lattice of non-crossing partitions of type $B$. 

In Section 5, we recall weighted projective lines and category of coherent sheaves. 

In Section 6, we introduce $\vec c$-invariant wide subcategories of a weighted projective line and give the classification theorem of them. 

In Section 7, we classify wide subcategories of a domestic weighted projective line, and give our main theorem.

In this paper, all categories are supposed to be essentially small, i.e. the isomorphism classes of objects form a set.

\section{Hereditary categories and wide subcategories}

Let $\mathcal A$ be an abelian category, and $\mathcal W$ be a full subcategory of $\mathcal A$. We call $\mathcal W$ a \emph{wide} subcategory if it is closed under taking kernels, cokernels, and extensions.

An abelian category $\mathcal H$ is called \emph{hereditary}, if $\mathrm{Ext}_{\mathcal H}^2(X,Y)=0$ for any $X,Y \in \mathcal H$. For a hereditary abelian category $\mathcal H$, we denote by $\mathcal D^b(\mathcal H)$ the bounded derived category of $\mathcal H$. A triangulated subcategory $\mathcal D$ of $\mathcal D^b(\mathcal H)$ is called \emph{thick} if it is closed under taking direct summands.

\begin{prop} [{\cite[Theorem~5.1]{bruning2007thick}}] \label{bruning}
Let $\mathcal H$ be a hereditary abelian category. Then the following statements hold.\\
(1) For any $X \in \mathcal D^b(\mathcal H)$, we have a non-canonical isomorphism
\[
X \cong \bigoplus_{n \in \mathbb Z}H^nX[-n].
\] \\
(2) We have an one-to-one correspondence between wide subcategories of $\mathcal H$ and thick subcategories of $\mathcal D^b(\mathcal H)$, by assignments
\[
\mathcal W \mapsto \{X \in \mathcal D^b(\mathcal H) \; | \; H^n(X) \in \mathcal W, \ \forall n \in \mathbb Z \} \ and \  \mathcal T \mapsto \{ H^0(X) | X \in \mathcal T \}.
\]
\end{prop}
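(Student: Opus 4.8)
The plan is to treat part (1) as the technical heart of the statement and then derive part (2) by essentially formal manipulations of long exact cohomology sequences, using (1) to control direct summands. The only input beyond the axioms of a triangulated category is the hereditary hypothesis $\mathrm{Ext}^2_{\mathcal H}=0$, which together with the vanishing of negative $\mathrm{Ext}$ forces
\[
\mathrm{Hom}_{\mathcal D^b(\mathcal H)}(A, B[k]) \cong \mathrm{Ext}^k_{\mathcal H}(A,B) = 0
\]
for all $A, B \in \mathcal H$ and all $k \geq 2$. This single vanishing is what makes every complex split into its cohomology.

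For part (1) I would argue by induction on the number of degrees $n$ with $H^n(X)\neq 0$. If the cohomology is concentrated in a single degree $b$, then $X\cong H^b(X)[-b]$ and there is nothing to prove. Otherwise let $b$ be the top degree with $H^b(X)\neq 0$ and use the canonical truncation triangle
\[
\tau_{<b}X \to X \to H^b(X)[-b] \xrightarrow{\ \delta\ } (\tau_{<b}X)[1].
\]
By the inductive hypothesis $\tau_{<b}X \cong \bigoplus_{m<b} H^m(X)[-m]$, so the connecting morphism lies in
\[
\mathrm{Hom}_{\mathcal D^b(\mathcal H)}\!\left(H^b(X)[-b],\ \bigoplus_{m<b} H^m(X)[-m+1]\right) \cong \bigoplus_{m<b}\mathrm{Ext}^{\,b-m+1}_{\mathcal H}\bigl(H^b(X), H^m(X)\bigr).
\]
Since $b-m+1\geq 2$ for every $m<b$, each summand vanishes by the hereditary hypothesis, so $\delta=0$ and the triangle splits, giving $X\cong \tau_{<b}X \oplus H^b(X)[-b]$. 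Iterating yields the decomposition; the non-canonicity merely reflects that the splitting of the triangle $\delta=0$ involves a choice.

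For part (2) I would first check that both assignments land in the correct posets. Given a wide $\mathcal W$, closure of $\mathcal T(\mathcal W)=\{X : H^n(X)\in\mathcal W\ \forall n\}$ under shifts is immediate; closure under cones follows from the long exact cohomology sequence by writing $H^n(Z)$ as an extension $0\to \mathrm{coker}(H^n(X)\to H^n(Y)) \to H^n(Z)\to \ker(H^{n+1}(X)\to H^{n+1}(Y))\to 0$ and invoking that $\mathcal W$ is closed under kernels, cokernels and extensions; closure under summands holds because $H^n$ of a summand is a summand of $H^n(X)\in\mathcal W$. Conversely, for a thick $\mathcal T$, part (1) and thickness identify $\mathcal W(\mathcal T)=\{H^0(X):X\in\mathcal T\}$ with $\mathcal T\cap\mathcal H$; it is wide because the cone of a morphism $A\to B$ in $\mathcal H$ decomposes by (1) as $\ker[1]\oplus\mathrm{coker}$, both of which lie in $\mathcal T$ by thickness, and because a short exact sequence in $\mathcal H$ yields a triangle so that extensions remain in $\mathcal T$.

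Finally I would verify the two composites are identities. That $\mathcal W(\mathcal T(\mathcal W))=\mathcal W$ is immediate via $A\mapsto A[0]$. For $\mathcal T(\mathcal W(\mathcal T))=\mathcal T$, the inclusion $\mathcal T\subseteq\mathcal T(\mathcal W(\mathcal T))$ uses part (1) plus thickness to see $H^n(X)\in\mathcal T\cap\mathcal H$ for $X\in\mathcal T$, while the reverse inclusion again invokes part (1) to write any $X$ with all $H^n(X)\in\mathcal W(\mathcal T)\subseteq\mathcal T$ as the finite sum $\bigoplus_n H^n(X)[-n]$ of objects of $\mathcal T$. The main obstacle throughout is part (1): once the splitting $X\cong\bigoplus_n H^n(X)[-n]$ is available, every remaining step is routine homological bookkeeping, and the only genuinely non-formal point is the vanishing of the connecting morphism $\delta$, which is exactly where the hereditary hypothesis is indispensable.
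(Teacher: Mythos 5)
Your proposal is correct, and in fact the paper offers no proof of this proposition at all: it is quoted directly from the literature as \cite[Theorem~5.1]{bruning2007thick}, and your argument --- the induction via the truncation triangle whose connecting morphism lands in $\bigoplus_{m<b}\mathrm{Ext}^{\,b-m+1}_{\mathcal H}(H^b(X),H^m(X))=0$, followed by the identification $\{H^0(X):X\in\mathcal T\}=\mathcal T\cap\mathcal H$ and the check of the two composites --- is exactly the standard proof given in that cited source. The only step you leave implicit is that a wide subcategory is itself closed under direct summands (needed when you say $H^n$ of a summand of $X$ lies in $\mathcal W$), which follows from closure under kernels applied to the idempotent projections, so nothing essential is missing.
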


Let $\mathcal H$ be a hereditary abelian category, and $\mathcal X \subseteq \mathcal H$ be a class of objects. We denote by $\langle \mathcal X \rangle$ the smallest wide subcategory of $\mathcal H$ containing $\mathcal X$, which is called the wide subcategory \emph{generated} by $\mathcal X$. For a triangulated category $\mathcal T$ and $\mathcal X \subseteq \mathcal T$, we still use the notation $\langle \mathcal X \rangle$ to present the smallest thick subcategory of $\mathcal T$ containing $\mathcal X$.

\begin{defn} \label{exceptional}
Let $\mathcal A$ be an abelian $k$-category and let $E \in \mathcal A$. \\
(1) We call $E$ a \emph{rigid} object if $\mathrm{Ext}_{\mathcal A}^p(E,E)=0$ for any $p \geq 1$. \\
(2) We call $E$ an \emph{exceptional} object if $E$ is rigid and $\mathrm{End}_{\mathcal A}(E)$ is a division algebra. \\
(3) A sequence of objects $(E_1,E_2, \cdots ,E_n)$ is called an \emph{exceptional sequence} if each $E_i$ is exceptional and $\mathrm{Ext}_{\mathcal A}^p(E_i,E_j)=0$ for any $i>j$ and $p \geq 0$.
\end{defn}

An abelian $k$-category $\mathcal A$ is called \emph{Hom-finite}, if for any $X,Y \in \mathcal A$, we have $\mathrm{dim}_k \mathrm{Hom}_{\mathcal A}(X,Y)< \infty$. $\mathcal A$ is called \emph{Ext-finite} if for any $X,Y \in \mathcal A$ and $i \geq 0$, we have $\mathrm{dim}_k \mathrm{Ext}_{\mathcal A}^i(X,Y)< \infty$.

Let $\mathcal A$ be an Ext-finite abelian $k$-category. For $A \in \mathcal A$, we denote by $\mathrm{add}(A)$ the full subcategory formed by direct summands of finite direct sums of $A$. A short exact sequence
\[
\epsilon :0 \longrightarrow B \longrightarrow E \longrightarrow A' \to 0
\]
in $\mathcal A$ is called a \emph{right semi-universal extension of $B$ by $A$} if $A' \in \mathrm{add}(A)$ and the connecting map
\[
\delta_{\epsilon}: \mathrm{Hom}_{\mathcal A}(A,A') \longrightarrow \mathrm{Ext}_{\mathcal A}^1(A,B)
\]
is surjective. We refer to \cite[Proposition~2.3]{chen2013universal} for the well-known existence of a semi-universal extension.

\begin{lem} \label{rigid_uni}
Let $\mathcal A$ be an Ext-finite abelian category and let $A \in \mathcal A$ be a rigid object. Assume that $0 \to B \to E \to A' \to 0$ is a right semi-universal extension of $B$ by $A$. Then $\mathrm{Ext}_{\mathcal A}^1(A,E)=0$.
\end{lem}

\begin{proof}
By definition of semi-universal extension, we have exact sequence
\[
\mathrm{Hom}_{\mathcal A}(A,A') \twoheadrightarrow \mathrm{Ext}_{\mathcal A}^1(A,B) \to \mathrm{Ext}_{\mathcal A}^1(A,E) \to \mathrm{Ext}_{\mathcal A}^1(A,A')=0.
\]
So $\mathrm{Ext}_{\mathcal A}^1(A,E)=0$.
\end{proof}

The following result is an analogue of Bongartz's completion \cite[Lemma~2.1]{Bongartz1981} with a similar proof.

\begin{prop} \label{bongartz}
Let $\mathcal H$ be an Ext-finite hereditary abelian category and let $A, B \in \mathcal H$ be rigid objects such that $\mathrm{Ext}_{\mathcal H}^1(A,B)=0$. Then there exists a rigid object $A_1 \oplus B$ with
\[
\langle A_1 \oplus B \rangle = \langle A , B \rangle
\]
\end{prop}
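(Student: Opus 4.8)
The plan is to produce $A_1$ as the middle term of a right semi-universal extension, chosen so that Lemma~\ref{rigid_uni} directly kills the one $\mathrm{Ext}$-group that obstructs rigidity of $A_1 \oplus B$. Concretely, since $\mathcal H$ is Ext-finite, I would invoke the cited existence result to form a right semi-universal extension of $A$ by $B$, namely a short exact sequence
\[
0 \longrightarrow A \longrightarrow A_1 \longrightarrow B' \longrightarrow 0
\]
with $B' \in \mathrm{add}(B)$ and $\mathrm{Hom}_{\mathcal H}(B, B') \twoheadrightarrow \mathrm{Ext}_{\mathcal H}^1(B, A)$ surjective. Applying Lemma~\ref{rigid_uni} with the rigid object $B$ then yields $\mathrm{Ext}_{\mathcal H}^1(B, A_1) = 0$, which is precisely the mixed Ext-vanishing from $B$ into the new summand.

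Next I would verify that $A_1 \oplus B$ is rigid, i.e. that the three remaining $\mathrm{Ext}^1$-groups vanish (recall $\mathrm{Ext}^1(B,B)=0$ by hypothesis and, by hereditariness, all higher Ext automatically vanish). Since $B' \in \mathrm{add}(B)$, the rigidity of $B$ and the hypothesis $\mathrm{Ext}^1(A,B)=0$ give $\mathrm{Ext}^1(B', B) = 0$ and $\mathrm{Ext}^1(A, B') = 0$. Feeding the short exact sequence into $\mathrm{Hom}(-, B)$ yields $\mathrm{Ext}^1(A_1, B) = 0$, squeezed between $\mathrm{Ext}^1(B',B)=0$ and $\mathrm{Ext}^1(A,B)=0$. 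For $\mathrm{Ext}^1(A_1, A_1)$ I would apply $\mathrm{Hom}(-, A_1)$ and $\mathrm{Hom}(A,-)$ to the sequence: the factor $\mathrm{Ext}^1(B', A_1)$ vanishes because $\mathrm{Ext}^1(B, A_1)=0$ from the previous step, and $\mathrm{Ext}^1(A, A_1)$ is squeezed between $\mathrm{Ext}^1(A,A)=0$ and $\mathrm{Ext}^1(A, B')=0$. Each of these is a routine long-exact-sequence chase.

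Finally I would establish $\langle A_1 \oplus B \rangle = \langle A, B\rangle$. For the inclusion $\langle A_1 \oplus B \rangle \subseteq \langle A, B\rangle$, note that $A, B' \in \langle A, B\rangle$ (wide subcategories are closed under direct summands, hence contain $B' \in \mathrm{add}(B)$), so the middle term $A_1$ lies in $\langle A, B\rangle$ by closure under extensions. For the reverse inclusion, the subcategory $\langle A_1 \oplus B\rangle$ contains $B'$ and $A_1$, hence contains $A = \ker(A_1 \to B')$ by closure under kernels, as well as $B$ itself.

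The construction is the only genuine decision: choosing to universally extend $A$ by $B$ (rather than the reverse) is what makes Lemma~\ref{rigid_uni} applicable and simultaneously keeps $\mathrm{Ext}^1(A_1, B)=0$ intact, since the cokernel of the extension stays inside $\mathrm{add}(B)$. I expect no serious obstacle beyond bookkeeping the four Ext-vanishings; the hereditary hypothesis is essential because it removes all $\mathrm{Ext}^{\geq 2}$ terms from the long exact sequences, so that ``rigid'' reduces to a single $\mathrm{Ext}^1$ condition in each case.
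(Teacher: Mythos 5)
Your proposal is correct and follows essentially the same route as the paper's proof: the same right semi-universal extension $0 \to A \to A_1 \to B' \to 0$, the same application of Lemma~\ref{rigid_uni} to get $\mathrm{Ext}^1_{\mathcal H}(B,A_1)=0$, and the same three long-exact-sequence chases for $\mathrm{Ext}^1(A_1,B)$, $\mathrm{Ext}^1(A,A_1)$ and $\mathrm{Ext}^1(A_1,A_1)$. Your only addition is spelling out both inclusions of $\langle A_1 \oplus B\rangle = \langle A,B\rangle$, which the paper dismisses as clear.
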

\begin{proof}
Choose a right semi-universal extension of $A$ by $B$
\[
\epsilon: 0 \longrightarrow A \longrightarrow A_1 \longrightarrow B' \to 0.
\]
By Lemma \ref{rigid_uni}, $\mathrm{Ext}_{\mathcal H}^1(B,A_1)=0$. Applying $\mathrm{Hom}_{\mathcal H}(-,B)$ to $\epsilon$ and using the fact that $\mathrm{Ext}_{\mathcal H}^1(A,B)=0$ and $B' \in \mathrm{add}(B)$, we obtain $\mathrm{Ext}_{\mathcal H}^1(A_1,B)=0$. Applying $\mathrm{Hom}_{\mathcal H}(A,-)$ to $\epsilon$,  we obtain $\mathrm{Ext}_{\mathcal H}^1(A,A_1)=0$. Applying $\mathrm{Hom}_{\mathcal H}(-,A_1)$ to $\epsilon$, we get $\mathrm{Ext}_{\mathcal H}^1(A_1,A_1)=0$. So the object $A_1 \oplus B$ is rigid. Clearly, we have $\langle A_1 \oplus B \rangle=\langle A , B \rangle$.
\end{proof}

The following well-known lemma is analogues to \cite[Lemma 4.1]{1982Tilted}.

\begin{lem}  \label{happel_lemma}
Let $\mathcal H$ be a Hom-finite hereditary abelian category and let $A, B \in \mathcal H$ be indecomposable objects with $\mathrm{Ext}_{\mathcal H}^1(B,A)=0$. Then each non-zero morphism $f: A \to B$ is either a monomorphism or an epimorphism.
\end{lem}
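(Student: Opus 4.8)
\emph{Proof proposal.} After possibly replacing the hypothesis by the conclusion symmetrically, I would argue as follows: assume $f\colon A\to B$ is nonzero and \emph{not} a monomorphism, and aim to show it is an epimorphism. Write $K=\ker f$, $I=\operatorname{im} f$, $C=\operatorname{coker} f$, and let $\iota\colon K\hookrightarrow A$ be the inclusion; our assumption means $K\neq 0$, i.e. $\iota\neq 0$, and we must deduce $C=0$. The plan is to pass to $\mathcal D^b(\mathcal H)$ and exploit that $\mathcal H$ is hereditary. Consider the triangle $A\xrightarrow{f} B\to Z\xrightarrow{w} A[1]$ with $Z=\operatorname{cone}(f)$. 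Since $H^{-1}(Z)=K$ and $H^0(Z)=C$, Proposition~\ref{bruning}(1) gives a (non-canonical) isomorphism $Z\cong K[1]\oplus C$, so the connecting morphism decomposes as $w=(w_1,w_2)$ with $w_1\colon K[1]\to A[1]$ and $w_2\colon C\to A[1]$, i.e. $w_2\in\operatorname{Ext}^1_{\mathcal H}(C,A)$. Inspecting the long exact cohomology sequence of the triangle identifies $w_1$ with $\iota[1]$, the shift of the kernel inclusion.

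Next I would bring in the hypothesis $\operatorname{Ext}^1_{\mathcal H}(B,A)=\operatorname{Hom}_{\mathcal D}(B,A[1])=0$. Applying $\operatorname{Hom}_{\mathcal D}(-,A)$ to the triangle yields an exact piece
\[
\operatorname{End}(A)\xrightarrow{\ \partial\ }\operatorname{Hom}_{\mathcal D}(Z,A[1])\longrightarrow \operatorname{Hom}_{\mathcal D}(B,A[1])=0,
\]
so $\partial$ is surjective. Under the identification $\operatorname{Hom}_{\mathcal D}(Z,A[1])\cong\operatorname{Hom}_{\mathcal H}(K,A)\oplus\operatorname{Ext}^1_{\mathcal H}(C,A)$, the connecting map is $\partial(\phi)=(\phi\iota,\ \phi_\ast w_2)$. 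Since $\mathcal H$ is Hom-finite and $A$ is indecomposable, $\operatorname{End}(A)$ is a finite-dimensional local $k$-algebra with nilpotent radical, so Fitting's lemma applies. Choosing by surjectivity a $\phi\in\operatorname{End}(A)$ with $\partial(\phi)=(\iota,0)$, we get $\phi\iota=\iota$ and $\phi_\ast w_2=0$. As $\iota\neq 0$, $\phi$ cannot be nilpotent (else $\iota=\phi^N\iota=0$), hence $\phi$ is an isomorphism; then $\phi_\ast$ is an automorphism of $\operatorname{Ext}^1_{\mathcal H}(C,A)$, forcing $w_2=0$.

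Finally, once $w_2=0$, the map $w$ factors through the summand $K[1]$, so $B\cong\operatorname{fib}(w)\cong\operatorname{fib}(w_1)\oplus C$; since $\iota$ is a monomorphism with cokernel $I$ one computes $\operatorname{fib}(w_1)=\operatorname{fib}(\iota[1])\cong I$, whence $B\cong I\oplus C$ in $\mathcal H$. Because $B$ is indecomposable and $I\neq 0$ (as $f\neq 0$), this forces $C=0$, i.e. $f$ is an epimorphism, completing the argument.

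The step I expect to be the main obstacle is precisely the joint use of surjectivity of $\partial$ and Fitting's lemma to kill the component $w_2$. It is tempting to try a purely abelian argument: applying $\operatorname{Hom}(-,A)$ to $0\to I\to B\to C\to 0$ and using heredity gives $\operatorname{Ext}^1_{\mathcal H}(I,A)=0$, and one hopes this splits $0\to K\to A\to I\to 0$. This route fails, because vanishing of $\operatorname{Ext}^1(I,A)$ controls the pushout of that sequence along $\iota$ but not the extension class itself, so neither short exact sequence need split in isolation. The derived formulation makes the real mechanism transparent: the two possible failures (nonzero kernel and nonzero cokernel) appear as the two components $w_1,w_2$ of a single connecting morphism, and the hypothesis together with local endomorphism rings shows they cannot both be nonzero.
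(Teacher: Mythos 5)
Your proof is correct, but it takes a genuinely different route from the paper's, which never leaves the abelian category. The paper exploits right exactness of $\mathrm{Ext}^1_{\mathcal H}(\mathrm{coker}(f),-)$ (heredity) to lift the class of $0 \to \mathrm{im}(f) \to B \to \mathrm{coker}(f) \to 0$ along the surjection $\mathrm{Ext}^1_{\mathcal H}(\mathrm{coker}(f),A) \twoheadrightarrow \mathrm{Ext}^1_{\mathcal H}(\mathrm{coker}(f),\mathrm{im}(f))$, producing a ladder of extensions and hence the Mayer--Vietoris sequence $0 \to A \to E \oplus \mathrm{im}(f) \to B \to 0$; this splits because $\mathrm{Ext}^1_{\mathcal H}(B,A)=0$, giving $hg+qp=1_A$, and locality of $\mathrm{End}(A)$ forces $hg$ or $qp$ to be a unit, which yields the epi and mono cases symmetrically in one stroke (indecomposability of $B$ enters only to upgrade the split mono $\mathrm{im}(f) \hookrightarrow B$ to an isomorphism). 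Your derived-category argument deploys the same three hypotheses in parallel: formality of the cone (Proposition~\ref{bruning}(1)) replaces the lifting step, surjectivity of $\partial$ replaces the splitting, and locality of $\mathrm{End}(A)$ enters through the Fitting-type argument killing $w_2$. In fact your $w_2 \in \mathrm{Ext}^1_{\mathcal H}(C,A)$ is precisely the lifted class defining the paper's $E$ (its pushforward $p_*w_2$ is, up to sign, the class of $B$ over $C$), so the two proofs manipulate the same data; what yours buys is transparency---both potential failures sit as the two components of the single connecting morphism $w=(\iota[1],w_2)$---at the cost of an asymmetric formulation (assume not mono, prove epi) and derived machinery. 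One correction to your closing remark: the claim that a purely abelian argument fails overshoots. The naive route you rightly dismiss (splitting $0 \to K \to A \to \mathrm{im}(f) \to 0$ from $\mathrm{Ext}^1_{\mathcal H}(\mathrm{im}(f),A)=0$, which would really require $\mathrm{Ext}^1_{\mathcal H}(\mathrm{im}(f),K)=0$) is not the only abelian option; the paper's proof succeeds abelianly exactly by lifting the extension class rather than splitting either short exact sequence in isolation. A cosmetic slip: you say you apply $\mathrm{Hom}_{\mathcal D}(-,A)$ but then display the degree-one piece; the segment you use comes from $\mathrm{Hom}_{\mathcal D}(-,A[1])$, i.e.\ the same long exact sequence one degree up.
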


\begin{proof}
Since $\mathcal H$ is hereditary, $\mathrm{Ext}_{\mathcal H}^1(\mathrm{coker}(f),-)$ is right exact. The epimorphism $A \twoheadrightarrow \mathrm{im}(f)$ yeilds a surjective map 
\[
   \mathrm{Ext}_{\mathcal H}^1(\mathrm{coker}(f),A) \twoheadrightarrow \mathrm{Ext}_{\mathcal H}^1(\mathrm{coker}(f),\mathrm{im}(f)).
\] 
It gives rise to the following commutative diagram with exact rows.
\[
\xymatrix{
0 \ar[r] & A \ar@{.>}^{g}[r] \ar@{>>}^{p}[d] & E \ar@{.>}[r] \ar@{.>}[d] & \mathrm{coker}(f) \ar@{=}[d] \ar[r]& 0 \\
0 \ar[r] & \mathrm{im}(f) \ar^{i}[r] & B \ar[r] & \mathrm{coker}(f) \ar[r] & 0
}
\]
So we have a split short exact sequence $0 \to A \to E \oplus \mathrm{im}(f) \to B \to 0$, i.e. there exist $h:E \to A$, $q: \mathrm{im}(f) \to A$, such that $hg+qp=1_A$. Since $\mathcal H$ is Hom-finite, and $A$ is indecomposable, we have $\mathrm{End}(A)$ is a local ring. So $hg$ or $qp$ is a unit in $\mathrm{End}(A)$. That is, $g$ or $p$ is a split monomorphism. If $g$ is split mono, observing the map $\mathrm{Ext}_{\mathcal H}^1(\mathrm{coker}(f),p)$, we get $i$ is split mono. Since $B$ is indecomposable, so $i$ is an isomorphism, that is, $f$ is an epimorphism. If $p$ is split mono, then $f=ip$ is a monomorphism.
\end{proof}

\begin{cor} \label{indec_exc}
Let $\mathcal H$ be a Hom-finite hereditary abelian category. Then any indecomposable rigid object $E$ is exceptional.
\end{cor}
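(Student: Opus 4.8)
The plan is to prove that the endomorphism ring $R = \mathrm{End}_{\mathcal H}(E)$ is a division algebra, since combined with the rigidity hypothesis this is exactly the definition of an exceptional object. Because $\mathcal H$ is Hom-finite, $R$ is a finite-dimensional $k$-algebra, and because $E$ is indecomposable, $R$ is local; write $J = \mathrm{rad}(R)$ for its Jacobson radical. The radical of a finite-dimensional algebra is nilpotent, and local means $J$ is precisely the set of non-invertible endomorphisms. Thus $R$ is a division algebra if and only if $J = 0$, and this is what I would aim to establish.

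The strategy is to argue by contradiction. Suppose $J \neq 0$ and choose some $0 \neq f \in J$, that is, a nonzero endomorphism of $E$ that is not an isomorphism. Since $E$ is rigid we have $\mathrm{Ext}^1_{\mathcal H}(E,E) = 0$, so Lemma \ref{happel_lemma}, applied with $A = B = E$, forces $f$ to be either a monomorphism or an epimorphism.

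I would then invoke the nilpotency of $J$ to eliminate both alternatives. If $f$ is a monomorphism, then every power $f^n$ is again a monomorphism and hence nonzero (as $E \neq 0$); but $f \in J$ nilpotent gives $f^n = 0$ for $n \gg 0$, a contradiction. The case where $f$ is an epimorphism is entirely symmetric, using that a composite of epimorphisms is again an epimorphism. Hence no nonzero element of $J$ exists, so $J = 0$ and $R$ is a division algebra.

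The only delicate point is the interplay between the two structural facts about $R$: that it is local with nilpotent radical, coming from Hom-finiteness and indecomposability, and that each of its nonzero elements is a mono or an epi, coming from Lemma \ref{happel_lemma} together with rigidity. Neither statement alone is enough; the contradiction arises precisely from the incompatibility of nilpotency with being a monomorphic or epimorphic self-map of a nonzero object.
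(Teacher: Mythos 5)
Your proof is correct and takes essentially the same approach as the paper: both apply Lemma~\ref{happel_lemma} with $A=B=E$ (rigidity supplying $\mathrm{Ext}^1_{\mathcal H}(E,E)=0$) to conclude that every nonzero endomorphism of $E$ is a monomorphism or an epimorphism, and then use Hom-finiteness to force such an endomorphism to be invertible. The only difference is cosmetic: the paper finishes with the one-line observation that a monic or epic endomorphism in a Hom-finite category is an isomorphism (left or right composition is an injective endomorphism of the finite-dimensional space $\mathrm{End}_{\mathcal H}(E)$, hence bijective), whereas you route the same finiteness through the locality of $\mathrm{End}_{\mathcal H}(E)$ and the nilpotency of its radical --- a valid, if slightly longer, way to close.
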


\begin{proof}
That is to prove $\mathrm{End}_{\mathcal H}(E)$ is a division algebra. By Lemma~\ref{happel_lemma}, any non-zero $f: E \to E$ is mono or epic. So $f$ is an isomorphism since $\mathcal H$ is Hom-finite.
\end{proof}

The following proposition seems to be known to experts. When $\mathcal H$ is a module category, the ``if" part can be deduced from \cite[Proposition~6.6]{krause2012report}, and the ``only if" part is an analogue of \cite[Corollary~4.2]{1982Tilted}.

\begin{prop} \label{exc_rigid}
Let $\mathcal H$ be an Ext-finite hereditary abelian category. Then a wide subcategory $\mathcal T$ of $\mathcal H$ can be generated by a rigid object if and only if it can be generated by an exceptional sequence.
\end{prop}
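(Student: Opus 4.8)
The plan is to prove the two implications separately, using the Bongartz-type completion of Proposition~\ref{bongartz} for the ``if'' part and Corollary~\ref{indec_exc} together with Lemma~\ref{happel_lemma} for the ``only if'' part.

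\medskip
\noindent\textbf{From an exceptional sequence to a rigid object.} Suppose $\mathcal T = \langle E_1, \dots, E_n\rangle$ for an exceptional sequence $(E_1, \dots, E_n)$. The only features of the sequence I need are that each $E_i$ is rigid and that $\mathrm{Ext}^1_{\mathcal H}(E_i, E_j) = 0$ whenever $i > j$; the $\mathrm{Hom}$-vanishing and the higher $\mathrm{Ext}$-vanishing play no role. I would therefore prove, by induction on $n$, the more flexible statement that any family $R_1, \dots, R_n$ of rigid objects with $\mathrm{Ext}^1(R_i, R_j) = 0$ for $i > j$ satisfies $\langle R_1, \dots, R_n\rangle = \langle R\rangle$ for a single rigid object $R$. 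For the inductive step I apply Proposition~\ref{bongartz} to the pair $(R_n, R_{n-1})$, which is legitimate since $\mathrm{Ext}^1(R_n, R_{n-1}) = 0$, obtaining a rigid object $F = A_1 \oplus R_{n-1}$ with $\langle F\rangle = \langle R_{n-1}, R_n\rangle$, sitting in a short exact sequence $0 \to R_n \to A_1 \to R_{n-1}' \to 0$ with $R_{n-1}' \in \mathrm{add}(R_{n-1})$. Applying $\mathrm{Hom}(-, R_j)$ to this sequence for $j \le n-2$, and using that $\mathcal H$ is hereditary to kill the $\mathrm{Ext}^2$ term, I get $\mathrm{Ext}^1(F, R_j) = 0$; hence $(R_1, \dots, R_{n-2}, F)$ again satisfies the hypotheses and has one fewer term, so the induction closes and $\langle R_1, \dots, R_n\rangle$ is generated by a single rigid object.

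\medskip
\noindent\textbf{From a rigid object to an exceptional sequence.} Now suppose $\mathcal T = \langle E\rangle$ with $E$ rigid. Since $\mathcal H$ is Ext-finite it is in particular Hom-finite, hence Krull--Schmidt, so I may write $E = \bigoplus_{i=1}^m E_i^{a_i}$ with the $E_i$ pairwise non-isomorphic indecomposables; each $E_i$ is a direct summand of a rigid object, hence rigid, hence exceptional by Corollary~\ref{indec_exc}, and $\langle E\rangle = \langle E_1, \dots, E_m\rangle$ because wide subcategories are closed under direct summands. As $E$ is rigid we have $\mathrm{Ext}^1(E_i, E_j) = 0$ for all $i,j$, and heredity gives $\mathrm{Ext}^{\ge 2} = 0$, so the \emph{only} obstruction to $(E_{\sigma(1)}, \dots, E_{\sigma(m)})$ being an exceptional sequence, for a permutation $\sigma$, is the condition $\mathrm{Hom}(E_{\sigma(i)}, E_{\sigma(j)}) = 0$ for $i > j$. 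Such a $\sigma$ exists exactly when the directed graph on $\{E_1, \dots, E_m\}$ with an arrow $E_i \to E_j$ whenever $\mathrm{Hom}(E_i, E_j) \neq 0$ (for $i \neq j$) is acyclic, in which case any topological ordering works. The task thus reduces to proving this acyclicity.

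\medskip
\noindent\textbf{Acyclicity and the main obstacle.} Because all mutual $\mathrm{Ext}^1$ vanish, Lemma~\ref{happel_lemma} applies to every pair and shows that each nonzero morphism between distinct summands is either a proper monomorphism or a proper epimorphism (it is not an isomorphism, since the summands are pairwise non-isomorphic and a map that is both mono and epi would be invertible). A cycle built entirely of monomorphisms composes to a monomorphism $E_{i_1}\to E_{i_1}$, which in the local ring $\mathrm{End}(E_{i_1})$ is either a unit or nilpotent; a nilpotent monomorphism of a nonzero object is impossible, so it is a unit, the first arrow is a split monomorphism, and two summands coincide --- absurd, and the all-epimorphism case is dual. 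In a mixed cycle of length at least $3$, both symbols occur, so reading around the cycle there is a consecutive pair $E_{i_a} \xrightarrow{g} E_{i_{a+1}} \xrightarrow{f} E_{i_{a+2}}$ with $g$ a proper epimorphism and $f$ a proper monomorphism; then $fg$ is nonzero with $\ker(fg) = \ker g \neq 0$ and $\mathrm{im}(fg) = \mathrm{im} f \subsetneq E_{i_{a+2}}$, hence neither mono nor epi, while $E_{i_a} \not\cong E_{i_{a+2}}$ (the summands are distinct and $a \not\equiv a+2$ modulo the length), contradicting Lemma~\ref{happel_lemma}. The delicate case, which I expect to be the main obstacle, is a $2$-cycle: a proper monomorphism $f: X \hookrightarrow Y$ together with a proper epimorphism $g: Y \twoheadrightarrow X$, where every composition lands in an endomorphism ring and yields no immediate contradiction. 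Here I would exploit rigidity more forcefully: with $C = \mathrm{coker} f$ and $K = \ker g$ one has $[C] = [K]$ in the Grothendieck group, and applying $\mathrm{Hom}(X,-)$ to $0 \to K \to Y \to X \to 0$ and $\mathrm{Hom}(-,X)$ to $0 \to X \to Y \to C \to 0$ shows $\mathrm{Ext}^1(X, K) \neq 0$ and $\mathrm{Ext}^1(C, X) \neq 0$ (since $f$ and $g$ do not split). The aim is to propagate one of these non-vanishing groups into a self-extension of $X$ or $Y$, contradicting rigidity; this is where the finite-dimensionality of the $\mathrm{Hom}$-spaces substitutes for the finite length used in the classical analogue \cite[Corollary~4.2]{1982Tilted}, and closing this last case rigorously is the crux of the argument.
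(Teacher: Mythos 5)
Your ``if'' direction is correct and is essentially the paper's own argument (iterated Bongartz completion via Proposition~\ref{bongartz}), and your reduction of the ``only if'' direction to acyclicity of the Hom-digraph, with the all-mono, all-epi, and length-$\geq 3$ mixed-cycle cases, likewise matches the paper's induction. But the $2$-cycle case, which you explicitly leave open (``closing this last case rigorously is the crux of the argument''), is a genuine gap: you never derive a contradiction from a proper monomorphism $f\colon X \hookrightarrow Y$ together with a proper epimorphism $g\colon Y \twoheadrightarrow X$, and the route you sketch --- propagating $\mathrm{Ext}^1_{\mathcal H}(X,K) \neq 0$ or $\mathrm{Ext}^1_{\mathcal H}(C,X) \neq 0$ into a self-extension --- is not carried out, and as stated it stalls: $K=\ker g$ lies in $\langle E \rangle$ (wide subcategories are closed under kernels) but not in $\mathrm{add}(E)$, so $\mathrm{Ext}^1_{\mathcal H}(X,K)\neq 0$ by itself contradicts nothing, since rigidity of a generator says nothing about extensions between arbitrary objects of the subcategory it generates.

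The case in fact closes in one line, by the very lemma you already used: Lemma~\ref{happel_lemma} nowhere requires the two indecomposables to be distinct or non-isomorphic, only that $\mathrm{Ext}^1_{\mathcal H}(B,A)=0$, and rigidity of the generator gives in particular $\mathrm{Ext}^1_{\mathcal H}(Y,Y)=0$. The composite $fg\colon Y \to Y$ (an epimorphism followed by a monomorphism) is nonzero, hence by Lemma~\ref{happel_lemma} it is mono or epi. If mono, then $g$ is mono, hence mono and epi, hence an isomorphism; if epi, then $f$ is epi, hence an isomorphism --- either way contradicting properness (equivalently, $X \not\cong Y$). This is exactly how the paper argues: in its cycle argument it picks consecutive maps $f_{j-1}$ epi and $f_j$ mono and applies the lemma to the nonzero composite $f_j f_{j-1}$, which for a $2$-cycle is an endomorphism. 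Your assertion that compositions landing ``in an endomorphism ring'' yield ``no immediate contradiction'' is a misreading of the lemma's scope, and it is the only thing separating your proposal from a complete proof.
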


\begin{proof}
$\Rightarrow$: Assume that $\mathcal T=\langle X \rangle$ with $X$ rigid, and that $X= \oplus_{i=1}^n E_i$ such that $E_1, \cdots ,E_n$ are pairwise nonisomorphic indecomposable objects. We claim that $E_1, \cdots ,E_n$ can be reformed to an exceptional sequence. By induction, it suffices to prove that there exists $i \in \{1,2,\cdots,n \}$, such that $\mathrm{Hom}_{\mathcal H}(E_i,E_j)=0$ for any $j \neq i$.

If not, then there exists a chain of non-zero homomorphisms
\[
E_{i_1} \xrightarrow{f_1} E_{i_2} \xrightarrow{f_2} \cdots \to E_{i_r} \xrightarrow{f_r} E_{i_1}
\]
 with $i_1, \cdots , i_r$ pairwise distinct. By Lemma~\ref{happel_lemma}, each $f_i$ is either mono or epic. If all $f_i$ are mono, then the composition $f_r f_{r-1}\cdots f_1$ is mono. Since $\mathcal H$ is Hom-finite, $f_r f_{r-1}\cdots f_1$ is an isomorphism. So each $f_i$ is an isomorphism, a contradiction. Similarly, it is impossible that all $f_i$ are epimorphisms. So there exists $j\in \{1,2,\cdots,r \}$ such that $f_j$ is mono and $f_{j-1}$ is epic; here, we set $f_0=f_r$. It follows that $f_j f_{j-1} \neq 0$, so it has to be mono or epic by Lemma~\ref{happel_lemma}. If it is mono, then $f_{j-1}$ is an isomorphism, a contradiction; if it is epic, then $f_j$ is an isomorphism, also a contradiction.

$\Leftarrow$: Assume $\mathcal T= \langle E_1, E_2, \cdots ,E_n \rangle$, where $(E_1, E_2, \cdots ,E_n)$ is an exceptional sequence. By Proposition~\ref{bongartz}, there exists a rigid object $E_2'$ such that $E_1 \oplus E_2'$ is rigid and $\langle E_1 \oplus E_2' \rangle = \langle E_1, E_2\rangle$. Applying Proposition~\ref{bongartz} repeatedly, we obtain a rigid object $E_1 \oplus E_2' \oplus \cdots \oplus E_n'$ such that $\langle E_1 \oplus E_2' \oplus \cdots \oplus E_n'\rangle = \langle E_1, E_2, \cdots , E_n\rangle$.
\end{proof}

Let $\mathcal H$ be an Ext-finite hereditary abelian category. We denote by $K_0(\mathcal H)$ the \emph{Grothendieck group} of $\mathcal H$, and denote by $[X]$ the corresponding element of an object $X$. The \emph{Euler form} associated to $\mathcal H$ is by definition the bilinear form $K_0(\mathcal H) \times K_0(\mathcal H) \to \mathbb Z$ given by
\[
   \langle [X],[Y] \rangle:=\mathrm{dim}_k \mathrm{Hom}_{\mathcal H}(X,Y)-\mathrm{dim}_k \mathrm{Ext}^1_{\mathcal H}(X,Y).
\]

The following result is well known.

\begin{lem} \label{gro_independence}
   Let $(E_1, E_2, \cdots, E_n)$ be an exceptional sequence in $\mathcal H$. Then $\{ [E_1], [E_2], \cdots, [E_n] \}$ are linearly independent in $K_0(\mathcal H)$.
\end{lem}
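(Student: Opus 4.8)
The plan is to exhibit the classes $[E_1],\dots,[E_n]$ as linearly independent by pairing them against one another through the Euler form, whose associated Gram matrix will turn out to be triangular and nonsingular. The hereditary hypothesis is what makes this strategy available: because $\mathrm{Ext}^{\geq 2}_{\mathcal H}$ vanishes, the alternating sum $\dim_k \mathrm{Hom} - \dim_k \mathrm{Ext}^1$ is additive on short exact sequences, so the Euler form is a well-defined bilinear form on $K_0(\mathcal H)$ as set up above.

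First I would record the numerical content of the exceptional sequence axioms. Set $M_{ij} := \langle [E_i],[E_j]\rangle$. For $i>j$ the defining condition $\mathrm{Ext}^p_{\mathcal H}(E_i,E_j)=0$ for all $p\geq 0$ gives in particular $\mathrm{Hom}_{\mathcal H}(E_i,E_j)=0$ and $\mathrm{Ext}^1_{\mathcal H}(E_i,E_j)=0$, hence $M_{ij}=0$. On the diagonal, rigidity of $E_i$ forces $\mathrm{Ext}^1_{\mathcal H}(E_i,E_i)=0$, while $\mathrm{End}_{\mathcal H}(E_i)$ being a division algebra forces $\dim_k\mathrm{End}_{\mathcal H}(E_i)\geq 1$; thus $M_{ii}=\dim_k\mathrm{End}_{\mathcal H}(E_i)>0$. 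Therefore $M=(M_{ij})$ is upper triangular with strictly positive diagonal entries, and in particular $\det M = \prod_i M_{ii}\neq 0$.

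Next I would deduce independence by forward substitution. Suppose $\sum_{i=1}^n a_i[E_i]=0$ with $a_i\in\mathbb Z$. Pairing on the right with $[E_j]$ and using bilinearity gives $\sum_{i=1}^n a_i M_{ij}=0$ for every $j$. For $j=1$ only the $i=1$ term survives (since $M_{i1}=0$ for $i>1$), so $a_1 M_{11}=0$ and hence $a_1=0$; assuming $a_1=\dots=a_{j-1}=0$, the equation for index $j$ collapses to $a_j M_{jj}=0$, giving $a_j=0$. By induction all $a_i$ vanish, which is exactly the asserted linear independence.

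I do not expect any serious obstacle here: the one point that genuinely uses the structure of the problem is the triangular shape of $M$, which is immediate from the exceptional sequence axioms, and the only hypothesis that needs to be invoked with care is hereditariness, used solely to guarantee that the Euler form is well defined on the Grothendieck group. The remainder is elementary linear algebra over $\mathbb Z$.
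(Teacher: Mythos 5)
Your proof is correct and follows the same route as the paper: both rest on observing that the Gram matrix $\left(\langle [E_i],[E_j]\rangle\right)$ is triangular with strictly positive diagonal entries, from which independence follows. The paper states these two observations and says ``it follows directly,'' while you supply the forward-substitution details explicitly; there is no substantive difference.
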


\begin{proof}
   We observe that $\langle [E_i], [E_j] \rangle=0$ when $i>j$, and $\langle [E_i], [E_i] \rangle>0$. Then it follows directly.
\end{proof}

\section{Perpendicular calculus}
Let $\mathcal H$ be a hereditary abelian category, and let $\mathcal X \subseteq \mathcal H$ be a class of objects. The right and left \emph{perpendicular categories} of $\mathcal X$ are defined to be the full subcategories
\[
\mathcal X^{\perp} :=\{A \in \mathcal H \ | \ \mathrm{Hom}_{\mathcal H}(X,A)=0=\mathrm{Ext}_{\mathcal H}^1(X,A), \ \forall X \in \mathcal X \}
\]
and
\[
\prescript{\perp}{}{\mathcal X} :=\{A \in \mathcal H \ | \ \mathrm{Hom}_{\mathcal H}(A,X)=0=\mathrm{Ext}_{\mathcal H}^1(A,X), \ \forall X \in \mathcal X \},
\]
respectively. It is well known that both $\mathcal X^{\perp}$ and $\prescript{\perp}{}{\mathcal X}$ are wide subcategories; see \cite[Proposition~1.1]{Werner1991Perpendicular}. And then we have $\mathcal X^{\perp}=\langle \mathcal X \rangle^{\perp}$ and $\prescript{\perp}{}{\mathcal X}=\prescript{\perp}{}{\langle \mathcal X \rangle}$. Following \cite[Definition~2.1]{krause2010telescope}, we call a pair $(\mathcal X,\mathcal Y)$ of full subcategories \emph{Ext-orthogonal} if $\mathcal X^{\perp}=\mathcal Y$ and $\mathcal X=\prescript{\perp}{}{\mathcal Y}$. We call the pair \emph{complete} if each object $M$ in $\mathcal H$ fits into a five-term exact squence
\[
0 \to Y^1 \to X_0 \to M \to Y^0 \to X_1 \to 0
\]
with $X_0,X_1 \in \mathcal X$ and $Y^0,Y^1 \in \mathcal Y$. In this case, we have $\mathcal H=\langle \mathcal X, \mathcal Y \rangle$.

For a wide subcategory $\mathcal W$ of $\mathcal H$ and a class of objects $\mathcal X$ in $\mathcal W$, we use the notation $\mathcal X^{\perp_{\mathcal W}}:=\mathcal X^{\perp} \cap \mathcal W$. 

\begin{lem} \label{complete_ext_orth_pair}
   (1) Let $\mathcal W$ be a wide subcategory. If each object $M$ in $\mathcal H$ fits into a five-term exact squence
   \[
   0 \to Y^1 \to X_0 \to M \to Y^0 \to X_1 \to 0
   \]
   with $X_0,X_1 \in \mathcal W$ and $Y^0,Y^1 \in \mathcal W^\perp$. Then $(\mathcal W, \mathcal W^\perp)$ is a complete Ext-orthogonal pair.

   (2) Let $(\mathcal X, \mathcal Y)$ be a complete Ext-orthogonal pair in $\mathcal H$, and $\mathcal W$ be a wide subcategory containing $\mathcal X$. Then $(\mathcal X, \mathcal W \cap \mathcal Y)$ is a complete Ext-orthogonal pair in $\mathcal W$. 
\end{lem}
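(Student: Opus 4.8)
The plan is to treat both statements with one mechanism: decompose the given five-term exact sequence into three short exact sequences, and then use the perpendicularity hypotheses to collapse it to a \emph{split} short exact sequence whose terms already lie in the target subcategory. Throughout I will use that a wide subcategory — and in particular any perpendicular category $\mathcal X^{\perp}$ or $\prescript{\perp}{}{\mathcal X}$ — is closed under direct summands, since a summand of $Z$ is the kernel of $1-e$ for an idempotent $e$ on $Z$, and also that $\mathcal X \cap \mathcal X^{\perp}=0$ and $\prescript{\perp}{}{\mathcal Y} \cap \mathcal Y = 0$ (an object perpendicular to itself is zero).

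For (1), the equation $\mathcal W^{\perp}=\mathcal W^{\perp}$ is automatic and completeness is the hypothesis, so Ext-orthogonality of $(\mathcal W,\mathcal W^{\perp})$ reduces to $\mathcal W=\prescript{\perp}{}{(\mathcal W^{\perp})}$. The inclusion $\mathcal W \subseteq \prescript{\perp}{}{(\mathcal W^{\perp})}$ is immediate from the definition of $\mathcal W^{\perp}$. For the converse, take $M \in \prescript{\perp}{}{(\mathcal W^{\perp})}$ and write the hypothesised sequence $0 \to Y^1 \to X_0 \xrightarrow{a} M \xrightarrow{b} Y^0 \to X_1 \to 0$ with $X_i \in \mathcal W$ and $Y^j \in \mathcal W^{\perp}$. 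Since $\mathrm{Hom}_{\mathcal H}(M,Y^0)=0$, the map $b$ vanishes, so $a$ is an epimorphism and $Y^0 \cong X_1$; as this object lies in $\mathcal W \cap \mathcal W^{\perp}=0$, both $Y^0$ and $X_1$ are zero. What remains is the short exact sequence $0 \to Y^1 \to X_0 \to M \to 0$, which splits because $\mathrm{Ext}^1_{\mathcal H}(M,Y^1)=0$. Hence $M$ is a direct summand of $X_0 \in \mathcal W$, and so $M \in \mathcal W$.

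For (2), the equation $\mathcal X^{\perp_{\mathcal W}}=\mathcal W \cap \mathcal Y$ is immediate from $\mathcal X^{\perp}=\mathcal Y$. The substance is the perpendicular equation $\mathcal X=\prescript{\perp}{}{(\mathcal W \cap \mathcal Y)} \cap \mathcal W$ together with completeness in $\mathcal W$. Given $M \in \mathcal W$, I will first show that in its five-term sequence one automatically has $Y^0,Y^1 \in \mathcal W$: decomposing into $0 \to Y^1 \to X_0 \to I \to 0$, $0 \to I \to M \to C \to 0$, $0 \to C \to Y^0 \to X_1 \to 0$ with $I=\mathrm{im}(a)$ and $C=\mathrm{coker}(a)$, closure of $\mathcal W$ under images and cokernels gives $I,C \in \mathcal W$, whence $Y^1$ (a kernel of a map in $\mathcal W$) and $Y^0$ (an extension of $X_1$ by $C$, both in $\mathcal W$) lie in $\mathcal W$. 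This already verifies completeness of $(\mathcal X,\mathcal W \cap \mathcal Y)$ in $\mathcal W$. For the perpendicular equation, $\mathcal X \subseteq \prescript{\perp}{}{(\mathcal W \cap \mathcal Y)} \cap \mathcal W$ is clear from $\mathcal X=\prescript{\perp}{}{\mathcal Y}$ and $\mathcal X \subseteq \mathcal W$; conversely, an $M$ in the right-hand side now satisfies $\mathrm{Hom}_{\mathcal H}(M,Y^0)=0=\mathrm{Ext}^1_{\mathcal H}(M,Y^1)$ precisely because $Y^0,Y^1 \in \mathcal W \cap \mathcal Y$, so the identical collapse-then-split argument of (1) exhibits $M$ as a summand of $X_0 \in \mathcal X$, giving $M \in \mathcal X$ since $\mathcal X=\prescript{\perp}{}{\mathcal Y}$ is wide.

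I expect the main obstacle to be the bookkeeping in (2) that forces $Y^0$ and $Y^1$ into $\mathcal W$: this is exactly what allows the orthogonal pair to restrict correctly, and it is the one place where all three closure properties of a wide subcategory — kernels, cokernels, and extensions — are invoked simultaneously. The collapse-then-split step, by contrast, is uniform across both parts and depends only on the two vanishing conditions together with closure under direct summands.
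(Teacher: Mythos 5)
Your proof is correct and follows essentially the same route as the paper: part (1) truncates the five-term sequence using $\mathrm{Hom}_{\mathcal H}(M,Y^0)=0$ and then splits $0 \to Y^1 \to X_0 \to M \to 0$ via $\mathrm{Ext}^1_{\mathcal H}(M,Y^1)=0$ to exhibit $M$ as a summand of $X_0$, and part (2) forces $Y^0,Y^1 \in \mathcal W$ from the closure properties of a wide subcategory and then reduces to (1) inside $\mathcal W$. Your only departures are cosmetic: you make explicit the three-short-exact-sequence bookkeeping and the vanishing $Y^0 \cong X_1 = 0$ that the paper leaves implicit, and you re-run the collapse-then-split argument in (2) where the paper simply cites part (1).
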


\begin{proof}
   (1) We only need to prove that $\mathcal W=\prescript{\perp}{}{(\mathcal W^{\perp})}$. Obviously $\mathcal W \subseteq \prescript{\perp}{}{(\mathcal W^{\perp})}$. It remains to prove $\prescript{\perp}{}{(\mathcal W^{\perp})} \subseteq \mathcal W$. Take $M \in \prescript{\perp}{}{(\mathcal W^{\perp})}$. Then $M$ fits into the following 5-term exact sequence
   \[
   0 \to Y^1 \to X_0 \to M \to Y^0 \to X_1 \to 0
   \]
   with $X_0,X_1 \in \mathcal W$ and $Y^0,Y^1 \in \mathcal W^{\perp}$. Since $\mathrm{Hom}_{\mathcal H}(M,Y^0)=0$, we have the following exact sequence
   \[
   0 \to Y^1 \to X_0 \to M \to 0.
   \]
   Since $\mathrm{Ext}_{\mathcal H}^1(M,Y^1)=0$, $M$ is isomorphic to a direct summand of $X_0$. So $M$ belongs to $\mathcal W$.

   (2) For any $M \in \mathcal W$, it fits into a five-term exact sequence
   \[
   0 \to Y^1 \to X_0 \to M \to Y^0 \to X_1 \to 0
   \]
   with $X_0,X_1 \in \mathcal X$ and $Y^0,Y^1 \in \mathcal Y$.
   Since $M, X_0, X_1 \in \mathcal W$, we have that $Y^0, Y^1 \in \mathcal W \cap \mathcal Y$. Obviously $\mathcal W \cap \mathcal Y=\mathcal X^{\perp_{\mathcal W}}$. So we have done by (1).
\end{proof}

Let $\mathcal A$ be an abelian category and let $X, Y \in \mathcal A$. A \emph{right $\mathrm{add}(X)$-approximation of $Y$} is a morphism $f: X' \to Y$ with $X' \in \mathrm{add}(X)$, such that $\mathrm{Hom}_{\mathcal A}(X,f): \mathrm{Hom}_{\mathcal A}(X,X') \to \mathrm{Hom}_{\mathcal A}(X,Y)$ is surjective. Such an approximation always exists if $\mathcal A$ is Hom-finite over a field.

\begin{lem} \label{appr}
Let $\mathcal A$ be an abelian category and let $X \in \mathcal A$ be a rigid object. Assume that
\[
0 \to K \to X_E \xrightarrow{f} E \to Y \to 0
\]
is an exact sequence with $f$ a right $\mathrm{add}(X)$-approximation of $E$. Then we have $\mathrm{Ext}_{\mathcal A}^1(X,K)=0$. Moreover, if $\mathcal A$ is hereditary, then $\mathrm{Hom}_{\mathcal A}(X,Y)=0$.
\end{lem}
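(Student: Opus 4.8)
The plan is to factor the given four-term exact sequence through the image of $f$. Write $I := \mathrm{im}(f)$, so that $f$ decomposes as $X_E \xrightarrow{p} I \xrightarrow{\iota} E$ with $p$ an epimorphism and $\iota$ a monomorphism. This splits the four-term sequence into two short exact sequences
\[
0 \to K \to X_E \xrightarrow{p} I \to 0 \quad\text{and}\quad 0 \to I \xrightarrow{\iota} E \xrightarrow{q} Y \to 0,
\]
where $q$ is the cokernel map, and I would deduce the two assertions from these respectively by applying $\mathrm{Hom}_{\mathcal A}(X,-)$ and chasing long exact sequences.

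For the first assertion I would first upgrade the approximation hypothesis to a statement about $I$: given any $g\colon X \to I$, the composite $\iota g\colon X \to E$ factors as $fh = \iota p h$ for some $h\colon X \to X_E$ by the approximation property, and cancelling the monomorphism $\iota$ gives $ph = g$. Hence $\mathrm{Hom}_{\mathcal A}(X,p)$ is surjective. Feeding this, together with the rigidity vanishing $\mathrm{Ext}^1_{\mathcal A}(X,X_E)=0$ (which holds since $X_E \in \mathrm{add}(X)$), into the long exact sequence of the first short exact sequence forces $\mathrm{Ext}^1_{\mathcal A}(X,K)=0$, because the connecting map out of $\mathrm{Hom}_{\mathcal A}(X,I)$ vanishes and $\mathrm{Ext}^1_{\mathcal A}(X,K)$ is then trapped inside $\mathrm{Ext}^1_{\mathcal A}(X,X_E)=0$.

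For the second assertion I would exploit that $qf = 0$. Since $f$ is a right $\mathrm{add}(X)$-approximation, every morphism $X \to E$ factors through $f$ and is therefore annihilated by $q$; that is, $\mathrm{Hom}_{\mathcal A}(X,q)=0$. The long exact sequence of the second short exact sequence then embeds $\mathrm{Hom}_{\mathcal A}(X,Y)$ into $\mathrm{Ext}^1_{\mathcal A}(X,I)$. It remains to show $\mathrm{Ext}^1_{\mathcal A}(X,I)=0$, which I would get from the first short exact sequence: applying $\mathrm{Hom}_{\mathcal A}(X,-)$ sandwiches $\mathrm{Ext}^1_{\mathcal A}(X,I)$ between $\mathrm{Ext}^1_{\mathcal A}(X,X_E)=0$ and the hereditary vanishing $\mathrm{Ext}^2_{\mathcal A}(X,K)=0$.

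The argument is essentially two long-exact-sequence chases, so I do not expect a deep obstacle; the only points needing care are the two small upgrades of the approximation hypothesis, namely the surjectivity of $\mathrm{Hom}_{\mathcal A}(X,p)$ (where the monomorphism $\iota$ must be cancelled) for the first part, and the vanishing of $\mathrm{Hom}_{\mathcal A}(X,q)$ (using that all maps $X \to E$ factor through $f$) for the second. The hereditary hypothesis enters only at the final step, to kill $\mathrm{Ext}^2_{\mathcal A}(X,K)$, which explains why it is required solely for the second conclusion.
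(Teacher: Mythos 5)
Your proposal is correct and follows essentially the same route as the paper's proof: factor $f$ through its image, show $\mathrm{Hom}_{\mathcal A}(X,-)$ applied to the epimorphism $X_E \twoheadrightarrow \mathrm{im}(f)$ is surjective, and chase the two long exact sequences, with the hereditary hypothesis entering only to kill $\mathrm{Ext}^1_{\mathcal A}(X,\mathrm{im}(f))$. The only (cosmetic) differences are that you invoke $\mathrm{Ext}^2_{\mathcal A}(X,K)=0$ where the paper uses the equivalent right-exactness of $\mathrm{Ext}^1_{\mathcal A}(X,-)$, and you deduce $\mathrm{Hom}_{\mathcal A}(X,q)=0$ directly from $qf=0$ where the paper instead uses the bijectivity of $\mathrm{Hom}_{\mathcal A}(X,\mathrm{im}(f)) \to \mathrm{Hom}_{\mathcal A}(X,E)$.
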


\begin{proof}
Let
\[
\xymatrix{ X_E \ar@{->>}^{i}[r] & \mathrm{Im} f \ar@{^(->}^{c}[r] & E
}
\]
be an epi-mono factorization of $f$. Then $\mathrm{Hom}_{\mathcal A}(X,f)$ has the following factorization.
\[
\mathrm{Hom}_{\mathcal A}(X, X_E) \xrightarrow{\mathrm{Hom}_{\mathcal A}(X,i)} \mathrm{Hom}_{\mathcal A}(X,\mathrm{Im} f) \xrightarrow{\mathrm{Hom}_{\mathcal A}(X,c)} \mathrm{Hom}_{\mathcal A}(X,E).
\]
Clearly, the map $\mathrm{Hom}_{\mathcal A}(X,c)$ is injective. Since $f$ is an approximation, $\mathrm{Hom}_{\mathcal A}(X, f)$ is surjective. So $\mathrm{Hom}_{\mathcal A}(X,c)$ is bijective, and $\mathrm{Hom}_{\mathcal A}(X,i)$ is surjective.

Applying $\mathrm{Hom}_{\mathcal A}(X,-)$ to  $0 \to K \to X_E \to \mathrm{Im}f \to 0$  and using the fact that $\mathrm{Ext}_{\mathcal A}^1(X, X_E)=0$, we get $\mathrm{Ext}_{\mathcal A}^1(X,K)=0$.

If $\mathcal A$ is hereditary, then $\mathrm{Ext}_{\mathcal A}^1 (X,-)$ is right exact. By $\mathrm{Ext}_{\mathcal A}^1 (X,X)=0$, we have $\mathrm{Ext}_{\mathcal A}^1(X,\mathrm{Im}f)=0$. Applying $\mathrm{Hom}_{\mathcal A}(X,-)$ to $0 \to \mathrm{Im}f \to E \to Y \to 0$, we obtain the following exact sequence
\[
0 \to \mathrm{Hom}_{\mathcal A}(X,\mathrm{Im}f) \xrightarrow{\sim} \mathrm{Hom}_{\mathcal A}(X,E) \to \mathrm{Hom}_{\mathcal A}(X,Y) \to \mathrm{Ext}_{\mathcal A}^1(X,\mathrm{Im}f)=0.
\]
So we have $\mathrm{Hom}_{\mathcal A}(X,Y)=0$.
\end{proof}

The perpendicular calculus for the module category over a finite dimensional hereditary algebra is treated in \cite[Proposition A.1]{hubery2016categorification}. The next proposition is a natural generalization of this to a hereditary abelian category, and might be viewed as a finite version of \cite[Theorem~2.2]{krause2010telescope}. The proof given bellow is similar to the one of \cite[Proposition~3.2]{Werner1991Perpendicular}.

\begin{prop} \label{perp_cal}
Let $\mathcal H$ be an Ext-finite hereditary abelian category and let $X \in \mathcal H$ be a rigid object. Then the following statements hold. 

(1) Any $M \in \mathcal H$ fits into a 5-term exact sequence
\[
0 \to Y^1 \to X_0 \to M \to Y^0 \to X_1 \to 0
\]
with $X_0,X_1 \in \langle X \rangle$ and $Y^0,Y^1 \in X^{\perp}$. 

(2) The pair $(\langle X \rangle,X^{\perp})$ is Ext-orthogonal.
\end{prop}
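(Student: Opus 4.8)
The plan is to establish part~(1) first and then deduce part~(2) formally. Once (1) is available, every $M \in \mathcal H$ sits in a five-term exact sequence whose outer terms lie in $\langle X \rangle$ and whose inner terms lie in $X^{\perp}$; since $X^{\perp} = \langle X \rangle^{\perp}$ and $\langle X \rangle$ is a wide subcategory, Lemma~\ref{complete_ext_orth_pair}(1) applied with $\mathcal W = \langle X \rangle$ immediately yields that $(\langle X \rangle, X^{\perp})$ is a complete Ext-orthogonal pair, which is exactly (2). So the whole content is the construction of the sequence in (1), and I would build it by resolving $M$ on its two sides independently and then splicing.

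For the right-hand (cokernel) side I would start from a right $\mathrm{add}(X)$-approximation $f \colon X_0' \to M$, which exists by Hom-finiteness, and set $I = \mathrm{im} f$, $J = \mathrm{coker} f = M/I$ and $K = \ker f$. Lemma~\ref{appr} gives $\mathrm{Hom}_{\mathcal H}(X, J) = 0$ and $\mathrm{Ext}^1_{\mathcal H}(X, K) = 0$. A right semi-universal extension of $J$ by $X$,
\[
0 \to J \to Y^0 \to X_1 \to 0, \qquad X_1 \in \mathrm{add}(X),
\]
then removes the remaining $\mathrm{Ext}^1$: by Lemma~\ref{rigid_uni} one has $\mathrm{Ext}^1_{\mathcal H}(X, Y^0) = 0$, and together with $\mathrm{Hom}_{\mathcal H}(X, J) = 0$ a suitable (minimal) choice arranges $\mathrm{Hom}_{\mathcal H}(X, Y^0) = 0$, so that $Y^0 \in X^{\perp}$. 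This produces the tail $M \twoheadrightarrow J \hookrightarrow Y^0 \twoheadrightarrow X_1$ of the sequence.

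For the left-hand (subobject) side I would process the kernel $K$. Take a right $\mathrm{add}(X)$-approximation of $K$, let $T = \mathrm{tr}_X(K)$ be its image and put $Y^1 = K/T$. Then $\mathrm{Hom}_{\mathcal H}(X, Y^1) = 0$ by Lemma~\ref{appr}, while $\mathrm{Ext}^1_{\mathcal H}(X, Y^1) = 0$ because $Y^1$ is a quotient of $K$ and $\mathcal H$ is hereditary; hence $Y^1 \in X^{\perp}$. Pushing out the inclusion $K \hookrightarrow X_0'$ along $K \twoheadrightarrow Y^1$ yields a short exact sequence $0 \to Y^1 \to X_0 \to I \to 0$ in which $X_0 = X_0'/T$ lies in $\langle X \rangle$, since $T$ is the image of a morphism between objects of $\mathrm{add}(X)$ and wide subcategories are closed under images and cokernels. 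Composing $X_0 \twoheadrightarrow I \hookrightarrow M$ and splicing with the tail above gives the desired five-term sequence $0 \to Y^1 \to X_0 \to M \to Y^0 \to X_1 \to 0$.

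The main obstacle is forcing the simultaneous vanishing of both $\mathrm{Hom}_{\mathcal H}(X, -)$ and $\mathrm{Ext}^1_{\mathcal H}(X, -)$ on $Y^0$ and $Y^1$. Each elementary step delivers only one of the two: the approximation controls $\mathrm{Hom}$ on the cokernel and $\mathrm{Ext}^1$ on the kernel, while the universal extension controls $\mathrm{Ext}^1$. The complementary vanishing---$\mathrm{Hom}_{\mathcal H}(X, Y^0) = 0$ on the right and the passage from $K$ to $Y^1$ on the left---is precisely the delicate point, and it genuinely uses that $X$ is rigid (so that $\mathrm{Ext}^1_{\mathcal H}(X, \mathrm{tr}_X(-)) = 0$), not merely the approximation property: for a non-exceptional rigid $X$ the raw kernel $K$ need not already lie in $X^{\perp}$. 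An alternative that sidesteps this peeling is to invoke Proposition~\ref{exc_rigid} to replace $X$ by an exceptional sequence and to induct on its length; for a single exceptional object $E$ the ring $\mathrm{End}_{\mathcal H}(E)$ is a division algebra, the evaluation $E^{a} \to M$ can be taken bijective on $\mathrm{Hom}_{\mathcal H}(E, -)$, and both perpendicularity conditions then hold without any correction, after which transitivity of complete Ext-orthogonal pairs---the finite analogue of \cite[Theorem~2.2]{krause2010telescope}---assembles the general case.
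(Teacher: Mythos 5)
Your reduction of (2) to (1) via Lemma~\ref{complete_ext_orth_pair}(1) together with $X^{\perp}=\langle X\rangle^{\perp}$ is fine, and your treatment of the kernel side (approximating $K=\ker f$, passing to $Y^1=K/\mathrm{tr}_X(K)$, and using right exactness of $\mathrm{Ext}^1_{\mathcal H}(X,-)$) is correct. The gap is on the cokernel side, at exactly the step you flag and then wave away: no ``suitable (minimal) choice'' of semi-universal extension $0\to J\to Y^0\to X_1\to 0$ arranges $\mathrm{Hom}_{\mathcal H}(X,Y^0)=0$ in general. Applying $\mathrm{Hom}_{\mathcal H}(X,-)$ gives $\mathrm{Hom}_{\mathcal H}(X,Y^0)\cong\ker\bigl(\delta\colon \mathrm{Hom}_{\mathcal H}(X,X_1)\to\mathrm{Ext}^1_{\mathcal H}(X,J)\bigr)$, so you would need $\delta$ bijective, i.e.\ $\mathrm{Ext}^1_{\mathcal H}(X,J)$ projective as a module over $\mathrm{End}_{\mathcal H}(X)$; over a division algebra (the exceptional case) this is automatic, which is why your closing observation is right there, but for a merely rigid $X$ it fails. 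Concretely, take $\mathcal H=\mathrm{mod}\,kQ$ with $Q\colon 1\to 2$, $X=P_1\oplus S_1$ (rigid, since $\mathrm{Ext}^1(S_1,P_1)=0$) and $M=S_2$. Then $\mathrm{Hom}(X,M)=0$, so your approximation yields $I=0$ and $J=S_2$, and every extension $0\to S_2\to Y^0\to X_1\to 0$ with $X_1=P_1^b\oplus S_1^a\in\mathrm{add}(X)$ and $\delta$ surjective has $Y^0\cong P_1^{b+1}\oplus S_1^{a-1}$, whence $\mathrm{Hom}(X,Y^0)\neq 0$. Worse, the failure is not merely in the choice of extension: here $S_2=\ker(P_1\to S_1)$, so $\langle X\rangle=\mathcal H$ and $X^{\perp}=0$, yet your scheme requires a monomorphism $J=S_2\hookrightarrow Y^0$ with $Y^0\in X^{\perp}$ --- impossible. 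The image $I=\mathrm{tr}_{\mathrm{add}(X)}(M)$ of your approximation is simply too small to serve as the $\langle X\rangle$-part of $M$.

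The paper's proof avoids this by reversing the order of the two moves and applying the extension step to $M$ itself rather than to a cokernel: first a right semi-universal extension $0\to M\to E\to X^M\to 0$ kills $\mathrm{Ext}^1_{\mathcal H}(X,E)$ (Lemma~\ref{rigid_uni}); since $\mathcal H$ is hereditary, this vanishing passes to every quotient of $E$, so when one then takes a right $\mathrm{add}(X)$-approximation $X_E\to E$, its cokernel $Y^0$ satisfies $\mathrm{Hom}(X,Y^0)=0$ by Lemma~\ref{appr} and $\mathrm{Ext}^1(X,Y^0)=0$ by heredity, with no injectivity of any connecting map needed. The subobject $I\subseteq M$ is then the image of $L=\ker(X_E\to J)\to M$, which is in general strictly larger than the trace of $X$ in $M$ (in the example above it is all of $S_2$, since $L\in\langle X\rangle$ need not lie in $\mathrm{add}(X)$), and the snake-lemma bookkeeping assembles the five-term sequence. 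Your fallback --- replace $X$ by an exceptional sequence via Proposition~\ref{exc_rigid} and induct on its length --- is a genuinely viable alternative (the paper's remark after the proposition points to precisely this route, via Bondal's theorem), but as written you neither carry out the induction nor prove the transitivity statement for complete Ext-orthogonal pairs that it needs, so the proposal as it stands does not establish~(1).
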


\begin{proof}
(1) Take a right semi-universal extension of $M$ by $X$,
\[
0 \to M \to E \to X^M \to 0
\]
with $X^M \in \mathrm{add}(X)$. By Lemma \ref{rigid_uni}, $\mathrm{Ext}_{\mathcal H}^1(X,E)=0$. We take a right add($X$)-approximation $X_E \to E$ of $E$, and get an exact sequence
\[
0 \to K \to X_E \to E \to Y^0 \to 0.
\]
By Lemma \ref{appr}, we have $\mathrm{Ext}_{\mathcal H}^1(X,K)=0$ and $\mathrm{Hom}_{\mathcal H}(X,Y^0)=0$. Write $J$ for the image of the composition $X_E \to E \to X^M$, and write $L$ for its kernel. We have $L \in \langle X \rangle$ and $J \in \langle X \rangle$. Now we obtain an exact commutative diagram bellow.
\[
\xymatrix{
0 \ar[r] & L \ar[r] \ar@{.>}[d] & X_E \ar[r] \ar[d] & J \ar[r] \ar@{^(->}[d] & 0 \\
0 \ar[r] & M \ar[r] \ar@{.>}[d] & E \ar[r] \ar@{.>}[d] & X^M \ar[r] \ar@{.>}[d] & 0 \\
0 \ar[r] & N \ar@{.>}[r] \ar[d] & Y^0 \ar@{.>}[r] \ar[d] & X_1 \ar[r] \ar[d] & 0 \\
         & 0               & 0               & 0
}
\]
Here, the third row is defined to be the cokernel of the previous two rows, and it is exact by the snake lemma.
Since $\mathrm{Ext}_{\mathcal H}^1(X,E)=0$, we have $\mathrm{Ext}_{\mathcal H}^1(X,Y^0)=0$, and then $Y^0 \in X^{\perp}$. We observe that $X_1 \in \langle X \rangle$ from the rightmost column. Writing $I$ for the image of $L \to M$, we obtain an exact sequence
\begin{align} \label{exa1}
0 \to I \to M \to Y^0 \to X_1 \to 0
\end{align}
with $Y^0 \in X^{\perp}$, $X_1 \in \langle X \rangle$.

By the snake lemma, $K$ is also the kernel of $L \to M$. Take a right add($X$)-approximation $X_K \to K$ of $K$, and write $Y^1$ for its cokernel. By a similar proof to the fact that $Y^0 \in X^{\perp}$, we have $Y^1 \in X^{\perp}$. We have an exact commutative diagram
\[
\xymatrix{
0 \ar[r] & X_K \ar@{=}[r] \ar[d] & X_K \ar[r] \ar[d] & 0 \ar[r] \ar[d] & 0 \\
0 \ar[r] & K \ar[r] \ar@{.>}[d] & L \ar[r] \ar@{.>}[d] & I \ar[r] \ar@{:}[d] & 0 \\
0 \ar[r] & Y^1 \ar@{.>}[r] \ar[d] & X_0 \ar@{.>}[r] \ar[d] & I \ar[r] \ar[d] & 0 \\
 & 0 & 0 & 0 &
}
\]
Here, the third row is defined to be the cokernel of the previous two rows. By the snake lemma, $Y^1 \to X_0$ is mono. Since $X_K, L \in \langle X \rangle$, we have $X_0 \in \langle X \rangle$. Gluing the third row and (\ref{exa1}), we obtain the required 5-term exact sequence.

(2) Since a perpendicular category is wide, we have 
\[
   M \in X^\perp \Leftrightarrow X \in \prescript{\perp}{}{M} \Leftrightarrow \langle X \rangle \in \prescript{\perp}{}{M} \Leftrightarrow M \in \langle X \rangle ^\perp.
\]
So $X^\perp=\langle X \rangle ^\perp$. Then it follows from Lemma~\ref{complete_ext_orth_pair}(2).
\end{proof}

\begin{rmk}
   {\rm One can give another proof of the proposition using \cite[Theorem~3.2]{bondal1989representation} and Proposition~\ref{exc_rigid}.}
\end{rmk}

\begin{cor} \label{wide_gen_exc}
   Let $\mathcal H$ be an Ext-finite hereditary abelian category satisfying $\mathrm{rank}(K_0(\mathcal H))< \infty$, and let $\mathcal W$ be a wide subcategory of $\mathcal H$. If each indecomposable object in $\mathcal W$ is exceptional, then $\mathcal W$ can be generated by an exceptional sequence.
\end{cor}

\begin{proof}
   Take an indecomposable object $E_1 \in \mathcal W$. If $E_1^{\perp_{\mathcal W}} \neq 0$, then we can take an indecomposable object $E_2 \in E_1^{\perp_{\mathcal W}}$. Repeating this prograss, we will get an exceptional sequence $(E_1, E_2,\cdots)$ in $\mathcal W$. Since $\mathrm{rank}(K_0(\mathcal H))< \infty$, the prograss will end in finite steps by Lemma~\ref{gro_independence}. Finally we get an exceptional sequence $(E_1, E_2,\cdots, E_n)$ such that $(E_1, E_2,\cdots, E_n)^{\perp_{\mathcal W}}=0$. By Proposition~\ref{perp_cal}(1), we have $\mathcal W=\langle E_1, E_2,\cdots, E_n \rangle$.
\end{proof}

We might compare the condition of Proposition~\ref{perp_cal} and  \cite[Proposition~A.1]{hubery2016categorification} via the next proposition.

\begin{prop} \label{proj_gen}
   Let $\mathcal H$ be a Hom-finite hereditary abelian category and $P$ a projective object of $\mathcal H$. Then $\mathcal H=\langle P \rangle$ if and only if $P$ is a generator of $\mathcal H$, i.e. for any $X \in \mathcal H$, there exists $n \in \mathbb N$ and an epimorphism $P^n \twoheadrightarrow X$.
\end{prop}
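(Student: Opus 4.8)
The plan is to prove the two implications separately, using throughout that the wide subcategory $\langle P \rangle$ is closed under kernels, cokernels and extensions, and hence also under finite direct sums and under direct summands.

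For the direction ``$\Leftarrow$'', assume $P$ is a generator and take an arbitrary $X \in \mathcal H$. First I would choose an epimorphism $P^n \twoheadrightarrow X$ and let $K$ be its kernel. Since $\mathcal H$ is hereditary, a subobject of the projective object $P^n$ is again projective (from $0 \to K \to P^n \to X \to 0$ and $\mathrm{Ext}^2_{\mathcal H}=0$ one gets $\mathrm{Ext}^1_{\mathcal H}(K,-)=0$), so $K$ is projective. As $P$ is a generator there is an epimorphism $P^m \twoheadrightarrow K$, which splits by projectivity of $K$; thus $K$ is a direct summand of $P^m \in \langle P \rangle$. Since a wide subcategory is closed under direct summands (writing $e$ for the idempotent on $P^m$ projecting onto $K$, one has $K \cong \mathrm{im}(e)=\ker(1-e) \in \langle P \rangle$), we get $K \in \langle P \rangle$, and then $X = \mathrm{coker}(K \hookrightarrow P^n) \in \langle P \rangle$. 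Hence $\mathcal H = \langle P \rangle$.

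For the direction ``$\Rightarrow$'', assume $\mathcal H = \langle P \rangle$. The key observation is that $\langle P \rangle^{\perp} = P^{\perp}$, so $P^{\perp} = \mathcal H^{\perp} = 0$: indeed any nonzero $Y$ satisfies $\mathrm{Hom}_{\mathcal H}(Y,Y) \neq 0$ and hence cannot lie in $\mathcal H^{\perp}$. Because $P$ is projective, $\mathrm{Ext}^1_{\mathcal H}(P,-)=0$, so this says precisely that $\mathrm{Hom}_{\mathcal H}(P,Y)=0$ forces $Y=0$. Now for arbitrary $X$, I would take the trace $T := \mathrm{tr}_P(X)$, the image of the evaluation map $P^n \to X$ attached to a $k$-basis of the finite-dimensional space $\mathrm{Hom}_{\mathcal H}(P,X)$ (here Hom-finiteness is used). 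By construction $P^n \twoheadrightarrow T$, and every morphism $P \to X/T$ lifts along the projective $P$ to a morphism $P \to X$ whose image lies in $T$, so $\mathrm{Hom}_{\mathcal H}(P, X/T)=0$. By the previous step $X/T = 0$, whence $P^n \twoheadrightarrow X$ and $P$ is a generator.

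The routine closure properties of wide subcategories (finite direct sums, direct summands) and the fact that subobjects of projectives are projective in a hereditary category are the technical ingredients; the one genuinely load-bearing idea is the identity $P^{\perp} = \mathcal H^{\perp} = 0$ in the ``$\Rightarrow$'' direction, which converts the abstract equality $\mathcal H = \langle P \rangle$ into the concrete detection property that $\mathrm{Hom}_{\mathcal H}(P,-)=0$ implies vanishing, after which the trace construction upgrades this to genuine generation. I expect this perpendicular/trace step to be the main point, while everything else is standard homological bookkeeping.
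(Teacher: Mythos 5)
Your proof is correct, and its skeleton matches the paper's: in both arguments the crux of ``$\Rightarrow$'' is the identity $P^{\perp}=\langle P \rangle^{\perp}=\mathcal H^{\perp}=0$ combined with a map $P^n \to X$ built from a basis of the Hom-space -- your trace $\mathrm{tr}_P(X)$ is exactly the paper's right $\mathrm{add}(P)$-approximation, which exists by Hom-finiteness. The one genuine divergence is how you kill $\mathrm{Hom}_{\mathcal H}(P, X/T)$: the paper invokes its Lemma~\ref{appr}, whose Hom-vanishing conclusion uses hereditariness of $\mathcal H$ (right exactness of $\mathrm{Ext}^1_{\mathcal H}(P,-)$), whereas you lift a map $P \to X/T$ through the epimorphism $X \to X/T$ using projectivity of $P$ and observe the lift factors through $T$; this is more elementary and shows the ``$\Rightarrow$'' direction needs no hereditariness at all, only projectivity of $P$. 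Conversely, your ``$\Leftarrow$'' is slightly heavier than the paper's: the paper simply writes $X$ as the cokernel of a presentation $P^m \to P^n \to X \to 0$ (no hereditariness, no summand-closure needed), while you detour through projectivity of the kernel $K$ -- valid, since $\mathrm{Ext}^1_{\mathcal H}(K,-)=0$ follows from heredity and Yoneda-$\mathrm{Ext}$ vanishing does characterize projectivity, and your idempotent argument for summand-closure of wide subcategories is fine -- but the hypothesis is not actually needed there either. Net effect: the two proofs are interchangeable, and yours makes visible that each implication holds under weaker hypotheses than the proposition states.
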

\begin{proof}
   $\Rightarrow$: For any $X \in \mathcal H$, we take $f:P'\to M$ to be a right add($P$)-approximation of $X$. By Lemma~\ref{appr}, $\mathrm{Hom}_{\mathcal H}(P,\mathrm{coker}f)=0$. Since $P$ is projective, we have $\mathrm{Ext}^1_{\mathcal H}(P,\mathrm{coker}f)=0$. So $\mathrm{coker}f \in P^{\perp}=\mathcal H^{\perp}=0$, i.e. $f$ is epi. 

   $\Leftarrow:$ Since $P$ is a generator of $\mathcal H$, for any $X \in \mathcal H$, we have a presensentation $P^m \to P^n \to X \to 0$. So $X \in \langle P \rangle$.
\end{proof}

%

\section{Uniserial categories with Serre duality}

Let $\mathcal U$ be a \emph{length abelian category}, that is, an abelian category whose objects have finite length. Let $E$ be an object, we denoted by $l(E)$ the length of $E$. An object $U$ is called \emph{uniserial} if it has a unique composition series. If all indecomposable objects in $\mathcal U$ are uniserial, then we call $\mathcal U$ a \emph{uniserial category}.

Let $\mathcal U$ be a uniserial category and $S$ a simple object in $\mathcal U$. Up to isomorphism, we denote by $S_{[n]}$ the unique indecomposable object with socle $S$ and length $n$ (such $S_{[n]}$ may not exist). Then $S_{[n]}$ has the unique chain of all subobjects:
\[
0 \subseteq S \subseteq S_{[2]} \subseteq \cdots \subseteq S_{[n-1]} \subseteq S_{[n]}.
\]
And each indecomposable object in $\mathcal U$ is of the form $S_{[n]}$; see \cite[Subsection~1.7]{chen2009introduction}. Dually, we denote by $S^{[n]}$ the unique indecomposable object with top $S$ and length $n$.

For an object $E$ in a length category, we denote by $\mathrm{top}(E)$ and $\mathrm{soc}(E)$ the top and the socle of $E$, respectively. We denote by $\mathrm{CF}(E)$ the set of isomorphism class of the composition factors of $E$. For a subcategory $\mathcal X$, we denote by $\mathrm{CF}(\mathcal X):= \cup_{X \in \mathcal X} \mathrm{CF}(X)$. 

\begin{lem} \label{uni_hom}
   Let $\mathcal U$ be a uniserial category and $E$, $F$ indecomposable objects. Then $\mathrm{Hom}_{\mathcal U}(E,F) \neq 0$ if and only if $\mathrm{top}(E) \in \mathrm{CF}(F)$ and $\mathrm{soc}(F) \in \mathrm{CF}(E)$.
\end{lem}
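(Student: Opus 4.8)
The plan is to prove the two implications separately. Throughout I will use the basic fact that a nonzero subobject or quotient of a uniserial object is again uniserial, and more precisely that a nonzero quotient of a uniserial object shares its top while a nonzero subobject shares its socle. In particular, the image of any morphism between the uniserial objects $E$ and $F$ is uniserial, and this is what drives both directions.

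For the ``only if'' direction, suppose $f\colon E\to F$ is nonzero and set $C=\mathrm{im}(f)\neq 0$. As a quotient of the uniserial $E$ the object $C$ has $\mathrm{top}(C)=\mathrm{top}(E)$, so $\mathrm{top}(E)\in\mathrm{CF}(C)$; and since $C$ is a subobject of $F$ we have $\mathrm{CF}(C)\subseteq\mathrm{CF}(F)$, whence $\mathrm{top}(E)\in\mathrm{CF}(F)$. Dually, as a subobject of the uniserial $F$ we have $\mathrm{soc}(C)=\mathrm{soc}(F)$, so $\mathrm{soc}(F)\in\mathrm{CF}(C)\subseteq\mathrm{CF}(E)$, the last inclusion because $C$ is a quotient of $E$. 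This settles one direction with essentially no computation.

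For the converse, write $S=\mathrm{top}(E)$, $T=\mathrm{soc}(F)$, $n=l(E)$, $m=l(F)$, so that $E\cong S^{[n]}$ and $F\cong T_{[m]}$. By the (dual of the) chain of subobjects recorded before the lemma, the quotients of $E$ from the top are exactly the $S^{[l]}$ with $1\le l\le n$, and the subobjects of $F$ are exactly the $T_{[l]}$ with $1\le l\le m$; moreover the composition factor of $E$ at distance $l$ from the top is $\mathrm{soc}(S^{[l]})$, and the one of $F$ at distance $l$ from the socle is $\mathrm{top}(T_{[l]})$. I would produce a nonzero map $E\to F$ by exhibiting a single length $d\le\min(n,m)$ with $S^{[d]}\cong T_{[d]}$, since then the composite $E\twoheadrightarrow S^{[d]}\cong T_{[d]}\hookrightarrow F$ is nonzero.

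The crux, and the step I expect to be the main obstacle, is the length-matching: a common top $S$ and socle $T$ do not a priori force a common length. This is resolved by the rigidity of a uniserial category, namely the uniqueness of an indecomposable with prescribed socle (resp. top) and length. Concretely, for each $l$ one has $\mathrm{soc}(S^{[l]})=T$ if and only if $\mathrm{top}(T_{[l]})=S$, because either condition makes $S^{[l]}$ and $T_{[l]}$ the unique indecomposable of length $l$ with socle $T$ and top $S$, hence isomorphic. Granting this, the hypothesis $T\in\mathrm{CF}(E)$ yields a least $d$ with $\mathrm{soc}(S^{[d]})=T$ and $d\le n$, while $S\in\mathrm{CF}(F)$ yields a least $d'$ with $\mathrm{top}(T_{[d']})=S$ and $d'\le m$; the displayed equivalence forces $d=d'$, so $d\le\min(n,m)$ and $S^{[d]}\cong T_{[d]}$, completing the construction.
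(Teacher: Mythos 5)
Your proposal is correct and takes essentially the same approach as the paper: the ``only if'' direction is the identical image argument, and in the converse your minimal length $d$ produces exactly the paper's shortest object $I$ with $\mathrm{top}(I)\cong\mathrm{top}(E)$ and $\mathrm{soc}(I)\cong\mathrm{soc}(F)$, realized as a quotient of $E$ and a subobject of $F$ whose composite gives the nonzero map. The only difference is bookkeeping: you match the two minimal indices $d=d'$ via the uniqueness of $S^{[l]}$ and $T_{[l]}$, whereas the paper compares $l(I)$ directly with the lengths of a quotient $Q$ of $E$ and a subobject $Q'$ of $F$ -- both arguments rest on the same uniqueness conventions for uniserial objects stated before the lemma.
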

   
\begin{proof}
   If $\mathrm{Hom}_{\mathcal U}(E,F) \neq 0$, we take a nonzero morphism $f: E \to F$. We have $\mathrm{top}(\mathrm{im} f) \cong \mathrm{top}(E)$ and $\mathrm{soc}(\mathrm{im} f) \cong \mathrm{soc}(F)$. So $\mathrm{top}(E) \in \mathrm{CF}(F)$ and $\mathrm{soc}(F) \in \mathrm{CF}(E)$. 
   
   Conversely, since $\mathrm{soc}(F) \in \mathrm{CF}(E)$, there exists a quotient object $Q$ of $E$ whose socle is isomorphic to $\mathrm{soc}(F)$. So there exists a shortest object $I$ in $\mathcal U$ such that $\mathrm{top}(I)\cong \mathrm{top}(E)$ and $\mathrm{soc}(I)\cong \mathrm{soc}(F)$. So $I$ is a quotient of $Q$ since $\mathrm{top}(Q) \cong \mathrm{top}(I)$ and $l(Q) \geq l(I)$. Then $I$ is a quotient of $E$. 
   
   Since $\mathrm{top}(E) \in \mathrm{CF}(F)$, there exists a subobject $Q'$ of $F$ with $\mathrm{top}(Q') \cong \mathrm{top}(E)$. By the minimality of $l(I)$, we have $l(I) \leq l(Q') \leq l(F)$. Then $\mathrm{soc}(I) \cong \mathrm{soc}(F)$ and $l(I) \leq l(F)$ implies that $I$ is a subobject of $F$.

   Now we can get a composition of an epimorphism and a monomorphism $E \twoheadrightarrow I \hookrightarrow F$, which implies that $\mathrm{Hom}_{\mathcal U}(E,F) \neq 0$.
\end{proof}

Let $\mathcal H$ be a Hom-finite abelian category. We say that $\mathcal H$ satisfies \emph{Serre duality} if there exist an equivalence $\tau: \mathcal H \to \mathcal H$ and natural isomorphisms
\[
\mathrm{Ext}_{\mathcal H}^1(X,Y) \xrightarrow{\sim} \mathrm{D \ Hom}_{\mathcal H}(Y, \tau X)
\]
for all objects $X,Y$ from $\mathcal H$. Here, $\mathrm{D}=\mathrm{Hom}_k(-,k)$ denotes the $k$-duality. It follows from \cite[Proposition~1.4]{lenzing2006hereditary} that $\mathcal H$ is hereditary without nozero projectives or injectives. It is well known that a Hom-finite length category with Serre duality is uniserial. We refer to \cite[Theorem 1.7]{lenzing2006hereditary} and \cite[Proposition 8.3]{gabriel1973indecomposable}.

Recall that an additive category $\mathcal A$ is \emph{connected}, if any decomposition $\mathcal A=\mathcal A_1 \coprod \mathcal A_2$ into additive categories implies that $\mathcal A_1=0$ or $\mathcal A_2=0$. 

In what follows, we denote by $\mathcal U_n$ a connected uniserial hereditary abelian category with Serre duality which has exactly $n$ simples up to isomorphism. According to \cite[Subsection~1.8]{chen2009introduction}, we can assume that $\{S_1,\cdots,S_n \}$ is a complete set of representatives of the isomorphic classes of simple objects in $\mathcal U_n$, and $\tau S_i=S_{i-1}$, for any $i=1,2, \cdots n$. Here, we set $S_0:=S_n$.

\begin{lem} \label{uni_exc}
   Let $E \in \mathcal U_n$ be an indecomposable object. Then $E$ is exceptional if and only if $l(E) < n$.
\end{lem}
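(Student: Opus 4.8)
The plan is to reduce the statement to a computation of $\mathrm{Hom}(E,\tau E)$ via Serre duality, and then to evaluate this $\mathrm{Hom}$-space using the combinatorics of composition factors supplied by Lemma~\ref{uni_hom}. First I would observe that since $\mathcal U_n$ is Hom-finite and hereditary, for an \emph{indecomposable} object $E$ being exceptional is equivalent to being rigid: the forward implication is the definition, and the converse is Corollary~\ref{indec_exc}. As $\mathcal U_n$ is hereditary, rigidity of $E$ amounts to $\mathrm{Ext}^1_{\mathcal U_n}(E,E)=0$. Applying Serre duality $\mathrm{Ext}^1_{\mathcal U_n}(E,E)\cong \mathrm{D}\,\mathrm{Hom}_{\mathcal U_n}(E,\tau E)$, I reduce the whole statement to the claim
\[
\mathrm{Hom}_{\mathcal U_n}(E,\tau E)=0 \iff l(E)<n.
\]

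Next I would pin down the relevant invariants. Since $\tau$ is an equivalence of the abelian category $\mathcal U_n$, it is exact and hence carries the unique composition series of $E$ to that of $\tau E$; in particular, if $\mathrm{soc}(E)=S_j$ and $l(E)=l$, then $\tau E$ is the indecomposable of length $l$ with $\mathrm{soc}(\tau E)=\tau S_j=S_{j-1}$, and $\mathrm{CF}(\tau E)$ is obtained from $\mathrm{CF}(E)$ by the index shift $S_i\mapsto S_{i-1}$. The key structural input is the list of composition factors of the indecomposable $S_{[l]}$ with socle $S_j$: I claim they are $S_j,S_{j+1},\ldots,S_{j+l-1}$ read from socle to top (indices mod $n$), so that $\mathrm{top}(S_{[l]})=S_{j+l-1}$. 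I would prove this by induction on $l$, using that for a uniserial object $U$ of length $\ge 2$ one has $\mathrm{top}(U)=\mathrm{top}(U/\mathrm{soc}(U))$, while $U/\mathrm{soc}(U)$ is again indecomposable with socle the second composition factor of $U$; the base case $l=2$ is exactly the statement that the length-two indecomposable with socle $S_j$ has top $S_{j+1}$, which follows from the Auslander--Reiten sequence $0\to \tau S_{j+1}\to M\to S_{j+1}\to 0$ together with $\tau S_{j+1}=S_j$.

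With these descriptions in hand, I would finish by invoking Lemma~\ref{uni_hom}: $\mathrm{Hom}_{\mathcal U_n}(E,\tau E)\ne 0$ if and only if $\mathrm{top}(E)\in\mathrm{CF}(\tau E)$ and $\mathrm{soc}(\tau E)\in\mathrm{CF}(E)$. Here $\mathrm{CF}(E)=\{S_j,\ldots,S_{j+l-1}\}$ is a run of $l$ consecutive simples starting at $S_j$, so $\mathrm{soc}(\tau E)=S_{j-1}=S_{j+n-1}$ lies in $\mathrm{CF}(E)$ exactly when the run is long enough to wrap around, i.e. precisely when $l\ge n$; a symmetric count shows $\mathrm{top}(E)=S_{j+l-1}\in\mathrm{CF}(\tau E)$ under the same condition $l\ge n$. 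Hence both conditions hold simultaneously when $l\ge n$ and both fail when $l<n$, giving $\mathrm{Hom}_{\mathcal U_n}(E,\tau E)=0 \iff l<n$, which is what was needed.

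I expect the main obstacle to be the careful determination of the composition factors of $S_{[l]}$ — that is, justifying the ``consecutive run'' picture $S_j,\ldots,S_{j+l-1}$ from the single datum $\tau S_i=S_{i-1}$ — and tracking the indices modulo $n$ correctly so that the wrap-around threshold comes out to be exactly $l=n$ rather than $l=n-1$ or $l=n+1$. Everything else is a formal consequence of Serre duality and Lemma~\ref{uni_hom}.
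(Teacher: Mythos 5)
Your proposal is correct and follows essentially the same route as the paper: reduce via Corollary~\ref{indec_exc} and Serre duality to the claim $\mathrm{Hom}_{\mathcal U_n}(E,\tau E)=0 \iff l(E)<n$, then settle that claim with Lemma~\ref{uni_hom}. The only difference is that you spell out the combinatorial step the paper leaves implicit — that $\mathrm{CF}(E)$ is a consecutive run $S_j,\ldots,S_{j+l-1}$ and that the wrap-around threshold for both conditions of Lemma~\ref{uni_hom} is exactly $l\ge n$ — and your index bookkeeping there is accurate.
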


\begin{proof}
   By Corollary~\ref{indec_exc}, $E$ is exceptional if and only if $\mathrm{Ext}_{\mathcal U_n}^1(E,E)=0$, by Serre duality, if and only if $\mathrm{Hom}_{\mathcal U_n}(E, \tau E) = 0$. 
   
   By Lemma~\ref{uni_hom}, we have that $\mathrm{Hom}_{\mathcal U_n}(E, \tau E) = 0$ if and only if $l(E) < n$.
\end{proof}

\begin{lem} \label{wide_length}
   Let $\mathcal W$ be a wide subcategory of $\mathcal U_n$ and let $U \in \mathcal W$ be an indecomposable object with $l(U)>n$. Then there exists an indecomposable object $V \in \mathcal W$ such that $l(V)=n$.
\end{lem}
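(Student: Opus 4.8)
The plan is to produce the required length-$n$ indecomposable $V$ as the kernel of a carefully chosen non-invertible endomorphism of $U$. This is the natural move because $\mathcal W$, being wide, is closed under kernels of its own morphisms but need not be closed under passing to arbitrary subobjects; so I cannot simply take the length-$n$ subobject of $U$ and declare it to lie in $\mathcal W$. Realizing that subobject as a kernel of an endomorphism of $U \in \mathcal W$ is what keeps us inside $\mathcal W$.

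First I would record the composition-factor structure of $U$. Writing $S_i = \mathrm{soc}(U)$ we have $U \cong (S_i)_{[m]}$ with $m = l(U) > n$. Using Serre duality together with $\tau S_j = S_{j-1}$ — exactly the computation $\mathrm{Ext}^1_{\mathcal U_n}(S_{i+1},S_i) \cong \mathrm{D}\,\mathrm{Hom}_{\mathcal U_n}(S_i,\tau S_{i+1})$ that pins down the top of $(S_i)_{[2]}$ — one sees that the composition factors of $(S_i)_{[m]}$, read from the socle upward, are $S_i, S_{i+1}, \dots, S_{i+m-1}$ with indices modulo $n$. The key consequence is periodicity: the $(n+1)$-st composition factor from the bottom is again $S_i$.

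Next I would exploit this periodicity to match a subobject with a quotient of $U$. In the unique chain $0 \subseteq (S_i)_{[1]} \subseteq \cdots \subseteq (S_i)_{[m]} = U$ of subobjects, consider $(S_i)_{[n]}$. The quotient $U/(S_i)_{[n]}$ is uniserial of length $m-n$, and by the periodicity its socle is once more $S_i$; hence $U/(S_i)_{[n]} \cong (S_i)_{[m-n]}$, which is also (up to isomorphism) the length-$(m-n)$ subobject sitting at the bottom of $U$. This identification is available precisely because $m > n$, so that $(S_i)_{[m-n]}$ is nonzero. It lets me form the endomorphism
\[
f\colon U \twoheadrightarrow U/(S_i)_{[n]} \xrightarrow{\ \sim\ } (S_i)_{[m-n]} \hookrightarrow U,
\]
where the last arrow includes $(S_i)_{[m-n]}$ as the bottom part of $U$.

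Finally, since the isomorphism and the inclusion are monomorphisms, $\ker f = (S_i)_{[n]}$. This object has length exactly $n$, is indecomposable (any subobject of the uniserial object $U$ is uniserial, hence indecomposable), and lies in $\mathcal W$ because $f$ is an endomorphism of $U \in \mathcal W$ and $\mathcal W$ is closed under kernels. Setting $V := \ker f$ completes the argument. I expect the one genuinely load-bearing step to be the isomorphism $U/(S_i)_{[n]} \cong (S_i)_{[m-n]}$: it is the period-$n$ cycling of composition factors that forces a quotient of $U$ to reappear as a subobject, thereby guaranteeing the existence of an endomorphism of $U$ whose kernel has the prescribed length $n$. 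Everything else is routine uniserial bookkeeping.
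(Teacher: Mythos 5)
Your proposal is correct and matches the paper's proof essentially verbatim: the paper also realizes $S_{[n]}$ as the kernel of the composite endomorphism $U \twoheadrightarrow U/S_{[n]} \cong S_{[m-n]} \hookrightarrow U$ and concludes by closure of $\mathcal W$ under kernels. The only difference is cosmetic — you spell out the period-$n$ cycling of composition factors that justifies $U/S_{[n]} \cong S_{[m-n]}$, which the paper leaves implicit in writing the exact sequence $0 \to S_{[n]} \to U \to S_{[m-n]} \to 0$.
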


\begin{proof}
   Denote $S:=\mathrm{soc}(U)$ and $l(U)=m>n$. We have an exact sequence 
   \[
   0 \to S_{[n]} \to U \xrightarrow{p} S_{[m-n]} \to 0, 
   \]
   and a monomorphism $S_{[m-n]} \xrightarrow{i} U$. Now we obtain a left exact sequence 
   \[
      0 \to S_{[n]} \to U \xrightarrow{i \circ p} U.
   \]
   Since $\mathcal W$ is wide, we have $S_{[n]} \in \mathcal W$.
\end{proof}

For a hereditary abelian category $\mathcal H$, we denote by $\mathrm{Wid}(\mathcal H)$ the set of wide subcategories of $\mathcal H$, and $\mathrm{Exc}(\mathcal H)$ the set of wide subcategories which are generated by an exceptional sequence, and $\mathrm{NExc}(\mathcal H):=\mathrm{Wid}(\mathcal H) \setminus \mathrm{Exc}(\mathcal H)$.

The following proposition is essentially due to \cite[Lemma~2.3.1]{dichev2009thick}. Here, we provide a different proof.

\begin{prop} \label{uniserial}
   The following statements hold.

(1) Let $T \in \mathcal U_n$ be a rigid object. Then $\mathrm{CF}(T) \neq \{S_1,\cdots,S_n \}$.

(2) Let $\mathcal W \in \mathrm{Wid}(\mathcal U_n)$. Then $\mathcal W \in \mathrm{Exc}(\mathcal U_n)$ if and only if 
\[
   \mathrm{CF}(\mathcal W) \neq \{S_1,\cdots,S_n \}.
\]
In particular, $\mathcal U_n \in \mathrm{NExc}(\mathcal U_n)$.

(3) Let $\mathcal W \in \mathrm{Wid}(\mathcal U_n)$. Then $\mathcal W \in \mathrm{NExc}(\mathcal U_n)$ if and only if there exists an indecomposable object $U \in \mathcal W$ such that $l(U)=n$.
\end{prop}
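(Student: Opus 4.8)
The plan is to reduce everything to part (1), whose proof is a combinatorial argument on the cyclic arrangement $\{S_1,\dots,S_n\}$, and then obtain (2) and (3) formally from (1) together with the results already established. The first step is to record the \emph{arc description} of indecomposables. Using $\tau S_i=S_{i-1}$ and Serre duality one checks by induction that the indecomposable $(S_a)_{[l]}$ with socle $S_a$ and length $l$ has composition factors $S_a,S_{a+1},\dots,S_{a+l-1}$ (indices mod $n$) and top $S_{a+l-1}$; I identify it with the arc $[a,a+l-1]$ on $\mathbb Z/n$. Applying $\mathrm{Ext}^1_{\mathcal U_n}(U,V)\cong\mathrm{D}\,\mathrm{Hom}_{\mathcal U_n}(V,\tau U)$ and Lemma~\ref{uni_hom} to $U=(S_a)_{[p]}$, $V=(S_b)_{[q]}$, with $\tau U=(S_{a-1})_{[p]}$, yields the explicit criterion that does all the work: $\mathrm{Ext}^1(U,V)\neq 0$ if and only if $S_{a-1}\in\mathrm{CF}(V)$ and $\mathrm{top}(V)\in\mathrm{CF}(\tau U)$, i.e. $a-1\in[b,b+q-1]$ and $b+q-1\in[a-1,a+p-2]$.

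For (1), write $T=\bigoplus_j T_j$ into indecomposables. Each $T_j$ is rigid, hence exceptional by Corollary~\ref{indec_exc}, hence $l(T_j)<n$ by Lemma~\ref{uni_exc}, so each $T_j$ is a \emph{proper} arc. Suppose for contradiction that $\mathrm{CF}(T)=\{S_1,\dots,S_n\}$, i.e. these proper arcs cover $\mathbb Z/n$. I would extract a minimal covering subfamily $V_1,\dots,V_s$ (with $s\geq 2$, since one proper arc cannot cover), labelled in cyclic order; minimality forbids containments, so socle-indices and top-indices both increase cyclically and consecutive arcs overlap enough to cover the gaps. Writing $\alpha,\omega$ for socle and top indices, the covering condition gives $\alpha(V_{i+1})-1\in[\alpha(V_i),\omega(V_i)]$ and the cyclic ordering of tops gives $\omega(V_i)\in[\alpha(V_{i+1})-1,\omega(V_{i+1})-1]$; by the criterion, with $U=V_{i+1}$ and $V=V_i$, these are exactly the conditions for $\mathrm{Ext}^1(V_{i+1},V_i)\neq 0$, contradicting the rigidity of $T$. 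The main obstacle is precisely this step: setting up the minimal circular-arc cover cleanly and verifying both inclusions against the Ext criterion with correct cyclic bookkeeping.

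For (2), if $\mathcal W=\langle T\rangle$ with $T$ rigid (equivalently, by Proposition~\ref{exc_rigid}, generated by an exceptional sequence), then since the objects with composition factors in a fixed set of simples form a Serre, hence wide, subcategory, one has $\mathcal W\subseteq\{X\mid\mathrm{CF}(X)\subseteq\mathrm{CF}(T)\}$, so $\mathrm{CF}(\mathcal W)=\mathrm{CF}(T)\neq\{S_1,\dots,S_n\}$ by (1). Conversely, if $\mathrm{CF}(\mathcal W)\neq\{S_1,\dots,S_n\}$, then no indecomposable $U\in\mathcal W$ can satisfy $l(U)\geq n$ (such $U$ would have every simple as a composition factor); hence every indecomposable of $\mathcal W$ has length $<n$ and is exceptional by Lemma~\ref{uni_exc}. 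Since $\mathrm{rank}\,K_0(\mathcal U_n)=n<\infty$, Corollary~\ref{wide_gen_exc} gives $\mathcal W\in\mathrm{Exc}(\mathcal U_n)$. The final assertion follows because $\mathrm{CF}(\mathcal U_n)=\{S_1,\dots,S_n\}$.

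For (3), by (2) we have $\mathcal W\in\mathrm{NExc}(\mathcal U_n)$ if and only if $\mathrm{CF}(\mathcal W)=\{S_1,\dots,S_n\}$. If some indecomposable $U\in\mathcal W$ has $l(U)=n$, then $\mathrm{CF}(U)=\{S_1,\dots,S_n\}$, giving the ``if'' direction. Conversely, if $\mathcal W\in\mathrm{NExc}(\mathcal U_n)$, then were every indecomposable of $\mathcal W$ of length $<n$, all would be exceptional and Corollary~\ref{wide_gen_exc} would force $\mathcal W\in\mathrm{Exc}(\mathcal U_n)$; thus $\mathcal W$ contains an indecomposable of length $\geq n$, and Lemma~\ref{wide_length} then produces one of length exactly $n$.
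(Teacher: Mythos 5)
Your parts (2) and (3) are essentially the paper's own proofs: the same reduction of (2)$\Rightarrow$ to part (1) via Proposition~\ref{exc_rigid}, the same use of Lemma~\ref{uni_exc} and Corollary~\ref{wide_gen_exc} for (2)$\Leftarrow$, and the same interplay of Lemma~\ref{wide_length} and Corollary~\ref{wide_gen_exc} for (3); routing (3) through the $\mathrm{CF}$-criterion of (2) rather than arguing the forward direction by contradiction is cosmetic, and your observation that $\mathrm{CF}(\langle T\rangle)=\mathrm{CF}(T)$ because the objects with composition factors in a fixed set of simples form a Serre, hence wide, subcategory makes explicit a step the paper leaves implicit.

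For (1) you take a genuinely different route. Your arc dictionary and Ext-criterion are correct, and are exactly the same combination of Serre duality and Lemma~\ref{uni_hom} the paper uses; the difference lies in how the contradiction is extracted. The paper avoids all covering combinatorics with one trick: take a \emph{longest} indecomposable summand $E$ of $T$. Then $\mathrm{top}(\tau^{-}E)\notin\mathrm{CF}(E)$ since $l(E)<n$, so some summand $F$ satisfies $\mathrm{top}(\tau^{-}E)\in\mathrm{CF}(F)$, and maximality gives $l(F)\leq l(\tau^{-}E)$, which forces $\mathrm{soc}(F)\in\mathrm{CF}(\tau^{-}E)$; Lemma~\ref{uni_hom} and Serre duality then yield $\mathrm{Ext}^1_{\mathcal U_n}(F,E)\neq 0$ in a single step. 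Your minimal circular cover argument does reach the same contradiction, and the two conditions you match against the Ext-criterion are true, but the step you yourself flag as the obstacle is a real debt: you must prove that in a minimal, containment-free cover the cyclic predecessor $V_i$ \emph{itself} covers the point $\alpha(V_{i+1})-1$ (it is covered by some arc; one rules out $V_j$ with $j\neq i$ by showing such a $V_j$ would swallow $V_i$ or an intermediate arc, contradicting minimality), and that the tops are cyclically ordered as the socles. These facts are standard but not free, and a fully written version of your proof would be noticeably longer than the paper's. What your approach buys in exchange is finer information: it exhibits a whole cyclic chain of nonzero extensions among the covering summands, whereas the maximal-summand trick collapses the global analysis to one local length comparison and produces a single nonzero Ext, which is all the proposition needs.
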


\begin{proof}
(1) If $\mathrm{CF}(T) = \{S_1,\cdots,S_n \}$, we take a longest indecomposable direct summand $E$ of $T$. By Corollary~\ref{indec_exc}, $E$ is exceptional. And by Lemma~\ref{uni_exc}, $l(E)<n$, so $\mathrm{top}(\tau^{-}E) \notin \mathrm{CF}(E)$. Since $\mathrm{CF}(T) = \{S_1,\cdots,S_n \}$, there exists an indecomposable direct summand $F$ of $T$ such that $\mathrm{top}(\tau^{-}E) \in \mathrm{CF}(F)$. Then $\mathrm{soc}(F) \in \mathrm{CF}(\tau^- E)$ since $l(F) \leq l(\tau^- E)$. By Lemma~\ref{uni_hom}, $\mathrm{Hom}_{\mathcal U_n}(\tau ^{-}E, F) \neq 0$. It follows that $\mathrm{Ext}_{\mathcal U_n}^1(F,E) \neq 0$ by Serre duality. A contradiction.

(2) $\Rightarrow$: By Proposition~\ref{exc_rigid}, there exists a rigid object $T$ such that $\mathcal W= \langle T \rangle$. So $\mathrm{CF}(\mathcal W)=\mathrm{CF}(T)$. Then we are done by (1).

$\Leftarrow$: Since $\mathrm{CF}(\mathcal W) \neq \{S_1,\cdots,S_n \}$, each indecomposable object $E$ in $\mathcal W$ satisfies $l(E)<n$, so $E$ is exceptional by Lemma~\ref{uni_exc}. Notice that $K_0(\mathcal U_n)=\mathrm{Span}_{\mathbb Z}([S_1], \cdots, [S_n])$. By Lemma~\ref{wide_gen_exc}, we have $\mathcal W \in \mathrm{Exc}(\mathcal U_n)$.

(3) $\Rightarrow$: If not, by Lemma~\ref{wide_length}, each indecomposable object in $\mathcal W$ has length strictly less than $n$, hence is exceptional by Lemma~\ref{uni_exc}. By Lemma~\ref{wide_gen_exc}, we have $\mathcal W \in \mathrm{Exc}(\mathcal U_n)$. A contradiction.

$\Leftarrow$: If there is an indecomposable object $U$ with $l(U)=n$, then clearly $\mathrm{CF}(\mathcal W) = \{S_1,\cdots,S_n \}$. By (2), $\mathcal W \in \mathrm{NExc}(\mathcal U_n)$.
\end{proof}

\begin{lem} \label{U0_perp}
   The following statements hold.

   (1) $\prescript{\perp}{}{S^{[n]}}=\langle S, \tau S, \cdots , \tau^{n-2}S \rangle \in \mathrm{Exc}(\mathcal U_n)$.

   (2) $\langle S, \tau S, \cdots , \tau^{n-2}S \rangle^\perp=\langle S^{[n]} \rangle \in \mathrm{NExc}(\mathcal U_n)$.

   (3) $(\prescript{\perp}{}{S^{[n]}},\langle S^{[n]} \rangle)$ is a complete orthogonal pair in $\mathcal U_n$.
\end{lem}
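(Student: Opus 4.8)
The plan is to prove part (2) first and then read off (1) and (3) from the perpendicular calculus. Throughout write $\mathcal V := \langle S, \tau S, \ldots, \tau^{n-2}S\rangle$, and record the basic shape of $S^{[n]}$: it is uniserial of length $n$ with $\mathrm{top}(S^{[n]})=S$, and since $\mathcal U_n$ has exactly $n$ simples and $\tau S_i=S_{i-1}$, its socle is $\tau^{n-1}S$ and $\mathrm{CF}(S^{[n]})=\{S,\tau S,\ldots,\tau^{n-1}S\}$ is the full set of simples. First I would note that $\mathcal V\in\mathrm{Exc}(\mathcal U_n)$: as $\mathcal V$ is generated by simples, $\mathrm{CF}(\mathcal V)\subseteq\{S,\tau S,\ldots,\tau^{n-2}S\}$ omits $\tau^{n-1}S$, so $\mathrm{CF}(\mathcal V)\neq\{S_1,\ldots,S_n\}$ and Proposition~\ref{uniserial}(2) applies. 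By Proposition~\ref{exc_rigid} there is a rigid $T$ with $\mathcal V=\langle T\rangle$, whence Proposition~\ref{perp_cal} makes $(\mathcal V,\mathcal V^\perp)$ a complete Ext-orthogonal pair, with $\mathcal V^\perp=\langle T\rangle^\perp=T^\perp$.

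The heart of the argument is the identification $\mathcal V^\perp=\langle S^{[n]}\rangle$. Using Serre duality I would rewrite the defining conditions of $\mathcal V^\perp$ for an indecomposable $A$. The vanishing $\mathrm{Hom}(\tau^k S,A)=0$ for $0\le k\le n-2$ forces $\mathrm{soc}(A)=\tau^{n-1}S$, since by Lemma~\ref{uni_hom} a simple maps nontrivially into $A$ exactly when it is the socle. The vanishing $\mathrm{Ext}^1(\tau^k S,A)\cong D\,\mathrm{Hom}(A,\tau^{k+1}S)=0$ for $0\le k\le n-2$ forces $\mathrm{top}(A)=S$, since a nonzero map $A\to\tau^m S$ exists only when $\tau^m S\cong\mathrm{top}(A)$. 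The same two computations run in reverse show that any indecomposable with top $S$ and socle $\tau^{n-1}S$ lies in $\mathcal V^\perp$. Because $\tau^{l(A)-1}S=\mathrm{soc}(A)=\tau^{n-1}S$ is equivalent to $n\mid l(A)$, the indecomposables of $\mathcal V^\perp$ are exactly $S^{[n]},S^{[2n]},S^{[3n]},\ldots$; in particular $S^{[n]}\in\mathcal V^\perp$, giving $\langle S^{[n]}\rangle\subseteq\mathcal V^\perp$.

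For the reverse inclusion I would show that the abelian category $\mathcal V^\perp$ is a length category whose unique simple is $S^{[n]}$. The length-$n$ subobject of $S^{[kn]}$ has socle $\tau^{n-1}S$ and top $S$, hence is isomorphic to $S^{[n]}$, so $S^{[kn]}$ admits a proper nonzero $\mathcal V^\perp$-subobject for $k\ge 2$, while $S^{[n]}$ has no indecomposable of $\mathcal V^\perp$ of strictly smaller length and is therefore $\mathcal V^\perp$-simple. Thus $S^{[n]}$ is the unique simple object of $\mathcal V^\perp$, and an induction on $\mathcal U_n$-length shows that any wide subcategory of $\mathcal V^\perp$ containing $S^{[n]}$ already equals $\mathcal V^\perp$; applied to $\langle S^{[n]}\rangle$ this yields $\mathcal V^\perp=\langle S^{[n]}\rangle$. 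That $\langle S^{[n]}\rangle\in\mathrm{NExc}(\mathcal U_n)$ then follows from Proposition~\ref{uniserial}(2), since $\mathrm{CF}(\langle S^{[n]}\rangle)$ is the full set of simples. This completes (2).

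Finally (1) and (3) are formal consequences. Ext-orthogonality of $(\mathcal V,\mathcal V^\perp)$ gives $\mathcal V=\prescript{\perp}{}{(\mathcal V^\perp)}=\prescript{\perp}{}{\langle S^{[n]}\rangle}=\prescript{\perp}{}{S^{[n]}}$, which together with $\mathcal V\in\mathrm{Exc}(\mathcal U_n)$ is exactly (1); and the completeness of the pair, rewritten via $\prescript{\perp}{}{S^{[n]}}=\mathcal V$ and $\langle S^{[n]}\rangle=\mathcal V^\perp$, is precisely (3). I expect the main obstacle to be the reverse inclusion in the identification of $\mathcal V^\perp$: the genuinely substantive point is that $\mathcal V^\perp$ is a length category with $S^{[n]}$ as its only simple object, and this is where the uniserial structure through Lemma~\ref{uni_hom} and the divisibility $n\mid l(A)$ are essential.
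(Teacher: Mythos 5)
Your proposal is correct, and its computational heart coincides with the paper's proof of part (2): both use Lemma~\ref{uni_hom} together with Serre duality to show that an indecomposable $A$ lies in $\langle S,\tau S,\cdots,\tau^{n-2}S\rangle^\perp$ exactly when $\mathrm{top}(A)\cong S$ and $\mathrm{soc}(A)\cong\tau^{n-1}S$, hence $A\cong S^{[ln]}$ for some $l\geq 1$. Where you genuinely diverge is in the logical organization. The paper proves (1) first, by a second direct computation: an indecomposable $M$ lies in $\prescript{\perp}{}{S^{[n]}}$ if and only if $\tau^{n-1}S\notin\mathrm{CF}(M)$, again via Lemma~\ref{uni_hom} and Serre duality. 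You instead prove (2) first, feed $\mathcal V=\langle S,\tau S,\cdots,\tau^{n-2}S\rangle\in\mathrm{Exc}(\mathcal U_n)$ into Propositions~\ref{exc_rigid} and~\ref{perp_cal} to obtain the complete Ext-orthogonal pair $(\mathcal V,\mathcal V^\perp)$, and read off (1) formally from $\mathcal V=\prescript{\perp}{}{(\mathcal V^\perp)}=\prescript{\perp}{}{\langle S^{[n]}\rangle}=\prescript{\perp}{}{S^{[n]}}$; this trades the paper's explicit description of the left perpendicular for abstract perpendicular calculus, which the paper only invokes for (3). You are also more careful at one spot: for the inclusion $\mathcal V^\perp\subseteq\langle S^{[n]}\rangle$ the paper writes ``clearly'', while you make the point explicit by identifying $S^{[n]}$ as the unique simple of the length category $\mathcal V^\perp$ and filtering each $S^{[kn]}$ by the extension $0\to S^{[(k-1)n]}\to S^{[kn]}\to S^{[n]}\to 0$, then inducting on length; this is exactly the content hidden in the paper's remark. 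A minor cosmetic difference: you certify $\mathcal V\in\mathrm{Exc}(\mathcal U_n)$ via the composition-factor criterion of Proposition~\ref{uniserial}(2), whereas the paper just observes that $(S,\tau S,\cdots,\tau^{n-2}S)$ is itself an exceptional sequence; and for $\langle S^{[n]}\rangle\in\mathrm{NExc}(\mathcal U_n)$ the paper cites Proposition~\ref{uniserial}(3) while you cite part (2) of that proposition --- these are equivalent. Both routes are valid; the paper's is more symmetric in treating the two perpendiculars directly, while yours minimizes explicit computation at the cost of invoking the Ext-orthogonal machinery earlier.
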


\begin{proof} 
   (1) Let $M$ be an indecomposable objet. By Lemma~\ref{uni_hom}, we have that $\mathrm{Hom}_{\mathcal U_n}(M, S^{[n]})=0$ if and only if $\mathrm{soc}(S^{[n]}) \notin \mathrm{CF}(M)$, i.e. $\tau^{n-1}S \notin \mathrm{CF}(M)$. By Serre duality and Lemma~\ref{uni_hom}, $\mathrm{Ext}^1_{\mathcal U_n}(M, S^{[n]})=0$ if and only if $\mathrm{top}(\tau^-S^{[n]}) \notin \mathrm{CF}(M)$, i.e. $\tau^{n-1}S \notin \mathrm{CF}(M)$. So $M \in \prescript{\perp}{}{S^{[n]}}$ if and only if $\tau^{n-1}S \notin \mathrm{CF}(M)$. And $\tau^{n-1}S \notin \mathrm{CF}(M)$ if and only if $M \cong \tau^i(S^{[j]})$, where $1 \leq i \leq n-2$, $1 \leq j \leq n-1-i$, if and only if $M \in \langle S, \tau S, \cdots , \tau^{n-2}S \rangle$. In a word, $M \in \prescript{\perp}{}{S^{[n]}}$ if and only if $M \in \langle S, \tau S, \cdots , \tau^{n-2}S \rangle$.

   Obviously $(S, \tau S, \cdots , \tau^{n-2}S)$ is an exceptional sequence.

   (2) Let $M$ be an indecomposable object. Then $\mathrm{Hom}_{\mathcal U_n}(\tau^i S, M)=0$ for $0 \leq i \leq n-2$ if and only if $\mathrm{soc}(M) \cong \tau^{n-1}S$. And $\mathrm{Ext}_{\mathcal U_n}(\tau^i S, M)=0$ for $0 \leq i \leq n-2$ if and only if $\mathrm{top}(M) \cong S$. So $M \in \langle S, \tau S, \cdots , \tau^{n-2}S \rangle^\perp$ if and only if $M \cong S^{[ln]}$ for some $l \geq 1$. So
   \[
      \mathrm{add}\{S^{[ln]}\ | \ l \in \mathbb N \}=\langle S, \tau S, \cdots , \tau^{n-2}S \rangle^\perp
   \]
   is a wide subcategory. It follows that$\langle S^{[n]} \rangle \subseteq \mathrm{add}\{S^{[ln]}\ | \ l \in \mathbb N \}$. And clearly $\langle S^{[n]} \rangle \supseteq \mathrm{add}\{S^{[ln]}\ | \ l \in \mathbb N \}$.
   So 
   \[
      \langle S^{[n]} \rangle = \mathrm{add}\{S^{[ln]}\ | \ l \in \mathbb N \}.
   \]
   Then $M \in \langle S, \tau S, \cdots , \tau^{n-2}S \rangle^\perp$ if and only if $M \in \langle S^{[n]} \rangle$. By Proposition~\ref{uniserial}(3), we have $\langle S^{[n]} \rangle \in \mathrm{NExc}(\mathcal U_n)$.

   (3) By (1), we have $\prescript{\perp}{}{S^{[n]}} \in \mathrm{Exc}(\mathcal U_n)$. By (2) and Proposition~\ref{perp_cal}(2), $(\prescript{\perp}{}{S^{[n]}},\langle S^{[n]} \rangle)$ is a complete Ext-orthogonal pair in $\mathcal U_n$.

\end{proof}

The following result is essentially due to \cite[Theorem~2.3.25]{dichev2009thick}, whose proof seems not rigorous. Here, we provide a proof.

\begin{prop}  \label{thick_uniserial}
We have a mutually invertible pair of one-to-one correspondences

\begin{align*}
\mathrm{Exc}(\mathcal U_n) &\leftrightarrow \mathrm{NExc}(\mathcal U_n)  \\
                        \mathcal W  &\mapsto \mathcal W^ \perp \\
   \prescript{\perp}{}{\mathcal V}  &\mapsfrom \mathcal V
\end{align*}
\end{prop}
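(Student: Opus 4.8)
The plan is to show that the assignments $\Phi\colon\mathcal W\mapsto\mathcal W^\perp$ and $\Psi\colon\mathcal V\mapsto\prescript{\perp}{}{\mathcal V}$ are well defined and mutually inverse. \emph{Well-definedness of $\Psi$.} Given $\mathcal V\in\mathrm{NExc}(\mathcal U_n)$, Proposition~\ref{uniserial}(3) provides an indecomposable $W\in\mathcal V$ with $l(W)=n$, so that $\mathrm{CF}(W)=\{S_1,\dots,S_n\}$. If some indecomposable $U\in\prescript{\perp}{}{\mathcal V}$ had $l(U)=n$, then also $\mathrm{CF}(U)=\{S_1,\dots,S_n\}$, and Lemma~\ref{uni_hom} would give $\mathrm{Hom}_{\mathcal U_n}(U,W)\neq 0$, contradicting $U\in\prescript{\perp}{}{\mathcal V}$. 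Hence $\prescript{\perp}{}{\mathcal V}$ contains no indecomposable of length $n$, so $\prescript{\perp}{}{\mathcal V}\in\mathrm{Exc}(\mathcal U_n)$ by Proposition~\ref{uniserial}(3).

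\emph{Well-definedness of $\Phi$.} Given $\mathcal W\in\mathrm{Exc}(\mathcal U_n)$, Proposition~\ref{uniserial}(2) yields a simple $S_j\notin\mathrm{CF}(\mathcal W)$; let $U$ be the indecomposable of length $n$ with $\mathrm{soc}(U)=S_j$, so that $\mathrm{CF}(U)=\{S_1,\dots,S_n\}$ and $\mathrm{top}(U)=\tau S_j$. For every indecomposable $X\in\mathcal W$ one has $S_j\notin\mathrm{CF}(X)$, so Lemma~\ref{uni_hom} gives $\mathrm{Hom}_{\mathcal U_n}(X,U)=0$ (the socle $S_j$ of $U$ is absent from $\mathrm{CF}(X)$) and $\mathrm{Hom}_{\mathcal U_n}(U,\tau X)=0$ (since $\mathrm{top}(U)=\tau S_j\in\mathrm{CF}(\tau X)$ would force $S_j\in\mathrm{CF}(X)$); by Serre duality the latter means $\mathrm{Ext}^1_{\mathcal U_n}(X,U)=0$. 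Therefore $U\in\mathcal W^\perp$, and $\mathcal W^\perp\in\mathrm{NExc}(\mathcal U_n)$ by Proposition~\ref{uniserial}(3).

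\emph{The composite $\Psi\Phi$.} Writing $\mathcal W=\langle X\rangle$ with $X$ rigid (Proposition~\ref{exc_rigid}), Proposition~\ref{perp_cal}(2) shows that $(\langle X\rangle,X^\perp)$ is Ext-orthogonal, whence $\prescript{\perp}{}{(\mathcal W^\perp)}=\prescript{\perp}{}{(X^\perp)}=\langle X\rangle=\mathcal W$. Thus $\Psi\Phi=\mathrm{id}$, so $\Phi$ is injective and $\Psi$ is surjective.

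\emph{The main obstacle: $\Phi\Psi=\mathrm{id}$.} It remains to prove $(\prescript{\perp}{}{\mathcal V})^\perp=\mathcal V$ for $\mathcal V\in\mathrm{NExc}(\mathcal U_n)$. The inclusion $\mathcal V\subseteq(\prescript{\perp}{}{\mathcal V})^\perp$ is formal and, by the previous steps, both sides lie in $\mathrm{NExc}(\mathcal U_n)$; the difficulty, and the heart of the argument, is the reverse inclusion. The obstruction is that $\mathcal V$ need not be generated by a rigid object, so Proposition~\ref{perp_cal} cannot be applied to $\mathcal V$ itself. My plan is first to record the Serre-duality symmetry $\mathcal X^\perp=\tau(\prescript{\perp}{}{\mathcal X})$, valid for every subcategory $\mathcal X$, which identifies $\Phi\Psi=\mathrm{id}$ with the assertion that $\prescript{\perp}{}{(-)}$ is injective on $\mathrm{NExc}(\mathcal U_n)$. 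To prove this injectivity I would argue by induction on $n$: when $\prescript{\perp}{}{\mathcal V}\neq 0$, choose an indecomposable (necessarily exceptional) $E\in\prescript{\perp}{}{\mathcal V}$; then $\mathcal V\subseteq E^\perp$, and $E^\perp$ is the product of a connected uniserial category with Serre duality of rank $n-l(E)<n$ and an exceptional category of Dynkin type $A$, so that the length-$n$ objects of $\mathcal U_n$ meeting $\mathcal V$ all lie in the smaller uniserial factor, to which the inductive hypothesis applies. The technical heart is to verify this decomposition of $E^\perp$ and to track the two perpendicular operations through it, together with the degenerate base case $\prescript{\perp}{}{\mathcal V}=0$, which must be shown to force $\mathcal V=\mathcal U_n$; I expect these bookkeeping steps, rather than any single conceptual point, to be the main work.
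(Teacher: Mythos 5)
Your first three steps are correct: the well-definedness of both assignments and the identity $\prescript{\perp}{}{(\mathcal W^\perp)}=\mathcal W$ all go through, and your argument that $\mathcal W^\perp\in\mathrm{NExc}(\mathcal U_n)$ --- exhibiting the explicit length-$n$ object $U$ with $\mathrm{soc}(U)=S_j$ for a simple $S_j\notin\mathrm{CF}(\mathcal W)$ and checking $U\in\mathcal W^\perp$ via Lemma~\ref{uni_hom} and Serre duality --- is a nice direct alternative to the paper's proof, which instead argues by contradiction (concatenating exceptional sequences for $\mathcal W$ and $\mathcal W^\perp$ and invoking $\mathcal U_n\in\mathrm{NExc}(\mathcal U_n)$ via Proposition~\ref{perp_cal}(2)). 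The reduction of $\Phi\Psi=\mathrm{id}$ to injectivity of $\prescript{\perp}{}{(-)}$ on $\mathrm{NExc}(\mathcal U_n)$ is also sound, as is the symmetry $\mathcal X^\perp=\tau(\prescript{\perp}{}{\mathcal X})$.

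The genuine gap is that the fourth step, $(\prescript{\perp}{}{\mathcal V})^\perp=\mathcal V$ --- which you correctly identify as the heart of the proposition, and which occupies the bulk of the paper's proof --- is never actually proved: you announce an induction on $n$ but explicitly defer every step that carries the content. Concretely, your plan requires (i) identifying the first factor of $E^\perp$ as a connected uniserial hereditary category with Serre duality on $n-l(E)$ simples, which Proposition~\ref{uni_exc_perp} does not give (it only gives the orthogonal decomposition; the identification is asserted in a remark, and only in the algebraically closed nilpotent-representation setting); (ii) verifying that $\mathcal V$ meets the uniserial factor in a subcategory lying in $\mathrm{NExc}$ of that factor, that relative and ambient perpendiculars match across the coproduct, and that left perpendicular is injective on the Dynkin factor; and (iii) the base case $\prescript{\perp}{}{\mathcal V}=0\Rightarrow\mathcal V=\mathcal U_n$, which itself needs an argument. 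None of this is done, so as submitted the proposal establishes three of the four assertions and only conjectures a strategy for the decisive one. For comparison, the paper avoids induction entirely: it fixes an indecomposable $U_0\in\mathcal V$ of length $n$, uses that $(\prescript{\perp}{}{U_0},\langle U_0\rangle)$ is a complete Ext-orthogonal pair (Lemma~\ref{U0_perp}(3)), restricts it to $\mathcal V$ and to $(\prescript{\perp}{}{\mathcal V})^\perp$ via Lemma~\ref{complete_ext_orth_pair}(2), and then proves $\mathcal V\cap\prescript{\perp}{}{U_0}=(\prescript{\perp}{}{\mathcal V})^\perp\cap\prescript{\perp}{}{U_0}$ working inside $\prescript{\perp}{}{U_0}$ with Corollary~\ref{wide_gen_exc} and Proposition~\ref{perp_cal}(2) --- no identification of $E^\perp$ and no base-case analysis are needed. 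Completing your sketch along the paper's lines (or carrying out your induction honestly) is what is still required.
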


\begin{proof}
For any $\mathcal W \in \mathrm{Exc}(\mathcal U_n)$, we claim that $\mathcal W^{\perp} \in \mathrm{NExc}(\mathcal U_n)$. If not, suppose that $\mathcal W = \langle E_1, \cdots , E_r \rangle$, $\mathcal W^\perp = \langle F_1, \cdots , F_s \rangle$, where $(E_1, \cdots , E_r)$ and $(F_1, \cdots , F_s)$ are exceptional sequences. Then $(E_1, \cdots , E_r, F_1, \cdots , F_s)$ is an exceptional sequence. By Proposition~\ref{perp_cal}(2), 
\[
   \mathcal U_n = \langle E_1, \cdots , E_r, F_1, \cdots , F_s \rangle \in \mathrm{Exc}(\mathcal U_n),
\]
which contradicts to Proposition~\ref{uniserial}(2).

On the other hand, for any $\mathcal V \in \mathrm{NExc}(\mathcal U_n)$, we claim that $\prescript{\perp}{}{\mathcal V} \in \mathrm{Exc}(\mathcal U_n)$. If not, by Proposition~\ref{uniserial}(3), there exists an indecomposable object $U \in \prescript{\perp}{}{\mathcal V}$ with $l(U)=n$. And there exists an indecomposable object $V \in \mathcal V$ with $l(V)=n$, so $\mathrm{Hom}_{\mathcal U_n}(U,V) \neq 0$ by Lemma~\ref{uni_hom}. A contradiction.

So the maps are well defined. By Proposition~\ref{perp_cal}(2), we have $\mathcal W=\prescript{\perp}{}{(\mathcal W^\perp)}$ for $\mathcal W \in \mathrm{Exc}(\mathcal U_n)$. It remains to prove $\mathcal V=(\prescript{\perp}{}{V})^\perp$ for $\mathcal V \in \mathrm{NExc}(\mathcal U_n)$. 

By Proposition~\ref{uniserial}(3), there is an indecomposable object $U_0 \in \mathcal V$ with $l(U_0)=n$. By Lemma~\ref{U0_perp}(3), we have that $(\prescript{\perp}{}{U_0}, \langle U_0 \rangle)$ is a complete Ext-orthogonal pair in $\mathcal U_n$. By the dual version of Lemma~\ref{complete_ext_orth_pair}(2), $(\mathcal V \cap \prescript{\perp}{}{U_0}, \langle U_0 \rangle)$ is a complete Ext-orthogonal pair in $\mathcal V$, hence $\mathcal V=\langle \mathcal V \cap \prescript{\perp}{}{U_0}, U_0 \rangle$. For the same reason, $((\prescript{\perp}{}{\mathcal V})^\perp \cap \prescript{\perp}{}{U_0}, \langle U_0 \rangle)$ is a complete Ext-orthogonal pair in $(\prescript{\perp}{}{\mathcal V})^\perp$. 

We have 
\begin{gather} \label{U_0}
   \prescript{\perp}{}{U_0} \cap \prescript{\perp}{}{(\prescript{\perp}{}{U_0}\cap \mathcal V)}=\prescript{\perp}{}{\langle U_0, \prescript{\perp}{}{U_0} \cap \mathcal V \rangle}=\prescript{\perp}{}{\mathcal V} \subseteq \prescript{\perp}{}{U_0}.
\end{gather}
Recall from the proof of Lemma~\ref{U0_perp}(1) that each indecomposable element in $\prescript{\perp}{}{U_0}$ is exceptional. By Corollary~\ref{wide_gen_exc}, we have $\prescript{\perp}{}{U_0} \cap \mathcal V \in \mathrm{Exc}(\prescript{\perp}{}{U_0})$, and $\prescript{\perp}{}{\mathcal V} \in \mathrm{Exc}(\mathcal U_n)$. By the dual version of Proposition~\ref{perp_cal}(2) and \ref{U_0}, $(\prescript{\perp}{}{\mathcal V}, \prescript{\perp}{}{U_0} \cap \mathcal V)$ is a complete Ext-orthogonal pair in $\prescript{\perp}{}{U_0}$. Since $\prescript{\perp}{}{\mathcal V} \in \mathrm{Exc}(\mathcal U_n)$, by Proposition~\ref{perp_cal}(2), $(\prescript{\perp}{}{\mathcal V},(\prescript{\perp}{}{\mathcal V})^\perp)$ is a complete Ext-orthogonal pair in $\mathcal U_n$. And then by Lemma~\ref{complete_ext_orth_pair}(2), $(\prescript{\perp}{}{\mathcal V}, \prescript{\perp}{}{U_0} \cap (\prescript{\perp}{}{\mathcal V})^\perp)$ is also a complete Ext-orthogonal pair in $\prescript{\perp}{}{U_0}$. Now we have proved that 
\[
   \mathcal V \cap \prescript{\perp}{}{U_0}=(\prescript{\perp}{}{\mathcal V})^\perp \cap \prescript{\perp}{}{U_0}.
\]
So we have 
\[
   \mathcal V= \langle \mathcal V \cap \prescript{\perp}{}{U_0}, U_0 \rangle=\langle (\prescript{\perp}{}{\mathcal V})^\perp \cap \prescript{\perp}{}{U_0}, U_0 \rangle=(\prescript{\perp}{}{\mathcal V})^\perp.
\]
The last equality uses the fact that $((\prescript{\perp}{}{\mathcal V})^\perp \cap \prescript{\perp}{}{U_0}, \langle U_0 \rangle)$ is a complete Ext-orthogonal pair in $(\prescript{\perp}{}{\mathcal V})^\perp$. The proof is complete.
\end{proof}

\begin{rmk}
{\rm We view $\mathrm{Wid}(\mathcal U_n)$ as a poset ordered by inclusion. It is known that $\mathrm{Wid}(\mathcal U_n)$ is isomorphic to $\mathrm{NC}^B(n)$ as a lattice, where $\mathrm{NC}^B(n)$ is the noncrossing partition of type $B$. We refer to \cite[Corollary 9.2]{krause2021category}. By the way, the poset $\mathrm{Exc}(\mathcal U_n)$ is not a lattice:  $\langle S_1 \rangle$ and $\langle S_2,\cdots,S_n \rangle \in \mathrm{Exc}(\mathcal U_n)$, but $\langle S_1,\cdots,S_n \rangle=\mathcal U_n \in \mathrm{NExc}(\mathcal U_n)$. }
\end{rmk}

The following proposition about the structure of perpendicular category in $\mathcal U_n$ is well known.

\begin{prop} \label{uni_exc_perp}
   Let $S \in \mathcal U_n$ be a simple object, and let $m<n$. Then 
   \[
      (S^{[m]})^\perp=(S, \tau S, \cdots , \tau^{m-1} S)^\perp \coprod \langle \tau S, \tau^2 S, \cdots , \tau^{m-1} S \rangle.
   \]
\end{prop}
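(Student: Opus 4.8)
The plan is to describe all three subcategories appearing in the statement at the level of indecomposable objects, using the combinatorics of composition factors in the cyclic set $\{S_1,\dots,S_n\}$, and then to verify the coproduct decomposition one indecomposable at a time. First I would record the two structural facts I need about $S^{[m]}$. Since Serre duality gives $\mathrm{Ext}^1_{\mathcal U_n}(S_i,S_j)\cong \mathrm{D\,Hom}_{\mathcal U_n}(S_j,\tau S_i)=\mathrm{D\,Hom}_{\mathcal U_n}(S_j,S_{i-1})$, which is nonzero precisely when $j=i-1$, the composition factors of the uniserial object $S^{[m]}$, read from top to socle, are $S,\tau S,\dots,\tau^{m-1}S$; hence
\[
\mathrm{CF}(S^{[m]})=\{S,\tau S,\dots,\tau^{m-1}S\},\qquad \mathrm{soc}(S^{[m]})=\tau^{m-1}S,
\]
and since $\tau$ is an autoequivalence, $\tau S^{[m]}=(\tau S)^{[m]}$, with $\mathrm{CF}(\tau S^{[m]})=\{\tau S,\dots,\tau^{m}S\}$ and socle $\tau^{m}S$. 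Throughout I parametrize an indecomposable $M$ by its top $\mathrm{top}(M)=\tau^{a}S$ and its length, so that $\mathrm{CF}(M)$ is the ``arc'' $\{\tau^{a}S,\tau^{a+1}S,\dots\}$ running in the $\tau$-direction from the top down to the socle.

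Next I translate the three categories into conditions on $M$ by combining Lemma~\ref{uni_hom} with Serre duality. One gets that $\mathrm{Hom}_{\mathcal U_n}(S^{[m]},M)\neq 0$ iff $S\in\mathrm{CF}(M)$ and $\mathrm{soc}(M)\in\{S,\dots,\tau^{m-1}S\}$, while $\mathrm{Ext}^1_{\mathcal U_n}(S^{[m]},M)\cong\mathrm{D\,Hom}_{\mathcal U_n}(M,(\tau S)^{[m]})\neq 0$ iff $\mathrm{top}(M)\in\{\tau S,\dots,\tau^{m}S\}$ and $\tau^{m}S\in\mathrm{CF}(M)$; negating these gives a criterion for $M\in (S^{[m]})^{\perp}$. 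The same two tools, applied to the simples $S,\tau S,\dots,\tau^{m-1}S$, show that $M\in (S,\tau S,\dots,\tau^{m-1}S)^{\perp}$ iff $\mathrm{soc}(M)\notin\{S,\dots,\tau^{m-1}S\}$ and $\mathrm{top}(M)\notin\{\tau S,\dots,\tau^{m}S\}$, and since $\mathcal U_n$ is a length category, $M\in\langle\tau S,\dots,\tau^{m-1}S\rangle$ iff $\mathrm{CF}(M)\subseteq\{\tau S,\dots,\tau^{m-1}S\}$.

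With these criteria the two inclusions $(S,\dots,\tau^{m-1}S)^{\perp}\subseteq (S^{[m]})^{\perp}$ and $\langle\tau S,\dots,\tau^{m-1}S\rangle\subseteq (S^{[m]})^{\perp}$ are immediate (the first also follows from $S^{[m]}\in\langle S,\dots,\tau^{m-1}S\rangle$). The heart of the argument is the converse: if $M$ is an indecomposable in $(S^{[m]})^{\perp}$ that does \emph{not} lie in $(S,\dots,\tau^{m-1}S)^{\perp}$, then $M\in\langle\tau S,\dots,\tau^{m-1}S\rangle$. Here $M\notin(S,\dots,\tau^{m-1}S)^{\perp}$ forces $\mathrm{soc}(M)\in\{S,\dots,\tau^{m-1}S\}$ or $\mathrm{top}(M)\in\{\tau S,\dots,\tau^{m}S\}$; in the first case the $\mathrm{Hom}$-vanishing forces $S\notin\mathrm{CF}(M)$, and in the second the $\mathrm{Ext}$-vanishing forces $\tau^{m}S\notin\mathrm{CF}(M)$. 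The step requiring the most care, and the place where the hypothesis $m<n$ is essential, is that the arc $\mathrm{CF}(M)$ cannot wrap around the cycle of simples: having an endpoint inside the segment $\{\tau S,\dots,\tau^{m-1}S\}$ while avoiding the two ``barriers'' $S=\tau^{0}S$ and $\tau^{m}S$ confines the whole arc to that segment, which is exactly $M\in\langle\tau S,\dots,\tau^{m-1}S\rangle$. This must be argued in terms of residues modulo $n$ rather than by treating the simples as linearly ordered.

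Finally I would check that the union is a coproduct of additive categories. The intersection is zero because $M\in\langle\tau S,\dots,\tau^{m-1}S\rangle$ forces $\mathrm{soc}(M)\in\{\tau S,\dots,\tau^{m-1}S\}$, contradicting the socle condition defining $(S,\dots,\tau^{m-1}S)^{\perp}$. Hom-orthogonality in both directions follows from Lemma~\ref{uni_hom}: for $A\in (S,\dots,\tau^{m-1}S)^{\perp}$ and $B\in\langle\tau S,\dots,\tau^{m-1}S\rangle$, the relation $\mathrm{top}(A)\notin\{\tau S,\dots,\tau^{m}S\}\supseteq\mathrm{CF}(B)$ kills $\mathrm{Hom}_{\mathcal U_n}(A,B)$, while $\mathrm{soc}(A)\notin\{S,\dots,\tau^{m-1}S\}\supseteq\mathrm{CF}(B)$ kills $\mathrm{Hom}_{\mathcal U_n}(B,A)$. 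Together with the partition of the indecomposables of $(S^{[m]})^{\perp}$ established above, this yields the decomposition
\[
(S^{[m]})^{\perp}=(S,\tau S,\dots,\tau^{m-1}S)^{\perp}\coprod\langle\tau S,\tau^{2}S,\dots,\tau^{m-1}S\rangle .
\]
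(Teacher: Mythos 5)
Your proposal is correct and follows essentially the same route as the paper's proof: both reduce membership in each of the three subcategories to top/socle/composition-factor conditions via Lemma~\ref{uni_hom} and Serre duality (comparing against $S^{[m]}$ and $\tau S^{[m]}$), settle the dichotomy for an indecomposable in $(S^{[m]})^\perp$ by the same cyclic-arc argument modulo $n$, and verify Hom-orthogonality of the two pieces with Lemma~\ref{uni_hom}. The only cosmetic difference is direction: the paper shows that an indecomposable outside $\langle \tau S, \cdots, \tau^{m-1}S \rangle$ lies in $(S,\tau S,\cdots,\tau^{m-1}S)^\perp$, while you argue the contrapositive partition, which is logically equivalent.
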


\begin{proof}
   Let $U \in (S^{[m]})^\perp$ be an indecomposable object not in $\langle \tau S, \cdots , \tau^{m-1} S \rangle$. Then there exists $r \in \{m, m+1, \cdots , n \}$ such that $\tau^r S \in \mathrm{CF}(U)$. We want to prove that $U \in (S, \tau S, \cdots , \tau^{m-1} S)^\perp$.
   
   We claim that $\mathrm{soc}(U) \notin \{S, \tau S, \cdots , \tau^{m-1} S \}$. Or else $\tau^r S \in \mathrm{CF}(U)$ implies that $S \in \mathrm{CF}(U)$. It contradicts to $\mathrm{Hom}_{\mathcal U_n}(S^{[m]},U)=0$.

   We claim that $\mathrm{top}(U) \notin \{\tau S, \tau^2 S, \cdots , \tau^{m} S \}$. Or else $\tau^r S \in \mathrm{CF}(U)$ implies that $\tau^m S \in \mathrm{CF}(U)$. Then by Lemma~\ref{uni_hom}, we have $\mathrm{Hom}_{\mathcal U_n}(U, \tau S^{[m]}) \neq 0$. By Serre duality, it contradicts to $\mathrm{Ext}_{\mathcal U_n}^1(S^{[m]},U) = 0$.

   So $U \notin \langle \tau S, \tau^2 S, \cdots , \tau^m S \rangle$ implies that 
   \begin{gather} \label{soc_U_not_in}
      \mathrm{soc}(U) \notin \{S, \tau S, \cdots , \tau^{m-1} S \} \text{ and } \mathrm{top}(U) \notin \{\tau S,\tau^2 S, \cdots , \tau^{m} S \}.
   \end{gather}
   By Lemma~\ref{uni_hom} and Serre duality, property (\ref{soc_U_not_in}) holds if and only if 
   \[
      U \in (S, \tau S, \cdots , \tau^{m-1} S)^\perp.
   \]

   It remains to prove that for any indecomposable $X \in (S, \tau S, \cdots , \tau^{m-1} S)^\perp$, $Y \in \langle \tau S, \cdots , \tau^{m-1} S \rangle$, we have $\mathrm{Hom}_{\mathcal U_n}(X,Y)=0=\mathrm{Hom}_{\mathcal U_n}(Y,X)$. Notice that $Y \cong \tau^i S^{[j]} $ for some $1 \leq i \leq m-1$, $1 \leq j \leq m-i$. By Lemma~\ref{uni_hom}, the proposition holds.
\end{proof}

\begin{rmk}
   { \rm Let $k$ be an algebraically closed field and $\Gamma _n$ the quiver of $\tilde{\mathbb A}_n$ type with cyclic orientation. Then the $k$-category $\mathcal U_n$ is equivalent to $\mathrm{rep}_0(\Gamma_n, k)$, the category of nilpotent representations of $\Gamma_n$. Let $E$ be an exceptional object of length $m$ in $\mathcal U_n$, then Proposition~\ref{uni_exc_perp} implies that 
   \[
   E^\perp \simeq \mathrm{rep}_0(\Gamma_{n-m}, k) \coprod \mathrm{rep}(\vec{\mathbb A}_{m-1}, k),
   \] 
   where $\vec{\mathbb A}_{m-1}$ means the equioriented $\mathbb A_{m-1}$ quiver. }
\end{rmk}

\section{Weighted projective lines}

We recall the definition of a weighted projective line and coherent sheaves, which is due to \cite[Section~1]{geigle1987class}. 

Let $\bm p=(p_1,p_2,\cdots,p_n)\in \mathbb N_+^n$. We define an abelian group
\[
L(\bm p)=\langle\vec{x_1},\vec{x_2},\cdots,\vec{x_n} \ | \ p_1\vec{x_1}=p_2\vec{x_2}=\cdots=p_n\vec{x_n}\rangle.
\]
The \emph{canonical element} $\vec{c}=p_1\vec{x_1}=\cdots =p_n \vec{x_n}$, and the \emph{dualizing element} $\vec{\omega}=(n-2)\vec{c}-\sum_{i=1}^n\vec{x_i}$. We have the \emph{degree map}
\[
\delta: L(\bm p)\twoheadrightarrow \mathbb Z, \ \vec{x_i}\mapsto p/p_i,
\]
where $p=\mathrm{l.c.m.}(p_1,\cdots,p_n)$.

Let $k$ be an algebraically closed field, and let $\bm \lambda=(\lambda_1=\infty, \lambda_2=0, \lambda_3=1, \lambda_4, \cdots, \lambda_n)$ be an $n$-tuple of different points in $\mathbb P^{1}_k$. One define an $L(\bm p)$-graded ring
\[
S=S(\bm p, \bm{\lambda})=k[X_1,X_2,\cdots,X_n]/(X_i^{p_i}-X_2^{p_2}+\lambda_i X_1^{p_1})_{3 \leq i \leq n}
\]
graded with $\mathrm{deg}(X_i)=\vec{x_i}$. Let
\[
   \mathbb X=\mathbb X(\bm p, \bm \lambda)=\{\mathfrak{p} \text{ homogeneous prime ideal } | \ \mathfrak{p} \nsubseteq (x_1, \cdots x_n)\}
\]
be a topological space equipped with the \emph{Zariski topology}, which has a topological basis 
\[
   \{D(f):=\{\mathfrak{p} \in \mathbb X \ | \ f \notin \mathfrak{p}\} \ | \ f \in (x_1, \cdots , x_n) \text{ homogeneous}\}.
\]
One define the \emph{structure sheaf} $\mathcal O_{\mathbb X}$ on $\mathbb X$ by setting $\mathcal O_{\mathbb X}(D(f))=S_f$. $\mathcal O_{\mathbb X}$ is a sheaf of $L(\bm p)$-graded rings. The \emph{weighted projective line} is by definition the $L(\bm p)$-graded locally ringed space $(\mathbb X, \mathcal O_{\mathbb X})$. The points $\lambda_1, \cdots, \lambda_n$ are called the \emph{exceptional points}, and the other points on $\mathbb X$ are called the \emph{ordinary points}. 

Note that $L(\bm{p})$ acts on the the category of $L(\bm p)$-graded $S$-modules by \emph{grading shift}: for an $L(\bm p)$-graded module $M$, and $\vec l \in L(\bm p)$ we denote by $M(\vec l)$ the $S$-module $M$ with the new grading $M(\vec l)_{\vec l'}=M_{\vec l + \vec l'}$. It induces the grading shift functor on the category of \emph{$L(\bm p)$-graded $\mathcal O_{\mathbb X}$-module}. Precisely, for an $\mathcal O_{\mathbb X}$-module $F$, and $\vec l \in L(p)$, one define $F(\vec l)$ by $(F(\vec l))(U)=(F(U))(\vec l)$ for any open subset $U$ of $\mathbb X$.

A \emph{coherent sheaf} on $\mathbb X$ is by definition an $L(\bm p)$-graded $\mathcal O_{\mathbb X}$-module $F$, where for each $x \in \mathbb X$ there is an open neighbourhood $U$ of $x$ and an exact sequence
\[
   \bigoplus_{j=1}^{m'} \mathcal O_{\mathbb X}(\vec l_j)|_U \to \bigoplus_{i=1}^m \mathcal O_{\mathbb X}(\vec l_i)|_U \to F|_U \to 0.
\]

By \cite{geigle1987class}, the category $\mathrm{coh} \text{-} \mathbb X$ is an Ext-finite hereditary abelian category with Serre duality
\[
\mathrm{Ext}_{\mathbb X}^1(F,G)\xrightarrow{\sim} D \mathrm{Hom}_{\mathbb X}(G,F(\vec \omega)),
\]
where $D$ denotes the dual functor $\mathrm{Hom}_k (-,k)$ of $k$-vector spaces.

A coherent sheaf $F$ is called \emph{locally free} (or a \emph{vector bundle}), if for any $x\in \mathbb X$, there exists an open neighbourhood $U\ni x$, $r\in \mathbb N$, and $\vec l_1,\cdots ,\vec l_r \in L(\bm p)$, such that $F|_{U}\cong \bigoplus_{i=1}^r\mathcal O_{\mathbb X}(\vec l_i)|_{U}$. We call $r$ the \emph{rank} of $F$, denoted by $\mathrm{rk}(F)$. We write $\mathrm{vect}(\mathbb X)$ for the subcategory of vector bundles. Rank one vector bundles are called \emph{line bundles}. Each line bundle has the form $\mathcal O_{\mathbb{X}}(\vec l)$, where $\vec l \in L(\bm p)$. Each rank $n$ vector bundle $F$ has a filtration
\[
0=F_0 \subset F_1 \subset \cdots \subset F_n=F,
\]
whose factors $F_i/F_{i-1}$ are line bundles.

Each $F \in \mathrm{coh} \text{-} \mathbb X$ has a unique maximal finite length subsheaf, denoted by $\mathrm{t}F$. For any $F \in \mathrm{coh} \text{-} \mathbb X$, the short exact sequence
\[
0 \to \mathrm{t}F \to F \to F/\mathrm{t}F \to 0
\]
splits, and $F/\mathrm{t}F$ is a vector bundle.

We denote by $\mathrm{coh}_0 \text{-} \mathbb X$ the subcategory consisting of all \emph{finite length sheaves} (i.e. \emph{torsion sheaves}). A coherent sheaf $F$ is torsion if and only if its \emph{support} 
\[
   \mathrm{supp}(F):=\{x \in \mathbb X \ | \ F_x \neq 0 \}
\]
is a finite set. And in this case, we have $F=\oplus_{x \in \mathrm{supp}(F)} F_x$.

All the simple sheaves on $\mathbb X$ are isomorphic to those $S_\lambda, S_{i,j}$ defined as follows:
\[
 0 \to \mathcal O_{\mathbb X}\xrightarrow{u_\lambda} \mathcal O_{\mathbb X}(\vec c)\to S_\lambda \to 0,
\]
\[
 0 \to \mathcal O_{\mathbb X}((j-1)\vec {x_i})\xrightarrow{x_i} \mathcal O_{\mathbb X}(j\vec{x_i})\to S_{i,j} \to 0,
\]
where $u_\lambda=x_2^{p_2}-\lambda x_1^{p_1}$, $\lambda \notin \bm \lambda$, $j=1,2,\cdots,p_i \in \mathbb Z/ \mathbb Z_{p_i}$. \\
Note that for any $\vec l =\sum_{k=1}^n l_k \vec{x_k}+l \vec c \in L(\bm p)$ written in normal form, we have $S_\lambda (\vec l) \cong S_\lambda$ and $S_{i,j}(\vec l) \cong S_{i,j+l_k}$. 

The subcategory $\mathrm{tor}_{\lambda} \text{-} \mathbb X:=\{ F \in \mathrm{coh}_0 \text{-} \mathbb X \ | \ \mathrm{supp}(F)=\{ \lambda \} \}$ is a connected uniserial category; see \cite[Proposition~2.5]{geigle1987class}. Obviously, it is a wide subcategory.

\begin{lem} \label{bundle_tor}
   Let $E$ be a rank $r$ vector bundle, and let $m \in \mathbb N_+$. Then the following statements hold.
   
   (1) We have the following exact sequence
   \[
   0 \to E(-m \vec x_i) \xrightarrow{x_i^{m}} E \to \oplus_{j=1}^{r}U_j \to 0,
   \]
   where each $U_j$ is an indecomposable torsion sheaf concentrated on $\lambda_i$ with length $m$. 
   
   (2) Let $T \in \mathrm{tor}_{\lambda_i} \text{-} \mathbb X$ with $l(T)\leq m$. Then we have 
   \[
      \mathrm{Hom}_{\mathbb X}(E,T) \cong \oplus_{j=1}^{r}\mathrm{Hom}_{\mathbb X}(U_j, T),
   \] 
   where $U_1,U_2,\cdots, U_r$ are defined in (1).
\end{lem}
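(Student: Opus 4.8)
The plan is to prove (1) first and then deduce (2) as a formal consequence. For (1), I would begin by checking that multiplication by $x_i^m$ is a monomorphism $E(-m\vec{x_i}) \to E$: since $E$ is locally free, on a small enough neighbourhood $E|_U$ is a finite direct sum of line bundles $\mathcal{O}_{\mathbb{X}}(\vec{l}_a)|_U$, and multiplication by $x_i$ is injective on each line bundle (a line bundle is torsion-free and $x_i \neq 0$), so $x_i^m$ is injective stalkwise. Writing $Q$ for the cokernel, I would note that $x_i$ is invertible in every stalk $\mathcal{O}_{\mathbb{X},x}$ with $x \neq \lambda_i$ (the vanishing locus of $x_i$ is exactly $\{\lambda_i\}$, which is how the simple sheaves $S_{i,j}$ arise), so $x_i^m$ is an isomorphism on such stalks and $\mathrm{supp}(Q) = \{\lambda_i\}$. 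Hence $Q \in \mathrm{tor}_{\lambda_i}\text{-}\mathbb{X}$, which is a connected uniserial category, and $Q$ decomposes into a finite direct sum of indecomposable uniserial objects $V_1, \cdots, V_s$.

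The heart of (1), and the step I expect to be the main obstacle, is to show that $s = r$ and that each $V_k$ has length exactly $m$; a naive induction on $\mathrm{rk}(E)$ through the line-bundle filtration and the snake lemma only produces a filtration of $Q$ by uniserials of length $m$ and leaves the splitting unclear, so I would instead pin the decomposition type down by three numerical observations. First, using a local trivialization $E|_U \cong \bigoplus_{a=1}^r \mathcal{O}_{\mathbb{X}}(\vec{l}_a)|_U$ and iterating the defining sequences $0 \to \mathcal{O}_{\mathbb{X}}((j-1)\vec{x_i}) \xrightarrow{x_i} \mathcal{O}_{\mathbb{X}}(j\vec{x_i}) \to S_{i,j} \to 0$, each line-bundle cokernel $\mathcal{O}_{\mathbb{X}}(\vec{l}_a)/x_i^m\mathcal{O}_{\mathbb{X}}(\vec{l}_a - m\vec{x_i})$ is uniserial of length $m$, so $l(Q) = rm$. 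Second, since $Q = E/x_i^m E$ is annihilated by $x_i^m$, and $x_i^t$ kills a uniserial object of $\mathrm{tor}_{\lambda_i}\text{-}\mathbb{X}$ precisely when $t$ is at least its length, each summand satisfies $l(V_k) \leq m$. Third, because $\mathrm{rad}(Q) = x_i Q$ and $x_i^m E \subseteq x_i E$, the top $Q/\mathrm{rad}(Q) \cong E/x_i E$ is the semisimple cokernel of $x_i$ on $E$, which has length $r$ by the same local computation; since in a uniserial category the number of indecomposable summands equals the length of the top, we get $s = r$. Combining, $\sum_{k=1}^r l(V_k) = rm$ with each $l(V_k) \leq m$ forces $l(V_k) = m$ for all $k$, and I set $U_j := V_j$.

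For (2), I would apply the contravariant functor $\mathrm{Hom}_{\mathbb{X}}(-,T)$ to the sequence of (1), obtaining
\[
0 \to \mathrm{Hom}_{\mathbb{X}}(\textstyle\bigoplus_j U_j, T) \to \mathrm{Hom}_{\mathbb{X}}(E,T) \xrightarrow{(x_i^m)^*} \mathrm{Hom}_{\mathbb{X}}(E(-m\vec{x_i}),T),
\]
so that $\bigoplus_j \mathrm{Hom}_{\mathbb{X}}(U_j,T) = \ker (x_i^m)^*$, and it suffices to show $(x_i^m)^* = 0$. Given $f \colon E \to T$, the naturality of multiplication by the homogeneous element $x_i^m$ gives $f \circ x_i^m = x_i^m \circ f(-m\vec{x_i})$, where the right-hand $x_i^m$ is the map $T(-m\vec{x_i}) \to T$. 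Since $T$ is supported at $\lambda_i$ with $l(T) \leq m$, the element $x_i$ acts nilpotently on $T$ with $x_i^{l(T)} = 0$, so (as $m \geq l(T)$) the map $T(-m\vec{x_i}) \to T$ vanishes and $f \circ x_i^m = 0$. Hence $(x_i^m)^*$ is the zero map, and $\mathrm{Hom}_{\mathbb{X}}(E,T) \cong \bigoplus_{j=1}^r \mathrm{Hom}_{\mathbb{X}}(U_j,T)$, as claimed.
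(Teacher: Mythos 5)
Your proof is correct, but for part (1) it takes a genuinely different route from the paper. The paper works directly at the stalk: since $E$ is locally free, $E_{\lambda_i}$ is a free graded module of rank $r$ over the graded local ring $S_{(x_i)}$, say $E_{\lambda_i} \cong \bigoplus_{j=1}^r S_{(x_i)}(\vec l_j)$, so the cokernel of $x_i^m$ splits on the nose as $\bigoplus_{j=1}^r S_{(x_i)}(\vec l_j)/x_i^m S_{(x_i)}(\vec l_j-m\vec x_i)$, and the $U_j$ are simply defined to be the sheafifications of these cyclic pieces, each visibly indecomposable, concentrated on $\lambda_i$, of length $m$. You instead identify the support of the cokernel $Q$, invoke Krull--Schmidt and uniseriality of the tube, and pin down the decomposition type numerically: $l(Q)=rm$; each summand is killed by $x_i^m$, hence has length at most $m$; and $\mathrm{top}(Q)=Q/x_iQ\cong E/x_iE$ has length $r$, so there are exactly $r$ summands. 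Your worry about the naive rank-induction is well founded (a filtration by length-$m$ uniserials does not by itself split), and your counting correctly circumvents it; what the paper's stalkwise freeness buys is that the splitting is immediate and the $U_j$ are explicit, while your argument localizes the use of freeness to the two length computations (which still rest on local triviality). For part (2) both arguments turn on the same fact, namely that $x_i^m$ annihilates $T$ because $l(T)\le m$: the paper transports the question to stalks via $\mathrm{Hom}_{\mathbb X}(E,T)\cong \mathrm{Hom}_{S_{(x_i)}}(E_{\lambda_i},T_{\lambda_i})$ (justified by the universal property of the direct limit) and factors through $E_{\lambda_i}/x_i^mE_{\lambda_i}(-m\vec x_i)$, whereas you stay global, using naturality of multiplication by $x_i^m$ to conclude $(x_i^m)^*=0$ and then left exactness of $\mathrm{Hom}_{\mathbb X}(-,T)$; your version is arguably a bit cleaner here, as it sidesteps the stalk--Hom identification entirely.
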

   
\begin{proof}
   (1) For any $\lambda \neq \lambda_i$, the element $x_i$ is invertible in $S(\bm p, \bm{\lambda})_{\lambda}$. So the multiplication $(x_i^m)_{\lambda}:E_{\lambda}(-m \vec x_i) \to E_{\lambda}$ is an isomorphism. 
   
   We write the stalk $E_{\lambda_i} \cong \oplus_{j=1}^r S_{x_i}(\vec l_j)$ as a free graded $S_{x_i}$-module. Then the cokernel of $(x_i^m)_{\lambda_i}:E_{\lambda_i}(-m \vec x_i) \to E_{\lambda_i}$ is isomorphic to 
   \[
      \bigoplus_{j=1}^r S_{x_i}(\vec l_j)/ x_i^m S_{x_i}(\vec l_j-m \vec x_i).
   \] 
   Let $U_j$ be the sheafification of $S_{x_i}(\vec l_j)/ x_i^m S_{x_i}(\vec l_j-m \vec x_i)$, then $U_j$ is an indecomposable torsion sheaf concentrated on $\lambda_i$ with length $m$. And we have the exact sequence 
   \[
      0 \to E(-m \vec x_i) \xrightarrow{x_i^{m}} E \to \oplus_{j=1}^{r}U_j \to 0.
   \]
   
   (2) For a vector bundle $E$, the stalk $E_{\lambda_i}$ is a free graded $S_{(x_i)}$-module. And $T_{\lambda_i}$ can be annihilated by the ideal $x_i^m S_{(x_i)}$. Then by the universal property of direct limit, we have
   \begin{align*}
   \mathrm{Hom}_{\mathbb X}(E, T)& \cong \mathrm{Hom}_{S_{(x_i)}}(E_{\lambda_i}, T_{\lambda_i}) \\
   & \cong \mathrm{Hom}_{S_{(x_i)}}(E_{\lambda_i}/x_i^{m}E_{\lambda_i}(- m \vec x_i), T_{\lambda_i}) \\
   & \cong \mathrm{Hom}_{\mathbb X}(\oplus_{j=1}^{r}U_j, T).
   \end{align*}
   The proof is complete.
\end{proof}

Weighted projective lines are classified by the value of $\delta(\vec \omega)$:

(1) If $\delta(\vec \omega)<0$, then $\mathbb X$ is called of \emph{domestic} type. Precisely, $\bm{p}=(p,q)$, $(2,2,n)$, $(2,3,3)$, $(2,3,4)$, $(2,3,5)$.

(2) If $\delta(\vec \omega)=0$, then $\mathbb X$ is called of \emph{tubular} type. Precisely, $\bm{p}=(2,2,2,2)$, $(3,3,3)$, $(2,4,4)$, $(2,3,6)$.

(3) If $\delta(\vec \omega)>0$, then $\mathbb X$ is called of \emph{wild} type.

It is well known that a weighted projective line of domestic type is derived equivalent to a tame hereditary algebra; see the following theorem:

\begin{thm}  [{\cite[Theorem~3.5]{lenzing2011weighted}}]\label{domestic}
Let $\mathbb X$ be a weighted projective line of domestic type, then the following properties hold: 

(1) Each indecomposable bundle is exceptional. 

(2) Let $T$ be the direct sum of all indecomposable bundles $E$ satisfying $0 \leq \mu(E) < -\delta(\omega)$. Then $T$ is a tilting object.

(3) The endomorphism ring of $T$ is isomorphic to the path algebra of an extended Dynkin quiver, which is either of type $\tilde{\mathbb A}_{p,q}$ or of type $\tilde{\mathbb D}_n$ ($n \geq 4$), $\tilde{\mathbb E}_6$, $\tilde{\mathbb E}_7$ or $\tilde{\mathbb E}_8$ with bipartite orientation.
\end{thm}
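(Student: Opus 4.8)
The three assertions rest on the slope filtration of $\mathrm{coh}\text{-}\mathbb X$ together with the Serre duality $\tau=(\vec\omega)$ recorded above. Write $\mu(F)=\deg(F)/\mathrm{rk}(F)$ for the slope of a bundle, so that $\deg(F(\vec\omega))=\deg(F)+\mathrm{rk}(F)\,\delta(\vec\omega)$ and hence $\mu(F(\vec\omega))=\mu(F)+\delta(\vec\omega)$; the domestic hypothesis $\delta(\vec\omega)<0$ means that $\tau$ strictly lowers the slope by the fixed amount $-\delta(\vec\omega)>0$. The plan is to derive (1) and (2) from a single slope-gap vanishing statement, then read off the quiver in (3) by a $\mathrm{Hom}$-computation among the chosen summands. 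For \textbf{Part (1)}, by Corollary~\ref{indec_exc} an indecomposable bundle $E$ is exceptional as soon as it is rigid, so it suffices to show $\mathrm{Ext}^1_{\mathbb X}(E,E)=0$; Serre duality turns this into $\mathrm{Hom}_{\mathbb X}(E,E(\vec\omega))=0$. Since $\mu(E(\vec\omega))=\mu(E)+\delta(\vec\omega)<\mu(E)$, the vanishing follows once one knows that a nonzero map between indecomposable bundles cannot decrease the slope by as much as $-\delta(\vec\omega)$. I would obtain this from the Harder--Narasimhan formalism on $\mathrm{coh}\text{-}\mathbb X$ together with the numerical input special to the domestic case: the symmetrized Euler (Tits) form is positive semidefinite of Euclidean type, so Riemann--Roch bounds the ranks and degrees of indecomposables and confines the slopes occurring in any HN filtration to a band of width $-\delta(\vec\omega)$.

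\textbf{Part (2).} To see that $T=\bigoplus_i E_i$, the sum of the indecomposable bundles with $0\le\mu(E_i)<-\delta(\vec\omega)$, is tilting I would verify rigidity and generation separately. For rigidity, $\mathrm{Ext}^1_{\mathbb X}(E_i,E_j)\cong D\mathrm{Hom}_{\mathbb X}(E_j,E_i(\vec\omega))$, and $\mu(E_i(\vec\omega))=\mu(E_i)+\delta(\vec\omega)<0\le\mu(E_j)$, so the slope-gap statement of Part (1) kills the $\mathrm{Hom}$ and gives $\mathrm{Ext}^1_{\mathbb X}(T,T)=0$; the window $[0,-\delta(\vec\omega))$ is chosen precisely so that $\tau$ pushes it entirely below itself. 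For generation, order the $E_i$ by increasing slope into an exceptional sequence; their classes are linearly independent by Lemma~\ref{gro_independence}, and I would check that there are exactly $\mathrm{rk}\,K_0(\mathrm{coh}\text{-}\mathbb X)$ of them because the half-open slope band meets each $\tau$-orbit of indecomposable bundles once. Equivalently, one shows $\langle T\rangle=\mathrm{coh}\text{-}\mathbb X$ by producing every simple sheaf and every line-bundle shift inside $\langle T\rangle$, using the defining sequences for $S_\lambda$ and $S_{i,j}$ together with Proposition~\ref{perp_cal}.

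\textbf{Part (3).} Granting (2), $\mathrm{End}_{\mathbb X}(T)$ is a finite-dimensional algebra derived equivalent to $\mathrm{coh}\text{-}\mathbb X$, hence hereditary (of global dimension $\le 1$, since $T$ is tilting in a hereditary category), so it is the path algebra of its quiver with no relations. The quiver has the $E_i$ as vertices, with $\dim_k\mathrm{Hom}_{\mathbb X}(E_i,E_j)$ governing the irreducible maps; I would compute these $\mathrm{Hom}$-spaces directly in each of the finitely many weight types $\bm p=(p,q),(2,2,n),(2,3,3),(2,3,4),(2,3,5)$. The slope window splits the summands into a source layer at slope $0$ and the remaining higher slopes, which forces the bipartite orientation, and the resulting shapes are exactly $\tilde{\mathbb A}_{p,q}$, $\tilde{\mathbb D}_n$, $\tilde{\mathbb E}_6$, $\tilde{\mathbb E}_7$, $\tilde{\mathbb E}_8$.

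\textbf{Main obstacle.} The crux is the slope-gap vanishing used throughout Parts (1) and (2): indecomposable bundles over $\mathbb X$ need not be semistable, so one cannot simply invoke ``morphisms respect slope order''. Making this rigorous requires the genuine numerical input of the domestic case --- the positive semidefiniteness and Euclidean classification of the Tits form via Riemann--Roch --- which is where the real content of the theorem lies; once that band estimate is in hand, the remaining steps of Parts (1)--(3) reduce to bookkeeping.
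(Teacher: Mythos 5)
First, a point of comparison: the paper does not prove this statement at all --- Theorem~\ref{domestic} is imported verbatim from \cite[Theorem~3.5]{lenzing2011weighted}, so your attempt can only be measured against the literature. There the result is obtained from the canonical tilting bundle and the derived equivalence with a tame (concealed-canonical) algebra, transferring the known structure of the module category; it is not proved by direct slope estimates.

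Your outline contains a genuine gap at exactly the point you flag as the ``main obstacle'', and moreover the quantitative form in which you propose to close it would not suffice. For Part (1) you reduce everything to a band estimate: the Harder--Narasimhan slopes of an indecomposable bundle lie in an interval of width less than $-\delta(\vec\omega)$. You never establish this; the appeal to ``Riemann--Roch plus positive semidefiniteness of the Tits form'' is a gesture, not an argument, and the true statement in the domestic case is the much stronger fact that every indecomposable bundle is semistable (band width $0$) --- a fact essentially equivalent in difficulty to the theorem itself, which is why the standard proofs route through the derived equivalence instead. Worse, even granting your band estimate, Part (2) breaks: to kill $\mathrm{Hom}_{\mathbb X}(E_j, E_i(\vec\omega))$ you need the maximal HN slope of $E_i(\vec\omega)$ to lie strictly below the minimal HN slope of $E_j$; but from $\mu(E_i(\vec\omega)) \in [\delta(\vec\omega),0)$, $\mu(E_j) \in [0,-\delta(\vec\omega))$ and band width less than $-\delta(\vec\omega)$ one only gets that the maximal slope of $E_i(\vec\omega)$ is below $-\delta(\vec\omega)$ and the minimal slope of $E_j$ is above $\delta(\vec\omega)$, intervals which can overlap around $0$. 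So rigidity of $T$ genuinely requires semistability, not your band. Two further assertions are also used without proof and are themselves consequences of the structure theory you are trying to establish: that only finitely many indecomposable bundles have slope in $[0,-\delta(\vec\omega))$, and that there are exactly $\mathrm{rk}\,K_0(\mathrm{coh}\text{-}\mathbb X)$ many $\tau$-orbits of indecomposable bundles (equivalently, that the AR quiver of $\mathrm{vect}(\mathbb X)$ is of the form $\mathbb Z\Delta$ with $\Delta$ extended Dynkin). Finally, in Part (3) your ``two slope layers force bipartite orientation'' heuristic is too coarse: the theorem asserts bipartite orientation only in the $\tilde{\mathbb D}$ and $\tilde{\mathbb E}$ cases, not for $\tilde{\mathbb A}_{p,q}$, and the window $[0,-\delta(\vec\omega))$ contains more than two slope values in general. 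In short, your plan correctly locates where the content of the theorem lies, but what it supplies at that location is insufficient.
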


\section{The $\vec c$ - invariant wide subcategories of $\mathrm{coh} \text{-} \mathbb X$}

Let $\mathbb X$ be a weighted projective line of any type, we consider wide subcategories of $\mathrm{coh} \text{-} \mathbb X$ containing both a vector bundle and a non-exceptional indecomposable torsion sheaf. They are exactly socalled $\vec c$-invariant wide subcategories which containing a vector bundle. And each of them is the perpendicular category of a torsion exceptional sequence; see Theorem~\ref{c_inv}.

\begin{defn}
A wide subcategory $\mathcal W$ of $\mathrm{coh} \text{-} \mathbb X$ is called $\vec c$-invariant, if $\mathcal W = \mathcal W(\vec c)$, where $\mathcal W(\vec c):= \{F(\vec c) \ | \ F \in \mathcal W  \}$. We denote by $\mathrm{Wid}(\mathrm{coh} \text{-} \mathbb X)^{\vec c}$ the set of $\vec c$-invariant wide subcategories of $\mathrm{coh} \text{-} \mathbb X$.
\end{defn}

Note that for a torsion sheaf $T$, we have $T \cong T(\vec c)$. So a wide subcategory formed by torsion sheaves is always $\vec c$-invariant. The next proposition states that a non-exceptional indecomposable torsion sheaf will force a wide subcategory to be $\vec c$-invariant.

\begin{prop} \label{contain_non_exc}
Let $\mathcal W$ be a wide subcategory of $\mathrm{coh} \text{-} \mathbb X$ containing a vector bundle. Then $\mathcal W$ contain an indecomposable non-exceptional torsion sheaf if and only if $\mathcal W$ is $\vec c$-invariant.
\end{prop}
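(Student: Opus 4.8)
The plan is to prove both implications after reducing $\vec c$-invariance to a statement about vector bundles. Since the canonical sequence $0 \to \mathrm tF \to F \to F/\mathrm tF \to 0$ splits with $F/\mathrm tF$ a bundle, since $\mathrm tF \cong \mathrm tF(\vec c)$ for any torsion sheaf, and since a wide subcategory is closed under direct summands (a summand is the kernel of an idempotent), the condition $\mathcal W = \mathcal W(\vec c)$ is equivalent to: $V(\vec c), V(-\vec c) \in \mathcal W$ for every vector bundle $V \in \mathcal W$. The two tools I will use repeatedly are the torsion cokernels of degree-$\vec c$ sections: at an ordinary point $\lambda$ one has $0 \to V \xrightarrow{u_\lambda} V(\vec c) \to S_\lambda^{\oplus \mathrm{rk}(V)} \to 0$, while at an exceptional point $\lambda_i$ Lemma~\ref{bundle_tor}(1) with $m = p_i$ gives $0 \to V(-\vec c) \xrightarrow{x_i^{p_i}} V \to \bigoplus_{j=1}^{\mathrm{rk}(V)} U_j \to 0$ with each $U_j$ indecomposable torsion at $\lambda_i$ of length $p_i$.

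For the implication $\vec c\text{-invariant} \Rightarrow \text{contains a non-exceptional torsion sheaf}$, I would pick any vector bundle $E \in \mathcal W$ and any ordinary point $\lambda$. By $\vec c$-invariance $E(\vec c) \in \mathcal W$, so the cokernel $S_\lambda^{\oplus \mathrm{rk}(E)}$ of $u_\lambda$, and hence $S_\lambda$, lies in $\mathcal W$. Since $\tau S_\lambda = S_\lambda(\vec\omega) \cong S_\lambda$, Serre duality gives $\mathrm{Ext}^1_{\mathbb X}(S_\lambda, S_\lambda) \cong \mathrm D\,\mathrm{End}_{\mathbb X}(S_\lambda) \ne 0$, so $S_\lambda$ is a non-exceptional indecomposable torsion sheaf. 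This direction is immediate.

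For the converse, suppose $\mathcal W$ contains a bundle $E$ and a non-exceptional indecomposable torsion sheaf $U$ supported at a point $x_0$, whose tube is $\mathrm{tor}_{x_0}\text{-}\mathbb X \simeq \mathcal U_p$. By Lemma~\ref{wide_length} I may replace $U$ by an indecomposable $V_0 \in \mathcal W \cap \mathrm{tor}_{x_0}\text{-}\mathbb X$ with $l(V_0) = p$, so that $\mathrm{CF}(V_0)$ is the full set of simples at $x_0$ (cf.\ Proposition~\ref{uniserial}). If $x_0$ is ordinary then $p=1$ and $V_0$ is already an ordinary simple; in general my aim is to show $\mathrm{tor}_{\lambda_i}\text{-}\mathbb X \subseteq \mathcal W$, i.e. that $\mathcal W$ contains every simple at $x_0 = \lambda_i$. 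Granting this, the cokernel $\bigoplus_j U_j$ of $x_i^{p_i}$ on $E$ lies in $\mathcal W$, so $E(-\vec c) = \ker(x_i^{p_i}) \in \mathcal W$ and $E(\vec c) \in \mathcal W$ as the middle term of the $\vec c$-shifted sequence (an extension of $\bigoplus_j U_j(\vec c) \cong \bigoplus_j U_j$ by $E$). The cokernel of $u_\mu\colon E \to E(\vec c)$ at an ordinary point $\mu$ then produces an ordinary simple $S_\mu \in \mathcal W$, and the ordinary-point sequence applied to an arbitrary bundle $V \in \mathcal W$ yields $V(\vec c), V(-\vec c) \in \mathcal W$, giving full $\vec c$-invariance.

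The crux, and the step I expect to be the main obstacle, is showing that $\mathcal W$ contains all simples at $\lambda_i$. A wide subcategory of a single tube need not be $\tau$-stable — for instance $\langle S^{[p]}\rangle$ contains only one of the $p$ indecomposables of length $p$ — so $V_0 \in \mathcal W$ alone does not suffice, and the bundle $E$ must be used to ``rotate'' within the tube. The mechanism I would exploit is that, by Lemma~\ref{bundle_tor}(2), every length-$p_i$ indecomposable at $\lambda_i$ receives a nonzero map from $E$, and that a nonzero map $E \to V_0$ with proper image (which occurs exactly when the local top of $E$ differs from $\mathrm{top}(V_0)$) has as its image a shorter torsion sheaf in $\mathcal W$ with socle $\mathrm{soc}(V_0)$; peeling off composition factors and feeding the shorter sheaves back through $E$ should yield each simple in turn. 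Making this bootstrap precise — that $V_0$ together with the maps out of $E$ generates all of $\mathrm{tor}_{\lambda_i}\text{-}\mathbb X$ under kernels and cokernels — is the technical heart of the proof. An alternative route I would keep in reserve is to realise $E(\vec c)$ directly as an iterated universal (Serre-dual) extension: since $\mathrm{Ext}^1_{\mathbb X}(V_0,E) \cong \mathrm D\,\mathrm{Hom}_{\mathbb X}(E,\tau V_0) \ne 0$ and $[E(\vec c)] - [E] = \mathrm{rk}(E)\,[V_0]$ in $K_0$, forming $\mathrm{rk}(E)$ successive universal extensions of $E$ by $V_0$ is a natural candidate to reconstruct $E(\vec c)$ inside $\mathcal W$, the delicate point being to identify the result with $E(\vec c)$ on the nose.
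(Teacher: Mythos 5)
Your reduction of $\vec c$-invariance to bundle shifts and your easy direction ($\vec c$-invariant $\Rightarrow$ contains a non-exceptional torsion sheaf, via the ordinary simple $S_\lambda$ and Serre duality) are both correct and in the spirit of the paper. The hard direction, however, has a genuine gap, located precisely at the step you yourself flag as the crux: your intermediate goal of showing $\mathrm{tor}_{\lambda_i}\text{-}\mathbb X \subseteq \mathcal W$, i.e.\ that $\mathcal W$ contains every simple at $\lambda_i$, is not merely the main obstacle — it is false. Take $p = p_i \geq 2$, let $S$ be a simple concentrated at $\lambda_i$, and let $\mathcal W = (S, \tau S, \ldots, \tau^{p-2}S)^{\perp}$ in $\mathrm{coh}\text{-}\mathbb X$. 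This is a wide subcategory containing vector bundles and containing the non-exceptional indecomposable $S^{[p]}$, and it is $\vec c$-invariant (Corollary~\ref{cor_c_inv}); yet by Lemma~\ref{U0_perp}(2) its intersection with the tube at $\lambda_i$ is $\mathrm{add}\Set{S^{[lp]}}{l \geq 1}$, which contains no simple at all. So the proposition's conclusion holds while your intermediate claim fails, and no bootstrap can prove it. Indeed your proposed mechanism visibly stalls on this example: by Lemma~\ref{bundle_tor}(2) every map from a bundle $E \in \mathcal W$ to $V_0 = S^{[p]}$ factors through copies of the length-$p$ cokernel summands $U_j$, which here are forced to be isomorphic to $S^{[p]}$, and every nonzero endomorphism of $S^{[p]}$ is invertible (its image is uniserial with top $S$ and socle $\tau^{-1}S$, hence of length at least $p$). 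So every nonzero map $E \to V_0$ is an epimorphism, no shorter torsion sheaf is ever produced, and no simple ever enters $\mathcal W$.

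The paper's proof avoids simples entirely and shows directly that each $U_j$ lies in $\mathcal W$: both $U_j$ and $V_0$ have length $p_i$ with full composition factor set, so $\mathrm{Hom}_{\mathbb X}(U_j, V_0) \neq 0$ by Lemma~\ref{uni_hom}; for a nonzero $f: U_j \to V_0$, a computation in the rank-$p_i$ tube using $\tau^{p_i} = \mathrm{id}$ shows that $\ker f$ and $\mathrm{coker} f$ are uniserial of the same length with the same socle, hence isomorphic, which yields a short exact sequence $0 \to \mathrm{Coker} f \to U_j \to \mathrm{Im} f \to 0$ whose outer terms lie in $\mathcal W$ (being a cokernel and an image of maps between objects of $\mathcal W$). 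Closure under extensions then gives $U_j \in \mathcal W$, hence $E(\vec c) \in \mathcal W$ from the sequence $0 \to E \to E(\vec c) \to \oplus_j U_j \to 0$, and symmetrically $E(-\vec c) \in \mathcal W$; your own reduction finishes from there. Your reserve route — rebuilding $E(\vec c)$ as an iterated universal extension of $E$ by $V_0$ — founders on exactly the identification you call delicate: nothing forces the middle term of a (semi-)universal extension to be $E(\vec c)$, or even to be a bundle, and the $K_0$ equality $[E(\vec c)] - [E] = \mathrm{rk}(E)[V_0]$ does not determine the object. As it stands, the proposal does not contain a workable proof of the main implication.
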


\begin{proof}
   $\Rightarrow$: Take a vector bundle $E$ in $\mathcal W$, by Lemma~\ref{bundle_tor}(1), we have exact sequences
   \begin{subequations} \label{ecu}
      \begin{gather}
         0 \to E \xrightarrow{x_i^{p_i}} E(\vec c) \to \oplus_{j=1}^{\mathrm{rk}(E)}U_j \to 0  \label{exact_weighted} \\
         0 \to E \xrightarrow{x_2^{p_2}- \lambda x_1^{p_1}} E(\vec c) \to \mathrm{rk}(E)S_{\lambda} \to 0 \label{exact_ordinary}
         \end{gather}
   \end{subequations}
   where $U_1,U_2,\cdots$ are indecomposable sheaves concentrated on $\lambda_i$ of length $p_i$, and $S_{\lambda}$ is the simple sheaf concentrated on an ordinary point $\lambda$. 
   
   Let $U \in \mathcal W$ be an indecomposable non-exceptional torsion sheaf. If $U$ is concentrated on the point $\lambda_i$, by Lemma~\ref{uni_exc} and Lemma~\ref{wide_length}, we can assume that $l(U)=p_i$. By Lemma~\ref{uni_hom}, we have $\mathrm{Hom}_{\mathbb X}(U_j,U)\neq 0$ for any $j=1,2,\cdots, \mathrm{rk}(E)$. Take a nonzero morphism 
   $f: U_j \to U$. Since $\mathcal W$ is wide, we have $\mathrm{Im}f \in \mathcal W$ and $\mathrm{Coker}f \in \mathcal W$. By the short exact sequence 
   \[
      0 \to \mathrm{Coker}f \to U_j \to \mathrm{Im}f \to 0,
   \]
   we have $U_j \in \mathcal W$ for any $j=1,2,\cdots, \mathrm{rk}(E)$. From exact sequence~(\ref{exact_weighted}), we obtain that $E(\vec c) \in \mathcal W$. 
   
   If $U$ is concentrated on an ordinary point $\lambda$, by Lemma~\ref{wide_length}, we can assume that $U \cong S_{\lambda}$. From the exact sequence (\ref{exact_ordinary}), we obtain $E(\vec c) \in \mathcal W$. 

   $\Leftarrow$: It is obvious by the exact squences (\ref{ecu}) and Lemma~\ref{uni_exc}.
\end{proof}

We have the classification theorem of $\vec c$-invariant wide subcategories: they are completely determined by their ``stalks''.

\begin{thm} \label{c_inv}
Let $\mathbb X$ be a weighted projective line with $n$ exceptional points, and let $\mathcal W$ be a $\vec c$ -invariant wide subcategory of $\mathrm{coh} \text{-} \mathbb X$ containing a vector bundle. Then

(1) For any $i=1,2,\cdots, n$, there exists an exceptional sequence $E^i=(E_1^i,\cdots,E_{r_i}^i)$ in $\mathrm{tor}_{\lambda_i} \text{-} \mathbb X$, such that
\[
   \mathcal W \cap \mathrm{tor}_{\lambda_i} \text{-} \mathbb X =(E^i)^{\perp} \cap \mathrm{tor}_{\lambda_i} \text{-} \mathbb X.
\]

(2) $\mathcal W=(E^1,\cdots,E^n)^{\perp}$, where $E^1, \cdots , E^n$ are defined in (1).
\end{thm}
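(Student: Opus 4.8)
The plan is to first identify each intersection $\mathcal V_i:=\mathcal W\cap\mathrm{tor}_{\lambda_i}\text{-}\mathbb X$ as a wide subcategory of the tube $\mathrm{tor}_{\lambda_i}\text{-}\mathbb X\cong\mathcal U_{p_i}$ and to recognise it through the duality of Proposition~\ref{thick_uniserial}. Fixing a vector bundle $E\in\mathcal W$, $\vec c$-invariance gives $E(\vec c)\in\mathcal W$, and the exact sequence (\ref{exact_weighted}) exhibits $\bigoplus_j U_j=\mathrm{coker}(E\xrightarrow{x_i^{p_i}}E(\vec c))$ as a cokernel of a morphism in $\mathcal W$, so each length-$p_i$ sheaf $U_j$ lies in $\mathcal V_i$. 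Thus $\mathcal V_i$ contains an indecomposable of length $p_i$, whence $\mathcal V_i\in\mathrm{NExc}(\mathcal U_{p_i})$ by Proposition~\ref{uniserial}(3). Proposition~\ref{thick_uniserial} then yields $\prescript{\perp}{}{\mathcal V_i}\in\mathrm{Exc}(\mathcal U_{p_i})$ with $\mathcal V_i=(\prescript{\perp}{}{\mathcal V_i})^{\perp_{\mathcal U_{p_i}}}$; writing $\prescript{\perp}{}{\mathcal V_i}=\langle E^i\rangle$ for an exceptional sequence $E^i=(E_1^i,\dots,E_{r_i}^i)$ supported at $\lambda_i$ gives part (1), since then $(E^i)^{\perp}\cap\mathrm{tor}_{\lambda_i}\text{-}\mathbb X=(E^i)^{\perp_{\mathcal U_{p_i}}}=\mathcal V_i$. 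As objects supported at distinct points have no Homs or Exts between them, the concatenation $\mathcal E:=(E^1,\dots,E^n)$ is again an exceptional sequence, so $\mathcal E^{\perp}=\langle\mathcal E\rangle^{\perp}$ is wide; it is $\vec c$-invariant because each $E_k^i$ is torsion, so $\mathrm{Hom}$ and $\mathrm{Ext}^1$ out of it are insensitive to the shift $(\vec c)$.

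For the inclusion $\mathcal W\subseteq\mathcal E^{\perp}$ the plan is to show $\langle\mathcal E\rangle\subseteq\prescript{\perp}{}{\mathcal W}$; since $\prescript{\perp}{}{\mathcal W}$ is wide this reduces to checking $\mathrm{Hom}_{\mathbb X}(E_k^i,F)=0=\mathrm{Ext}^1_{\mathbb X}(E_k^i,F)$ for all $F\in\mathcal W$. Writing $F=\mathrm tF\oplus F/\mathrm tF$, the torsion part decomposes by support and only its summand in $\mathcal V_i$ interacts with $E_k^i$, where both groups vanish as $E_k^i\in\prescript{\perp}{}{\mathcal V_i}$; and $\mathrm{Hom}(E_k^i,F/\mathrm tF)=0$ because a torsion sheaf admits no nonzero map to a bundle. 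The one genuinely nontrivial term is $\mathrm{Ext}^1(E_k^i,G)$ for the bundle $G=F/\mathrm tF$, and here I would use Serre duality together with Lemma~\ref{bundle_tor}. Since $E_k^i$ is exceptional, $l(E_k^i)<p_i$ by Lemma~\ref{uni_exc}, so taking $m=p_i$ in Lemma~\ref{bundle_tor}(1)--(2) (whose sheaves $U_j$ now lie in $\mathcal V_i$, as $G(-\vec c),G\in\mathcal W$) gives
\[
   \mathrm{Ext}^1_{\mathbb X}(E_k^i,G)\cong D\,\mathrm{Hom}_{\mathbb X}(G,E_k^i(\vec\omega))\cong\bigoplus_j D\,\mathrm{Hom}_{\mathbb X}(U_j,E_k^i(\vec\omega))\cong\bigoplus_j \mathrm{Ext}^1_{\mathbb X}(E_k^i,U_j),
\]
which vanishes because $U_j\in\mathcal V_i$ and $E_k^i\in\prescript{\perp}{}{\mathcal V_i}$. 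Hence $E_k^i\in\prescript{\perp}{}{\mathcal W}$ and $\mathcal W\subseteq\langle\mathcal E\rangle^{\perp}=\mathcal E^{\perp}$.

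For the reverse inclusion I would first record that $\mathcal W$ and $\mathcal E^{\perp}$ contain exactly the same torsion sheaves: on each exceptional tube both meet it in $\mathcal V_i$ by part (1), while on an ordinary tube $\mathcal E^{\perp}$ imposes no condition and $\mathcal W$ contains the whole tube, since (\ref{exact_ordinary}) forces every ordinary simple $S_\lambda$ into $\mathcal W$ and hence all of $\mathrm{tor}_\lambda\text{-}\mathbb X=\langle S_\lambda\rangle$. Given $F\in\mathcal E^{\perp}$, its torsion part lies in $\mathcal E^{\perp}\cap\mathrm{coh}_0\text{-}\mathbb X=\mathcal W\cap\mathrm{coh}_0\text{-}\mathbb X\subseteq\mathcal W$, so the remaining task is to place the bundle $F/\mathrm tF$ in $\mathcal W$; this is the main obstacle, namely showing that $\mathcal W$ already contains \emph{every} vector bundle of $\mathcal E^{\perp}$. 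I would use that $\mathcal E^{\perp}$, being the perpendicular category of a torsion exceptional sequence, is equivalent (by Geigle--Lenzing) to $\mathrm{coh}\text{-}\mathbb X'$ for a weighted projective line $\mathbb X'$ whose torsion sheaves are precisely $\mathcal E^{\perp}\cap\mathrm{coh}_0\text{-}\mathbb X\subseteq\mathcal W$; under this identification $\mathcal W$ is a wide subcategory of $\mathrm{coh}\text{-}\mathbb X'$ containing all torsion sheaves and a bundle $G_0$. To conclude I would first manufacture a line bundle in $\mathcal W$: choosing a line-bundle quotient $G_0\twoheadrightarrow L$ and an injection $L\hookrightarrow G_0(N\vec c)$ for $N\gg0$ (with $G_0(N\vec c)\in\mathcal W$ by $\vec c$-invariance), the composite is a morphism between objects of $\mathcal W$ with image $L$, so $L\in\mathcal W$. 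Then, since all torsion lies in $\mathcal W$, extension- and kernel-closedness propagate membership along inclusions of line bundles with torsion cokernel, and any two line bundles embed into a common one; so all line bundles of $\mathbb X'$, and then all bundles via their line-bundle filtrations, lie in $\mathcal W$. This gives $\mathcal E^{\perp}\subseteq\mathcal W$, finishing the proof.
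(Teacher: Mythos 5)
Your part (1) and the inclusion $\mathcal W \subseteq (E^1,\cdots,E^n)^{\perp}$ match the paper essentially step for step: the sequence \eqref{exact_weighted} plus Proposition~\ref{uniserial}(3) and Proposition~\ref{thick_uniserial} identify each $\mathcal W \cap \mathrm{tor}_{\lambda_i}\text{-}\mathbb X$, and the vanishing $\mathrm{Ext}^1_{\mathbb X}(E^i_k,G)=0$ for a bundle $G \in \mathcal W$ is obtained in the paper by exactly your computation: Serre duality, $l(E^i_k)<p_i$ from Lemma~\ref{uni_exc}, and Lemma~\ref{bundle_tor}(2) with $m=p_i$. Where you genuinely diverge is the reverse inclusion. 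The paper stays inside $\mathrm{coh}\text{-}\mathbb X$ and computes the \emph{left} perpendicular: since every ordinary simple $S_\lambda$ lies in $\mathcal W$, no bundle and no ordinary torsion sheaf can lie in $\prescript{\perp}{}{\mathcal W}$, so $\prescript{\perp}{}{\mathcal W}$ decomposes along the exceptional tubes, and tube-wise the Ext-orthogonal pair $(\langle E^i \rangle,(E^i)^{\perp}\cap\mathrm{tor}_{\lambda_i}\text{-}\mathbb X)$ from Proposition~\ref{perp_cal}(2) forces $\prescript{\perp}{}{\mathcal W}\cap\mathrm{tor}_{\lambda_i}\text{-}\mathbb X\subseteq\langle E^i\rangle$, whence $\prescript{\perp}{}{\mathcal W}\subseteq\langle E^1,\cdots,E^n\rangle$. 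You instead bootstrap all of $(E^1,\cdots,E^n)^{\perp}$ into $\mathcal W$ directly: pass to the weight-reduced category, produce a line bundle in $\mathcal W$ as an image, propagate membership along monomorphisms with torsion cokernel, and climb line-bundle filtrations. Your route is heavier, invoking the Geigle--Lenzing reduction that the paper only needs later; but it buys something real: the paper's reduction ``we only need to prove $\prescript{\perp}{}{\mathcal W}\subseteq\langle E^1,\cdots,E^n\rangle$'' tacitly uses the bireflexivity $\mathcal W=(\prescript{\perp}{}{\mathcal W})^{\perp}$, which is not justified there for a general wide $\mathcal W$, whereas your argument proves the containment $(E^1,\cdots,E^n)^{\perp}\subseteq\mathcal W$ outright.

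Three points in your sketch need patching, though none is fatal. First, $(E^1,\cdots,E^n)^{\perp}\simeq\mathrm{coh}\text{-}\mathbb X'$ is not quite right: by the paper's own proposition at the end of Section~6, already the perpendicular of a single exceptional torsion sheaf is $\mathrm{coh}\text{-}\mathbb X'\coprod k\vec{\mathbb A}_{l(E)-1}\text{-}\mathrm{mod}$; the extra components consist of finite-length objects, hence lie in $(E^1,\cdots,E^n)^{\perp}\cap\mathrm{coh}_0\text{-}\mathbb X\subseteq\mathcal W$, so your argument survives, but the equivalence should be stated with this finite-length part. Second, the matching ``torsion of $\mathbb X'$ $=$ $\mathbb X$-torsion in the perpendicular category'' deserves a sentence: monomorphisms in $(E^1,\cdots,E^n)^{\perp}$ agree with those in $\mathrm{coh}\text{-}\mathbb X$ (closure under kernels), and a bundle admits infinite descending chains inside the perpendicular category (kernels of surjections onto ordinary simples), so finite length is intrinsic and bundles correspond to bundles. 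Third, all twists must be by $\vec c=\vec c_{\mathbb X}$, which is an autoequivalence of $(E^1,\cdots,E^n)^{\perp}$ because the $E^i_k$ are torsion; $\mathcal W$ is not known to be invariant under the canonical twist of $\mathbb X'$, so your parenthetical ``$G_0(N\vec c)\in\mathcal W$ by $\vec c$-invariance'' is only valid on this reading. With $\vec c_{\mathbb X}$, the embedding $L\hookrightarrow G_0(N\vec c)$ for $N\gg0$ follows from the Euler form ($\mathrm{Hom}$ grows linearly while $\mathrm{Ext}^1_{\mathbb X}(L,G_0(N\vec c))\cong D\mathrm{Hom}_{\mathbb X}(G_0(N\vec c),L(\vec\omega))$ vanishes eventually) together with the fact that a nonzero map from a line bundle into a torsion-free sheaf is injective.
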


\begin{proof}
(1) By the exact sequence (\ref{exact_weighted}) and Proposition~\ref{uniserial}(3), we have 
\[
   \mathcal W \cap \mathrm{tor}_{\lambda_i} \text{-} \mathbb X \in \mathrm{Nexc}(\mathrm{tor}_{\lambda_i} \text{-} \mathbb X).
\]
And then it follows from Proposition~\ref{thick_uniserial}.

(2) First, we claim that $\mathcal W \subseteq (E^1,\cdots,E^n)^{\perp}$. By Lemma~\ref{bundle_tor}(1), for any indecomposable bundle $F \in \mathcal W$, we have a short exact sequence
\[
0 \to F \xrightarrow{x_i^{p_i}} F(\vec{c}) \to \oplus_{j=1}^{\mathrm{rk}(E)}U_j \to 0,
\]
where $U_1,U_2, \cdots$ are indecomposable sheaves concentrated on $\lambda_i$ of length $p_i$. Since $\mathcal W$ is $\vec c$-invariant and wide, we have that $U_j \in \mathcal W$. So $U_j \in \mathcal W \cap \mathrm{tor}_{\lambda_i} \text{-} \mathbb X \subseteq (E^i)^\perp$. By Serre duality, 
\[
   \mathrm{Hom}_{\mathbb X}(U_j, E_k^i(\vec{\omega}))=D\mathrm{Ext}_{\mathbb X}^1(E_k^i, U_j)=0
\]
for $k=1, \cdots, r_i$. By Lemma~\ref{uni_exc}, we have $l(E_k^i)<p_i$ since $E_k^i$ is exceptional. And then by Lemma~\ref{bundle_tor}(2), we have 
\[
   \mathrm{Hom}_{\mathbb X}(F, E_k^i(\vec{\omega}))=\mathrm{Hom}_{\mathbb X}(\oplus_{j=1}^{\mathrm{rk}(E)}U_j, E_k^i(\vec{\omega}))=0,
\]
which implies that $\mathrm{Ext}_{\mathbb X}^1(E_k^i, F)=0$. So $F \subseteq (E^i)^{\perp}$, and then $\mathcal W \subseteq (E^1,\cdots,E^n)^{\perp}$. 

For the other direction $\mathcal W \supseteq (E^1,\cdots,E^n)^{\perp}$, we only need to prove that $\prescript{\perp}{}{\mathcal W} \subseteq \langle E^1,\cdots,E^n \rangle $. By Proposition~\ref{perp_cal}(2), $(\langle E^i \rangle, (E^i)^{\perp} \cap \mathrm{tor}_{\lambda_i} \text{-} \mathbb X)$ is an extension orthogonal pair for $\mathrm{tor}_{\lambda_i} \text{-} \mathbb X$, so we have
\begin{align*}
\prescript{\perp}{}{\mathcal W} \cap \mathrm{tor}_{\lambda_i} \text{-} \mathbb X & \subseteq \prescript{\perp}{}{(\mathcal W \cap \mathrm{tor}_{\lambda_i} \text{-} \mathbb X)} \cap \mathrm{tor}_{\lambda_i} \text{-} \mathbb X \\
&=\prescript{\perp}{}{((E^i)^{\perp} \cap \mathrm{tor}_{\lambda_i} \text{-} \mathbb X)} \cap \mathrm{tor}_{\lambda_i} \text{-} \mathbb X \\
&=\langle E^i \rangle.
\end{align*}
By Lemma~\ref{bundle_tor}(1), for any ordinary point $\lambda$, the simple sheaf $S_\lambda$ is in $\mathcal W$. For any vector bundle $B$ and any torsion sheaf $U_\lambda$ concentrated on $\lambda$, we have $\mathrm{Hom}_{\mathbb X}(B,S_\lambda) \neq 0$ and $\mathrm{Hom}_{\mathbb X}(U_\lambda,S_\lambda) \neq 0$. So $B \notin \prescript{\perp}{}{\mathcal W}$, $U_\lambda \notin \prescript{\perp}{}{\mathcal W}$. Then any indecomposable sheaf in $\prescript{\perp}{}{\mathcal W}$ is a torsion sheaf concentrated on an exceptional point. Finally, we obtain
\[
   \prescript{\perp}{}{\mathcal W}=\coprod_{i=1}^n \prescript{\perp}{}{\mathcal W} \cap \mathrm{tor}_{\lambda_i} \text{-} \mathbb X \subseteq \coprod_{i=1}^n \langle E^i \rangle =\langle E^1, \cdots , E^n \rangle.
\]
The proof is complete.
\end{proof}

\begin{cor} \label{cor_c_inv}
   The set of all $\vec c$-invariant wide subcategories of $\mathrm{coh} \text{-} \mathbb X$ can be written as the following disjoint union of two sets:
   \[
      \mathrm{Wid}(\mathrm{coh} \text{-} \mathbb X)^{\vec c}= \mathrm{Wid}(\mathrm{coh}_0 \text{-} \mathbb X) \sqcup \{ \mathcal W^{\perp} | \mathcal W \in \mathrm{Exc}(\mathrm{coh}_0 \text{-} \mathbb X) \}.
   \]
\end{cor}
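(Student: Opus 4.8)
The plan is to prove the asserted equality as a union of two inclusions and then verify the disjointness. Throughout I would write $\mathcal T:=\mathrm{coh}_0\text{-}\mathbb X$ for the torsion subcategory, and freely use that $\mathcal T$ is itself wide, that perpendicular categories are wide, and that $\mathcal X^\perp=\langle \mathcal X\rangle^\perp$.

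First I would establish a dichotomy for an arbitrary $\mathcal W\in\mathrm{Wid}(\mathrm{coh}\text{-}\mathbb X)^{\vec c}$. Since the canonical sequence $0\to \mathrm tF\to F\to F/\mathrm tF\to 0$ splits and a wide subcategory is closed under direct summands (a summand is the kernel of an idempotent endomorphism, and $\mathcal W$ is closed under kernels), $\mathcal W$ either contains a nonzero vector bundle or is contained in $\mathcal T$. In the latter case $\mathcal W\in\mathrm{Wid}(\mathcal T)$, because kernels, cokernels and extensions of torsion sheaves computed in $\mathrm{coh}\text{-}\mathbb X$ remain in $\mathcal T$. In the former case, Theorem~\ref{c_inv} produces torsion exceptional sequences $E^i$ in $\mathrm{tor}_{\lambda_i}\text{-}\mathbb X$ with $\mathcal W=(E^1,\cdots,E^n)^\perp$; as tubes at distinct points are mutually Hom- and Ext-orthogonal, the concatenation $(E^1,\cdots,E^n)$ is an exceptional sequence in $\mathcal T$, so $\mathcal V:=\langle E^1,\cdots,E^n\rangle\in\mathrm{Exc}(\mathcal T)$ and $\mathcal W=\mathcal V^\perp$. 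This shows the left-hand side is contained in the union of the two sets on the right.

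For the reverse inclusion I would check that each piece lands in $\mathrm{Wid}(\mathrm{coh}\text{-}\mathbb X)^{\vec c}$. Any $\mathcal W\in\mathrm{Wid}(\mathcal T)$ is $\vec c$-invariant since every torsion sheaf $T$ satisfies $T\cong T(\vec c)$. For $\mathcal V\in\mathrm{Exc}(\mathcal T)$ the category $\mathcal V^\perp$ is wide, and it is $\vec c$-invariant: given $F\in\mathcal V^\perp$ and torsion $V\in\mathcal V$ we have $V(-\vec c)\cong V$, so the shift equivalence gives $\mathrm{Hom}_{\mathbb X}(V,F(\vec c))\cong\mathrm{Hom}_{\mathbb X}(V,F)=0$ and likewise for $\mathrm{Ext}^1$, whence $F(\vec c)\in\mathcal V^\perp$.

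The main point, and the step I expect to carry the weight, is disjointness, which reduces to showing that $\mathcal V^\perp$ always contains a vector bundle when $\mathcal V\in\mathrm{Exc}(\mathcal T)$, so that it can never coincide with a subcategory of $\mathcal T$. I would argue as follows: by Proposition~\ref{exc_rigid}, $\mathcal V=\langle X\rangle$ for a rigid object $X$, so Proposition~\ref{perp_cal} makes $(\mathcal V,\mathcal V^\perp)$ a complete Ext-orthogonal pair, and completeness yields $\langle\mathcal V,\mathcal V^\perp\rangle=\mathrm{coh}\text{-}\mathbb X$. Since $\mathcal V\subseteq\mathcal T$ and $\mathcal T$ is closed under kernels, cokernels and extensions, if $\mathcal V^\perp$ were also contained in $\mathcal T$ then $\langle\mathcal V,\mathcal V^\perp\rangle\subseteq\mathcal T$, contradicting $\mathcal O_{\mathbb X}\in\mathrm{coh}\text{-}\mathbb X\setminus\mathcal T$. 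Hence $\mathcal V^\perp$ contains a non-torsion sheaf, and by the splitting argument of the first paragraph a vector bundle. This separates the two families and completes the proof.
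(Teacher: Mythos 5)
Your proposal is correct and takes essentially the same route as the paper: the hard inclusion via Theorem~\ref{c_inv}(2) (a $\vec c$-invariant wide subcategory not inside $\mathrm{coh}_0 \text{-} \mathbb X$ contains a vector bundle and is the perpendicular of a torsion exceptional sequence), the easy reverse inclusions via $T \cong T(\vec c)$ for torsion sheaves, and disjointness via the complete Ext-orthogonal pair $(\mathcal W, \mathcal W^{\perp})$ coming from Propositions~\ref{exc_rigid} and~\ref{perp_cal}. Your extra details (the splitting dichotomy and the explicit contradiction $\mathrm{coh} \text{-} \mathbb X = \langle \mathcal V, \mathcal V^{\perp} \rangle \subseteq \mathrm{coh}_0 \text{-} \mathbb X$) simply make explicit steps the paper leaves implicit.
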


\begin{proof}
   By Proposition~\ref{perp_cal}(2), for any $\mathcal W \in \mathrm{Exc}(\mathrm{coh}_0 \text{-} \mathbb X)$, we have $(\mathcal W, \mathcal W^{\perp})$ is a complete Ext-orthogonal pair in $\mathrm{coh} \text{-} \mathbb X$. So there must exist a vector bundle in $\mathcal W^\perp$, hence the union is disjoint.
   
   Obviously $\mathrm{Wid}(\mathrm{coh}_0 \text{-} \mathbb X) \subseteq \mathrm{Wid}(\mathrm{coh} \text{-} \mathbb X)^{\vec c}$. Since the perpendicular category of a $\vec c$-invariant subcategory is $\vec c$-invariant, we have 
   \[
      \{ \mathcal W^{\perp} | \mathcal W \in \mathrm{Exc}(\mathrm{coh}_0 \text{-} \mathbb X) \} \subseteq \mathrm{Wid}(\mathrm{coh} \text{-} \mathbb X)^{\vec c}.
   \]
   
   For any $\mathcal W \in \mathrm{Wid}(\mathrm{coh} \text{-} \mathbb X)^{\vec c} \setminus  \mathrm{Wid}(\mathrm{coh}_0 \text{-} \mathbb X)$, by Theorem~\ref{c_inv}(2), we have that $\mathcal W=(E^1,\cdots,E^n)^{\perp}$, where each $E^i$ is an exceptional sequence in $\mathrm{tor}_{\lambda_i} \text{-} \mathbb X$. So $\mathrm{Wid}(\mathrm{coh} \text{-} \mathbb X)^{\vec c} \subseteq \mathrm{Wid}(\mathrm{coh}_0 \text{-} \mathbb X) \cup \{ \mathcal W^{\perp} | \mathcal W \in \mathrm{Exc}(\mathrm{coh}_0 \text{-} \mathbb X) \}$.
\end{proof}

Further, the following result about the structure of $\vec c$-invariant wide subcategory is known for experts, after \cite[Theorem 9.5]{Werner1991Perpendicular}. Here, we provide a proof.

\begin{prop}
Let $\mathbb X$ be a weighted projective line of type $(p_1,\cdots,p_n)$, and let $E$ be an exceptional torsion sheaf in $\mathrm{coh} \text{-} \mathbb X$ concentrated on $\lambda_i$, then
\[
E^{\perp} \simeq \mathrm{coh} \text{-}\mathbb X' \coprod k \vec{\mathbb A}_{l(E)-1}\mathrm{-}\mathrm{mod},
\]
where $\mathbb X'$ is a weighted projective line of type $(p_1,\cdots,p_i-l(E),\cdots,p_n)$.
\end{prop}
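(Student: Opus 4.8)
The plan is to peel off the finite-dimensional ``$\vec{\mathbb A}$-part'' of $E^{\perp}$ coming from the tube at $\lambda_i$, and to identify the remainder with the coherent sheaves over a weighted projective line of reduced weight. Write $m=l(E)$ and $S=\mathrm{top}(E)$, so that inside the tube $\mathrm{tor}_{\lambda_i}\text{-}\mathbb X\cong\mathcal U_{p_i}$ we have $E\cong S^{[m]}$, and $m<p_i$ by Lemma~\ref{uni_exc}. I introduce the two candidate factors
\[
\mathcal C_1:=(S,\tau S,\dots,\tau^{m-1}S)^{\perp}\ (\text{perpendicular in }\mathrm{coh}\text{-}\mathbb X),\qquad
\mathcal C_2:=\langle \tau S,\tau^2 S,\dots,\tau^{m-1}S\rangle .
\]
By the remark following Proposition~\ref{uni_exc_perp}, $\mathcal C_2\cong k\vec{\mathbb A}_{m-1}\text{-mod}$; this will be the second summand. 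The goal is then to prove that $E^{\perp}=\mathcal C_1\coprod\mathcal C_2$ as abelian categories and that $\mathcal C_1\cong\mathrm{coh}\text{-}\mathbb X'$.

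First I would establish the orthogonal decomposition $E^{\perp}=\mathcal C_1\coprod\mathcal C_2$. Both factors lie in $E^{\perp}$: since $E\in\langle S,\tau S,\dots,\tau^{m-1}S\rangle$ we get $\mathcal C_1=\langle S,\dots,\tau^{m-1}S\rangle^{\perp}\subseteq\langle E\rangle^{\perp}=E^{\perp}$, while $\mathcal C_2\subseteq E^{\perp}$ follows from Proposition~\ref{uni_exc_perp} (the computation inside the tube, which agrees with the ambient one because $E$ and $E(\vec{\omega})$ are torsion at $\lambda_i$). For full orthogonality, note that for $C\in\mathcal C_1$ Serre duality turns the defining vanishings into $\mathrm{Hom}_{\mathbb X}(C,\tau^{l}S)=0$ and $\mathrm{Ext}^1_{\mathbb X}(C,\tau^{l}S)=0$ for $1\le l\le m$, so that $\mathrm{Hom}$ and $\mathrm{Ext}^1$ vanish in both directions between $C$ and each composition factor $\tau^{l}S$ ($1\le l\le m-1$) of an object of $\mathcal C_2$; dévissage along such filtrations gives $\mathrm{Hom}_{\mathbb X}(\mathcal C_1,\mathcal C_2)=\mathrm{Hom}_{\mathbb X}(\mathcal C_2,\mathcal C_1)=0$ and likewise for $\mathrm{Ext}^1$. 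To see that nothing else occurs, I would compute $\mathcal C_2^{\perp_{E^{\perp}}}=\mathcal C_1$: the inclusion $\supseteq$ is the orthogonality just proved, and for $\subseteq$ one applies $\mathrm{Hom}_{\mathbb X}(-,A)$ to $0\to(\tau S)^{[m-1]}\to E\to S\to 0$ (with $(\tau S)^{[m-1]}\in\mathcal C_2$) to force $\mathrm{Hom}_{\mathbb X}(S,A)=\mathrm{Ext}^1_{\mathbb X}(S,A)=0$ for $A\in\mathcal C_2^{\perp}\cap E^{\perp}$. Since $\mathcal C_2$ is generated by an exceptional sequence, hence by a rigid object (Proposition~\ref{exc_rigid}), Proposition~\ref{perp_cal}(2) applied inside the hereditary Ext-finite category $E^{\perp}$ shows that $(\mathcal C_2,\mathcal C_1)$ is a complete Ext-orthogonal pair there. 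Feeding the five-term sequences into the vanishing of $\mathrm{Hom}_{\mathbb X}(\mathcal C_1,\mathcal C_2)$ and $\mathrm{Ext}^1_{\mathbb X}(\mathcal C_1,\mathcal C_2)$ collapses the outer terms and splits the middle extension, so every object of $E^{\perp}$ splits as a $\mathcal C_1$-part and a $\mathcal C_2$-part; this yields $E^{\perp}=\mathcal C_1\coprod\mathcal C_2$.

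It remains to identify $\mathcal C_1$ with $\mathrm{coh}\text{-}\mathbb X'$, and this is the substantive step. As supporting evidence for the right answer, the finite-length objects of $\mathcal C_1$ are exactly $\coprod_{\lambda\neq\lambda_i}\mathrm{tor}_\lambda\text{-}\mathbb X$ together with the reduced tube $(S,\dots,\tau^{m-1}S)^{\perp}\cap\mathrm{tor}_{\lambda_i}\text{-}\mathbb X$, which by Proposition~\ref{uni_exc_perp} is uniserial of rank $p_i-m$; hence the tube ranks of $\mathcal C_1$ are $(p_1,\dots,p_i-m,\dots,p_n)$, precisely those of $\mathrm{coh}_0\text{-}\mathbb X'$. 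To upgrade this to an equivalence I would invoke the Geigle--Lenzing reduction, that is, the fact that the perpendicular category of the $m$ consecutive simples $S,\tau S,\dots,\tau^{m-1}S$ at the exceptional point $\lambda_i$ is again the category of coherent sheaves over the weighted projective line obtained by lowering the $i$-th weight from $p_i$ to $p_i-m$; this is \cite[Theorem~9.5]{Werner1991Perpendicular}. Concretely one exhibits a line bundle of $\mathcal C_1$ playing the role of the structure sheaf, checks that $\mathcal C_1$ is connected, Noetherian, hereditary and Ext-finite with Serre duality, and reads off the grading group $L(\bm p')$; the recognition theorem for weighted projective lines then gives $\mathcal C_1\cong\mathrm{coh}\text{-}\mathbb X'$ with $\mathbb X'$ of type $(p_1,\dots,p_i-m,\dots,p_n)$.

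The main obstacle is precisely this last identification: establishing that $\mathcal C_1$ carries a Serre duality and a tilting object (equivalently, a structure sheaf together with the correct grading group $L(\bm p')$) so that it is recognised as $\mathrm{coh}\text{-}\mathbb X'$, rather than merely having the correct finite-length subcategory. Matching the tube ranks pins down the type, but producing the structure sheaf and verifying the weighted-projective-line axioms is where the real content of \cite[Theorem~9.5]{Werner1991Perpendicular} is needed; the orthogonal splitting-off of the $k\vec{\mathbb A}_{m-1}\text{-mod}$ factor, by contrast, is entirely formal given the perpendicular calculus of Section~3 and the uniserial analysis of Section~4.
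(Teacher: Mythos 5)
Your proposal is correct, and it uses the same decomposition as the paper --- your $\mathcal C_1,\mathcal C_2$ are exactly the paper's $\mathcal A=(S_{i,j},\tau S_{i,j},\dots,\tau^{m-1}S_{i,j})^{\perp}$ and $\mathcal B=\langle \tau S_{i,j},\dots,\tau^{m-1}S_{i,j}\rangle$, and both treatments ultimately reduce $\mathcal C_1\simeq \mathrm{coh}\text{-}\mathbb X'$ to \cite[Theorem~9.5]{Werner1991Perpendicular} --- but the mechanism by which you establish $E^{\perp}=\mathcal C_1\coprod\mathcal C_2$ is genuinely different. The paper argues pointwise on indecomposables: a vector bundle $F\in E^{\perp}$ lands in $\mathcal A$ because $\mathrm{Ext}^1_{\mathbb X}(-,F)$ is exact on the tube (so $\mathrm{Ext}^1_{\mathbb X}(E,F)=0$ propagates to all composition factors of $E$), torsion at other points lies trivially in $\mathcal A$, torsion at $\lambda_i$ is handled by Proposition~\ref{uni_exc_perp}, and the mutual Hom-vanishing is assembled from Proposition~\ref{uni_exc_perp} together with Lemma~\ref{bundle_tor}(2). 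You avoid all case analysis: Serre duality converts $C\in\mathcal C_1$ into the two-sided vanishing of $\mathrm{Hom}$ and $\mathrm{Ext}^1$ against $\tau^{l}S$ for $1\le l\le m$ (which I checked: $\mathrm{Ext}^1_{\mathbb X}(C,\tau^l S)\cong D\,\mathrm{Hom}_{\mathbb X}(\tau^{l-1}S,C)$, part of the defining conditions), you pin down $\mathcal C_2^{\perp_{E^{\perp}}}=\mathcal C_1$ via the correct short exact sequence $0\to(\tau S)^{[m-1]}\to E\to S\to 0$, and then Proposition~\ref{perp_cal} applied inside the Ext-finite hereditary wide subcategory $E^{\perp}$ yields five-term sequences whose outer $\mathcal C_1$-terms vanish (an epi $Y^0\twoheadrightarrow X_1$ and a mono $Y^1\hookrightarrow X_0$ from $\mathcal C_1$ to $\mathcal C_2$ must be zero) and whose middle extension splits by $\mathrm{Ext}^1_{\mathbb X}(\mathcal C_1,\mathcal C_2)=0$; this formal route buys uniformity and dispenses with Lemma~\ref{bundle_tor}(2) entirely, while the paper's route buys an explicit description of which indecomposables land in which factor. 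The one place you are less self-contained is the identification $\mathcal C_2\simeq k\vec{\mathbb A}_{m-1}\text{-}\mathrm{mod}$: you cite the remark following Proposition~\ref{uni_exc_perp}, but that remark is stated without proof, and the paper substantiates it inside this very proof by exhibiting $P=\oplus_{t=1}^{m-1}\tau^{t}S^{[m-t]}$ as a projective generator (projectivity via Serre duality, generation via Proposition~\ref{proj_gen}); the fact is standard, but strictly you should reproduce that argument or cite it independently rather than lean on an unproved remark of the same paper.
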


\begin{proof}
   Assume $E \cong S_{i,j}^{[m]}$, then we have $m<p_i$ by Lemma~\ref{uni_exc}. Denote $\mathcal A:=(S_{i,j}, \tau S_{i,j}, \cdots, \tau^{m-1} S_{i,j})^\perp$ and denote $\mathcal B:= \langle \tau S_{i,j}, \cdots, \tau^{m-1} S_{i,j} \rangle$. We claim that $E^\perp =\mathcal A \coprod \mathcal B$. 

   Obviously $E \in \langle S_{i,j}, \tau S_{i,j}, \cdots, \tau^{m-1} S_{i,j} \rangle$, so $\mathcal A \subseteq E^\perp$. It is easy to check that $\tau S_{i,j}, \cdots, \tau^{m-1} S_{i,j} \in E^\perp$, so $\mathcal B \subseteq E^\perp$.

   For any indecomposable sheaf $F$, we will prove that $F \in \mathcal A$ or $F \in \mathcal B$. If $F \in \mathrm{vect}(\mathbb X)$, since $\mathrm{Hom}_{\mathbb X}(T,F)=0$ for any $T \in \mathrm{coh}_0 \text{-} \mathbb X$, by the homological long exact sequence, we have that
   \[
      \mathrm{Ext}_{\mathbb X}^1(-,F):\mathrm{tor}_{\lambda_i} \text{-} \mathbb X \to k \text{-mod}
   \]
   is a contravariant exact functor. So $\mathrm{Ext}_{\mathbb X}^1(E,F)=0$ implies that for any $S \in \mathrm{CF}(E)$, we have $\mathrm{Ext}_{\mathbb X}^1(S,F)=0$. That is $\mathrm{Ext}_{\mathbb X}^1(\tau^{l} S_{i,j},F)=0$ for $l=0,1,\cdots, m-1$. So $F \in \mathcal A$. If $F \in \mathrm{tor}_\lambda (\mathbb X)$ with $\lambda \notin \{\lambda_1, \cdots, \lambda_n \}$, obviously $F \in \mathcal A$. If $F \in \mathrm{tor}_{\lambda_i} (\mathbb X)$, then by Proposition~\ref{uni_exc_perp}, we have either $F \in \mathcal A$ or $F \in \mathcal B$. Now we have proved that $F \in \mathcal A$ or $F \in \mathcal B$.

   By Proposition~\ref{uni_exc_perp}, for any $A' \in \mathcal A \cap \mathrm{tor}_{\lambda_i} \text{-} \mathbb X$, $B \in \mathcal B$, we have 
   \[
      \mathrm{Hom}_{\mathbb X}(A',B)=0=\mathrm{Hom}_{\mathbb X}(B,A').
   \]
   And then by Lemma~\ref{bundle_tor}(2), for any $A \in \mathcal A$, $B \in \mathcal B$, we have $\mathrm{Hom}_{\mathbb X}(A,B)=0$. Obviously, $\mathrm{Hom}_{\mathbb X}(B,A)=0$. So $\mathrm{Hom}_{\mathbb X}(\mathcal A, \mathcal B)=\mathrm{Hom}_{\mathbb X}(\mathcal B, \mathcal A)=0$. Now our claim $E^\perp =\mathcal A \coprod \mathcal B$ is proved. 
      
   It remains to prove that $\mathcal A \simeq \mathrm{coh} \text{-}\mathbb X'$ and $\mathcal B \simeq k \vec{\mathbb A}_{m-1}$. The former can be concluded by \cite[Theorem~9.5]{Werner1991Perpendicular}. For the latter, we only need to prove that $P=\oplus_{t=1}^{m-1} \tau^t S_{i,j}^{[m-t]}$ is a projective generator of $\mathcal B$. By Serre duality, 
   \[
      \mathrm{Ext}_{\mathbb X}^1(\tau^t S_{i,j}^{[m-t]}, M) \cong D \mathrm{Hom}_{\mathbb X}(M, \tau^{t+1} S_{i,j}^{[m-t]}) =0,
   \]
   where $t=1,2, \cdots , m-1$, $M \in \mathcal B$. So $P$ is projective in $\mathcal B$. By Proposition~\ref{proj_gen}, $P$ is a generator of $\mathcal B$. The proof is complete.
\end{proof}

\section{Wide subcategories of a domestic weighted projective line}

In this section, we classify all wide subcategories for category of coherent sheaves over a domestic weighted projective line. And study the poset of those wide subcategories.

In \cite{dichev2009thick}, wide subcategories of a tame hereditary algebra are classified well: any wide subcategory is either generated by an exceptional sequence, or consists of only regular modules. The following proposition is the version of weighted projective lines for the classification theorem above.

\begin{prop} \label{thick_domestic_bundle}
Let $\mathbb X$ be a weighted projective line of domestic type and $\mathcal W$ a wide subcategory of $\mathrm{coh} \text{-} \mathbb X$. If $\mathcal W$ contains a vector bundle, then $\mathcal W \in \mathrm{Exc}(\mathrm{coh} \text{-} \mathbb X)$.
\end{prop}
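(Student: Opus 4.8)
The plan is to reduce everything to Corollary~\ref{wide_gen_exc}, whose hypotheses are met as soon as every indecomposable object of $\mathcal W$ is exceptional. Since $\mathbb X$ is domestic, Theorem~\ref{domestic} supplies a tilting object whose endomorphism ring is a finite-dimensional path algebra; in particular $\mathrm{rank}(K_0(\mathrm{coh}\text{-}\mathbb X))<\infty$, so the rank hypothesis of Corollary~\ref{wide_gen_exc} holds, and moreover every indecomposable vector bundle is exceptional by Theorem~\ref{domestic}(1). Thus the only possible non-exceptional indecomposables in $\mathcal W$ are torsion sheaves, and by Proposition~\ref{contain_non_exc} such a sheaf can occur in $\mathcal W$ (which contains a bundle) precisely when $\mathcal W$ is $\vec c$-invariant. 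This dichotomy organizes the proof into two cases.

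First I would treat the case where $\mathcal W$ is \emph{not} $\vec c$-invariant. Then Proposition~\ref{contain_non_exc} forbids any indecomposable non-exceptional torsion sheaf in $\mathcal W$, so together with Theorem~\ref{domestic}(1) every indecomposable object of $\mathcal W$ is exceptional. Corollary~\ref{wide_gen_exc} then gives $\mathcal W\in\mathrm{Exc}(\mathrm{coh}\text{-}\mathbb X)$ immediately.

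The substantial case is when $\mathcal W$ is $\vec c$-invariant. Here Theorem~\ref{c_inv}(2) writes $\mathcal W=(E^1,\dots,E^n)^{\perp}$ for torsion exceptional sequences $E^i$ supported at $\lambda_i$; concatenating them (objects in different tubes are mutually $\mathrm{Hom}$- and $\mathrm{Ext}^1$-orthogonal) yields a single torsion exceptional sequence $(G_1,\dots,G_r)$ with $\mathcal W=(G_1,\dots,G_r)^{\perp}$. I would argue by induction on $r$, simultaneously over all domestic $\mathbb X$. For $r=0$ we have $\mathcal W=\mathrm{coh}\text{-}\mathbb X=\langle T\rangle$ for the tilting object $T$ of Theorem~\ref{domestic}, which is rigid, so $\mathcal W\in\mathrm{Exc}(\mathrm{coh}\text{-}\mathbb X)$ by Proposition~\ref{exc_rigid}. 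For the inductive step I would peel off the last object $G_r$: since $(G_1,\dots,G_r)$ is exceptional, $\mathrm{Ext}^p(G_r,G_j)=0$ for $j<r$, so $G_1,\dots,G_{r-1}\in G_r^{\perp}$ and $\mathcal W=(G_1,\dots,G_{r-1})^{\perp}$ computed inside the wide subcategory $G_r^{\perp}$ (the $\mathrm{Ext}^1$ occurring in a perpendicular is the same whether taken in $G_r^{\perp}$ or in $\mathrm{coh}\text{-}\mathbb X$, as $G_r^{\perp}$ is wide in a hereditary category). Applying the structural description of the perpendicular of a single exceptional torsion sheaf,
\[
G_r^{\perp}\simeq \mathrm{coh}\text{-}\mathbb X'\,\coprod\, k\vec{\mathbb A}_{l(G_r)-1}\text{-}\mathrm{mod},
\]
with $\mathbb X'$ again domestic, the exceptional (hence indecomposable) objects $G_1,\dots,G_{r-1}$ distribute into the two factors. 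Their perpendicular in the $\mathrm{coh}\text{-}\mathbb X'$ factor is the perpendicular of at most $r-1$ torsion exceptional objects over a domestic line, hence lies in $\mathrm{Exc}$ by the induction hypothesis; their perpendicular in the directed category $k\vec{\mathbb A}_{l(G_r)-1}\text{-}\mathrm{mod}$, where every indecomposable is exceptional, lies in $\mathrm{Exc}$ by Corollary~\ref{wide_gen_exc}. Finally I would record that a coproduct of two wide subcategories each generated by an exceptional sequence is itself so generated, by concatenating the two mutually orthogonal exceptional sequences; this gives $\mathcal W\in\mathrm{Exc}(\mathrm{coh}\text{-}\mathbb X)$ and closes the induction, using once more that a wide subcategory of a wide subcategory is wide, so the property of being generated by an exceptional sequence is insensitive to whether one works inside $G_r^{\perp}$ or inside $\mathrm{coh}\text{-}\mathbb X$.

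The main obstacle is exactly this $\vec c$-invariant case: unlike the first case, $\mathcal W$ genuinely contains non-exceptional indecomposables (by Proposition~\ref{contain_non_exc}), so Corollary~\ref{wide_gen_exc} cannot be applied to $\mathcal W$ directly, and one is forced to exploit the finer structure of the perpendicular. The two points needing care are that the reduced line $\mathbb X'$ remains of domestic type, so that both the induction and the tilting object of Theorem~\ref{domestic} stay available (this holds because lowering a weight only decreases $\sum_i(1-1/p_i)$), and that the objects $G_1,\dots,G_{r-1}$ indeed land as torsion exceptional objects in the $\mathrm{coh}\text{-}\mathbb X'$ factor of $G_r^{\perp}$, so that the induction hypothesis genuinely applies to a strictly shorter torsion exceptional sequence.
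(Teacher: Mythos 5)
Your proposal is correct, but it takes a genuinely different and substantially heavier route than the paper. The paper never splits on $\vec c$-invariance: it fixes an (indecomposable, hence exceptional) vector bundle $F \in \mathcal W$ and shows directly that \emph{every} indecomposable object of $F^{\perp_{\mathcal W}}$ is exceptional, the key point being that a non-exceptional indecomposable torsion sheaf $E$ --- whether concentrated at an ordinary point, or at an exceptional point $\lambda_i$, where Lemma~\ref{uni_exc} forces $\mathrm{CF}(E)=\{S_{i,1},\dots,S_{i,p_i}\}$ --- receives a nonzero map from any quotient line bundle $L$ of $F$, hence from $F$, contradicting $E \in F^{\perp}$; then Corollary~\ref{wide_gen_exc} applied to $F^{\perp_{\mathcal W}}$ together with Proposition~\ref{perp_cal}(1) yields $\mathcal W=\langle F, E_1,\dots,E_r\rangle$ with $(E_1,\dots,E_r,F)$ exceptional. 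Your Case 1 (non-$\vec c$-invariant) is in the same spirit, applying Corollary~\ref{wide_gen_exc} to $\mathcal W$ itself via Proposition~\ref{contain_non_exc}; but your Case 2 replaces the paper's one-line Hom-argument by the full classification Theorem~\ref{c_inv}(2), the structure proposition $G_r^{\perp}\simeq \mathrm{coh}\text{-}\mathbb X'\coprod k\vec{\mathbb A}_{l(G_r)-1}\text{-}\mathrm{mod}$, and an induction over all domestic lines. This is legitimate --- there is no circularity, since Theorem~\ref{c_inv} and the structure proposition are established in Section 6 independently of this result, and the details you flag do check out: monomorphisms in the wide subcategory $G_r^{\perp}$ agree with ambient ones, so the $G_j$ remain of finite length and land as torsion exceptional objects in the $\mathrm{coh}\text{-}\mathbb X'$ factor; $\mathbb X'$ stays domestic since lowering a weight decreases $\sum_i(1-1/p_i)$; and both $\mathrm{Ext}^1$ and wide-subcategory generation are insensitive to passing between $G_r^{\perp}$ and $\mathrm{coh}\text{-}\mathbb X$. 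What your route buys is the explicit intermediate statement that the perpendicular of any torsion exceptional sequence on a domestic line lies in $\mathrm{Exc}$, with concrete generators produced factor by factor; what the paper's route buys is brevity and uniformity --- no case distinction, no recursion on weight types, and no dependence on the classification of $\vec c$-invariant subcategories.
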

\begin{proof}
Let $F \in \mathcal W$ be a vector bundle, and take $E \in F^{\perp_{\mathcal W}}$ be indecomposable. If $E \in \mathrm{vect}(\mathbb X)$, by Proposition~\ref{domestic}(1), we know that  $E$ is exceptional. If $E \in \mathrm{tor}(\mathbb X)$, suppose that $\mathrm{supp}(E)=\{\lambda \}$, we will show $E$ is exceptional. By the line bundle filtration of vector bundles, $F$ has a quotient line bundle $L$. If $\lambda \notin \{\lambda_1, \cdots, \lambda_n \}$, then $\mathrm{Hom}_{\mathbb X}(L,E) \neq 0$, hence $\mathrm{Hom}_{\mathbb X}(F,E) \neq 0$, a contradiction. If $\lambda=\lambda_i$ for some $\lambda_i$ and $E$ is not exceptional, then Lemma~\ref{uni_exc} implies that $\mathrm{CF}(E)=\{S_{i,1}, \cdots, S_{i,p_i} \}$. So $\mathrm{Hom}_{\mathbb X}(L,E) \neq 0$, and then $\mathrm{Hom}_{\mathbb X}(F,E) \neq 0$, a contradiction.

So each indecomposable sheaf $E \in F^{\perp_{\mathcal W}} \neq 0$ is exceptional. By Lemma~\ref{wide_gen_exc}, there exists an exceptional sequence $(E_1, E_2, \cdots , E_r)$ in $F^{\perp_{\mathcal W}}$, such that 
\[
   F^{\perp_{\mathcal W}}=\langle E_1, E_2, \cdots , E_r \rangle.
\] 
Hence $\mathcal W= \langle F, E_1, E_2, \cdots , E_r \rangle$. Obviously $(F, E_1, E_2, \cdots , E_r)$ is an exceptional sequence in $\mathcal W$, so $\mathcal W \in \mathrm{Exc}(\mathrm{coh} \text{-} \mathbb X)$.
\end{proof}

\begin{rmk}
It is known that for a weighted projective line $\mathbb X$ of any type, the perpendicular category of a vector bundle is equivalent to the module category of a hereditary algebra $A$. Moreover if $\mathbb X$ is domestic, then $A$ is representation-finite. It is also known that any wide subcategory of a representation-finite hereditary algebra can be generated by an exceptional sequence. The last proposition can also be proved in this way.
\end{rmk}

\begin{cor} \label{thick_nexc}
   Let $\mathbb X$ be a weighted projective line of domestic type. Then $\mathrm{NExc}( \mathrm{coh} \text{-} \mathbb X) \subseteq \mathrm{Wid}(\mathrm{coh}_0 \text{-} \mathbb X) \subseteq \mathrm{Wid}(\mathrm{coh} \text{-} \mathbb X)^{\vec c}$.
\end{cor}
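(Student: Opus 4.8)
The plan is to observe that this corollary is essentially a packaging of two facts, one already recorded in Section~5 and one furnished by Proposition~\ref{thick_domestic_bundle}, so the proof should be short and require no new computation. I would handle the two inclusions separately, establishing the right-hand one first since it is immediate and holds for $\mathbb X$ of arbitrary type, and then the left-hand one, which is where the domestic hypothesis is used.

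For the inclusion $\mathrm{Wid}(\mathrm{coh}_0\text{-}\mathbb X) \subseteq \mathrm{Wid}(\mathrm{coh}\text{-}\mathbb X)^{\vec c}$, I would first note that $\mathrm{coh}_0\text{-}\mathbb X$ is closed under kernels, cokernels and extensions in $\mathrm{coh}\text{-}\mathbb X$, so any wide subcategory of $\mathrm{coh}_0\text{-}\mathbb X$ is automatically a wide subcategory of $\mathrm{coh}\text{-}\mathbb X$. Then I would invoke the observation recorded just before Proposition~\ref{contain_non_exc}: every torsion sheaf $T$ satisfies $T \cong T(\vec c)$, so a wide subcategory all of whose objects are torsion is closed under the shift $(\vec c)$ and is therefore $\vec c$-invariant. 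This disposes of the right-hand inclusion.

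For the inclusion $\mathrm{NExc}(\mathrm{coh}\text{-}\mathbb X) \subseteq \mathrm{Wid}(\mathrm{coh}_0\text{-}\mathbb X)$, I would argue by contraposition. Take $\mathcal W \in \mathrm{Wid}(\mathrm{coh}\text{-}\mathbb X)$ and suppose $\mathcal W \not\subseteq \mathrm{coh}_0\text{-}\mathbb X$, so that $\mathcal W$ contains some non-torsion sheaf $F$. Using the canonical splitting $F \cong \mathrm{t}F \oplus F/\mathrm{t}F$ from Section~5, where $F/\mathrm{t}F$ is a nonzero vector bundle, together with the fact that a wide subcategory is closed under direct summands (a summand $A$ of $X\in\mathcal W$ is the kernel of the complementary idempotent, hence lies in $\mathcal W$), I conclude that $\mathcal W$ contains the nonzero vector bundle $F/\mathrm{t}F$. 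Proposition~\ref{thick_domestic_bundle} then gives $\mathcal W \in \mathrm{Exc}(\mathrm{coh}\text{-}\mathbb X)$, i.e.\ $\mathcal W \notin \mathrm{NExc}(\mathrm{coh}\text{-}\mathbb X)$. Contrapositively, any $\mathcal W \in \mathrm{NExc}(\mathrm{coh}\text{-}\mathbb X)$ consists entirely of torsion sheaves, so $\mathcal W \in \mathrm{Wid}(\mathrm{coh}_0\text{-}\mathbb X)$, and chaining the two inclusions completes the proof.

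There is no genuine obstacle here: the substantive input, that a wide subcategory of a domestic $\mathrm{coh}\text{-}\mathbb X$ containing a vector bundle is generated by an exceptional sequence, is exactly Proposition~\ref{thick_domestic_bundle}, and everything else is formal. The only point deserving a word of care is the passage from a non-torsion object to a genuine vector bundle in $\mathcal W$, which relies on the splitting of the torsion subsheaf and on closure of wide subcategories under direct summands; both are standard, so I would state them briefly rather than dwell on them.
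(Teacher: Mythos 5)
Your proof is correct and follows exactly the route the paper intends: the paper leaves this corollary as an immediate consequence of Proposition~\ref{thick_domestic_bundle} (via the splitting $F \cong \mathrm{t}F \oplus F/\mathrm{t}F$ and closure of wide subcategories under direct summands) together with the remark before Proposition~\ref{contain_non_exc} that $T \cong T(\vec c)$ for torsion sheaves. Your write-up simply makes these implicit steps explicit, and does so accurately.
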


Next we consider the poset structure of $\mathrm{Wid}(\mathrm{coh} \text{-} \mathbb X)$, which is ordered by inclusion. We first recall some basic notions for generalized noncrossing partitions. Let $\mathcal H$ be module category of a hereditary algebra or coherent sheaf category of a weighted projective line. There is a partial order called \emph{absolute order} on the Weyl group $W$ of $\mathcal H$:
\[
u<v \ \ \mathrm{provided} \ \ l(u)+l(u^{-1}v)=l(v),
\]
where $l(w)$ is the \emph{absolute length} of $w \in W$, defined to be the minimal integer $r \geq 0$ such that $w$ can be written as a product $w=t_1 \cdots t_r$ of \emph{reflections} $t_i$. The Serre duality functor of $\mathcal H$ induces the Coxeter element $c_{\mathcal H} \in W$ satisfying
\[
\langle x,c_{\mathcal H}(y) \rangle =- \langle y,x \rangle, \ \forall x,y \in K_0(\mathcal H).
\]
For an element $c \in W$, one define the poset of \emph{noncrossing partitions}
\[
\mathrm{NC}(W,c):=\{w \in W | 1 \leq w \leq c \}.
\]
Then we have an order preserving map $\mathrm{cox}: \mathrm{Exc}(\mathcal H) \to \mathrm{NC}(W,c_{\mathcal H})$, defined by $\mathrm{cox}(\langle E_1,\cdots,E_r \rangle)=s_{E_1}\cdots s_{E_r}$, where $(E_1,\cdots,E_n)$ is an exceptional sequence. From \cite[Theorem]{crawley1993exceptional} and \cite[Theorem 4.3.1]{mccullough2004exceptional} we know that the map ``cox'' is well-defined. For a ditailed introduction to root system of hereditary algebras and weighted projective lines, we refer to \cite{hubery2016categorification} and \cite[Section 2]{2016On}, respectively.

The following theorem states that the map ``cox'' is a bijection for hereditary algebras. The case of representation finite type and tame hereditary algebras were first studied in \cite{ingalls2009noncrossing}.

\begin{thm} [{\cite[Theorem~1.2]{hubery2016categorification}}] \label{hubery_krause}

Let $A$ be a finite dimensional hereditary algebra with Weyl group $W$ and $c_A$ the Coxeter element of $W$. Then
\[
\mathrm{cox}: \mathrm{Exc}(A \text{-} \mathrm{mod}) \to \mathrm{NC}(W,c_A)
\]
is an isomorphism of posets.
\end{thm}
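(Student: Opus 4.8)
The plan is to show that $\mathrm{cox}$ is an order-preserving bijection whose inverse is also order-preserving. Since well-definedness and order-preservation of $\mathrm{cox}$ are already available (via \cite{crawley1993exceptional} and \cite{mccullough2004exceptional}), I would concentrate on bijectivity together with a reflection-theoretic description of the fibres that makes the inverse monotone. The backbone of the argument is that both sides are graded posets: a wide subcategory $\mathcal W \in \mathrm{Exc}(A\text{-}\mathrm{mod})$ is itself a hereditary module category (by the perpendicular calculus, Proposition~\ref{perp_cal}), so any exceptional sequence generating it has a common length $r = \mathrm{rank}\,K_0(\mathcal W)$, which I take as its rank; on the other side $\mathrm{NC}(W,c_A)$ is graded by absolute length $l$. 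The first reduction is to check that $\mathrm{cox}$ preserves these gradings, i.e.\ $l(s_{E_1}\cdots s_{E_r}) = r$. This follows from Lemma~\ref{gro_independence}: the classes $[E_1],\dots,[E_r]$ are linearly independent, and a product of $r$ reflections in linearly independent roots cannot be written with fewer than $r$ reflections, so its absolute length is exactly $r$. In particular the image lands in $\mathrm{NC}(W,c_A)$, since completing an exceptional sequence to a complete one multiplies $\mathrm{cox}(\mathcal W)$ on the right by reflections whose lengths add up to give $c_A$, exhibiting $\mathrm{cox}(\mathcal W) \le c_A$.

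For injectivity I would prove the key intrinsic description of the fibres: for $w = \mathrm{cox}(\mathcal W)$, a reflection $t$ satisfies $t \le w$ in the absolute order if and only if $t = s_E$ for some exceptional $E \in \mathcal W$. The ``if'' direction is immediate, since each such $s_E$ occurs as a factor of a reflection factorization realizing $w$. The ``only if'' direction is the heart of the matter: it says the reflection subgroup generated by the reflections below $w$ is exactly the Weyl group of $\mathcal W$, equivalently that the sublattice $K_0(\mathcal W) \subseteq K_0(A\text{-}\mathrm{mod})$ is recovered from $w$ as the span of the positive roots $\beta$ with $s_\beta \le w$. Granting this, $\mathcal W$ is recovered as the wide subcategory generated by $\{E : s_E \le w\}$, so $\mathrm{cox}$ is injective. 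Moreover, the absolute order is transitive and downward closed under reflections --- if $t \le u \le v$ then $t \le v$, which one checks directly from $l(u)+l(u^{-1}v)=l(v)$ and the triangle inequality for $l$ --- so the relation $\mathrm{cox}(\mathcal W) \le \mathrm{cox}(\mathcal W')$ forces $\{E : s_E \le \mathrm{cox}(\mathcal W)\} \subseteq \{E : s_E \le \mathrm{cox}(\mathcal W')\}$ and hence $\mathcal W \subseteq \mathcal W'$. This makes the inverse order-preserving, so it remains only to prove surjectivity.

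For surjectivity I would induct on $n = \mathrm{rank}\,K_0(A\text{-}\mathrm{mod})$. Given $1 \ne w \le c_A$, choose a reduced reflection factorization of $c_A$ beginning with a reduced factorization of $w$, and let $s_{E_1}$ be its first reflection; using that complete exceptional sequences realize exactly the reduced reflection factorizations of $c_A$ (the categorical counterpart of Hurwitz transitivity, via \cite{crawley1993exceptional}), I may take $E_1$ to be an exceptional object. Passing to the perpendicular category $E_1^{\perp}$, which is again hereditary of rank $n-1$ with a Coxeter element $c'$ compatible with $c_A$, the element $s_{E_1}w$ lies in the noncrossing partition poset of the smaller group and equals $\mathrm{cox}$ of an exceptional sequence in $E_1^{\perp}$ by induction; prepending $E_1$ yields an exceptional sequence with $\mathrm{cox}$-image $w$. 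The main obstacle is precisely this inductive step: reconciling the chosen reflection factorization of $w$ with an actual exceptional object, and identifying the Coxeter element of $E_1^{\perp}$ as the correct divisor of $c_A$. This rests on the transitivity of the braid-group action on complete exceptional sequences (categorical side) matching the Hurwitz transitivity on reduced reflection factorizations of a Coxeter element (combinatorial side, in the spirit of Deligne--Bessis and Igusa--Schiffler), together with the reflection-characterization of $\mathcal W$ used for injectivity; establishing these compatibilities rigorously --- rather than the bookkeeping of ranks and orders --- is where the real work lies.
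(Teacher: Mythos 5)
You should first be aware that the paper contains no proof of Theorem~\ref{hubery_krause}: it is imported verbatim from \cite[Theorem~1.2]{hubery2016categorification}, and the only surrounding material is the commutative triangle relating $\mathrm{Exc}(A\text{-}\mathrm{mod})$ to $\mathrm{Exc}(D^b(A\text{-}\mathrm{mod}))$ and the transfer to $\mathrm{coh}\text{-}\mathbb X$ by derived equivalence in Proposition~\ref{domestic_order}. So the only meaningful comparison is with the source's argument, and your outline does reproduce its architecture faithfully: well-definedness via \cite{crawley1993exceptional} and \cite{mccullough2004exceptional}, compatibility of the grading $\mathrm{rank}\,K_0(\mathcal W)$ with absolute length, injectivity by recovering $\mathcal W$ from the reflections below $\mathrm{cox}(\mathcal W)$, and surjectivity by induction on rank through perpendicular categories, with Hurwitz transitivity on reduced reflection factorizations of the Coxeter element matched against the braid action on complete exceptional sequences (Crawley-Boevey on the categorical side, Igusa--Schiffler on the combinatorial side).

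As a proof, however, your text is a plan whose deferred points are precisely the substance of the cited paper, and three of them deserve to be flagged as more than bookkeeping. (1) The length computation $l(s_{E_1}\cdots s_{E_r})=r$ does not follow from Lemma~\ref{gro_independence} alone: the implication ``$r$ linearly independent roots give a reduced reflection factorization'' is the nontrivial direction of Carter's lemma, proved classically only for finite Weyl groups, whereas here $W$ may be affine or of indefinite type and the Euler form is not symmetric; Hubery--Krause must re-establish this for their reflection representation. (2) In the ``only if'' half of your fibre description, knowing that the roots $\beta$ with $s_\beta\le w$ span $K_0(\mathcal W)$ is weaker than what you use: you need each such reflection to come from a real Schur root of $\mathcal W$, i.e.\ to be realized by an actual exceptional object in $\mathcal W$, and this is where most of the source's effort goes. (3) In the inductive step for surjectivity, the Weyl group of $E_1^{\perp}$ is a reflection subgroup of $W$, and it is not automatic that absolute length and the noncrossing order computed inside it agree with those computed in $W$; this compatibility also has to be proved. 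You honestly flag (2) and (3) yourself, so the proposal is best read as a correct summary of the Hubery--Krause proof strategy rather than a proof; since the present paper invokes the theorem as a black box, nothing in your sketch conflicts with how it is used here.
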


In fact, by Proposition \ref{bruning}, we have the following commutative diagram.
\[
   \xymatrix{\mathrm{Exc}(A \text{-} \mathrm{mod}) \ar[rd]^{\mathrm{cox}} \ar[r]^{\sim} & \mathrm{Exc}(D^b(A \text{-} \mathrm{mod})) \ar[d]^{\mathrm{cox}}\\
    & W }
\]
In the commutative diagram above, we denote by $\mathrm{Exc}(D^b(A \text{-} \mathrm{mod}))$ the set of thick subcategories of $D^b(A \text{-} \mathrm{mod})$ which can be generated by an exceptional sequence. And we still use the notation ``cox'' to present the map 
\begin{align*}
   \mathrm{Exc}(D^b(A \text{-} \mathrm{mod})) &\to W \\
   \langle E_1, \cdots, E_r \rangle &\mapsto s_{E_1} \cdots s_{E_r}.
\end{align*}
Since a domestic weighted projective line $\mathbb X$ is derived equivalent to a tame hereditary algebra (Theorem \ref{domestic}), the last theorem also holds true for $\mathrm{coh} \text{-} \mathbb X$.

\begin{prop} \label{domestic_order}
Let $\mathbb X$ be a weighted projective line of domestic type with Weyl group $W$ and $c_{\mathbb X}$ the Coxeter element of $\mathrm{coh} \text{-} \mathbb X$. Then
\[
\mathrm{cox}: \mathrm{Exc}(\mathrm{coh} \text{-} \mathbb X) \to \mathrm{NC}(W,c_{\mathbb X})
\]
is an isomorphism of posets.
\end{prop}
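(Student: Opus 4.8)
The plan is to transport the isomorphism of Theorem~\ref{hubery_krause} across the derived equivalence supplied by Theorem~\ref{domestic}. Let $A$ be the tame hereditary algebra with $A \cong \mathrm{End}_{\mathbb X}(T)$, so that there is a triangle equivalence $\Phi \colon \mathcal D^b(\mathrm{coh}\text{-}\mathbb X) \xrightarrow{\sim} \mathcal D^b(A\text{-}\mathrm{mod})$. Being an equivalence of triangulated categories, $\Phi$ induces an isometry $K_0(\mathcal D^b(\mathrm{coh}\text{-}\mathbb X)) \xrightarrow{\sim} K_0(\mathcal D^b(A\text{-}\mathrm{mod}))$ for the Euler forms, hence an isomorphism of the associated Weyl groups carrying reflections to reflections. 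The Coxeter element is induced by the Serre functor, which is intrinsic to the triangulated structure and therefore preserved by $\Phi$; thus this isomorphism of Weyl groups identifies $c_{\mathbb X}$ with $c_A$, and identifies the posets $\mathrm{NC}(W,c_{\mathbb X})$ and $\mathrm{NC}(W,c_A)$.

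First I would record the derived-category version of the commutative diagram already exhibited for $A$. By Proposition~\ref{bruning}, the assignment $\mathcal W \mapsto \{X : H^n(X)\in\mathcal W \ \forall n\}$ is a poset isomorphism between wide subcategories of $\mathcal H$ and thick subcategories of $\mathcal D^b(\mathcal H)$ for any hereditary $\mathcal H$, and under it $\langle \mathcal X\rangle$ in $\mathcal H$ corresponds to $\langle \mathcal X\rangle$ in $\mathcal D^b(\mathcal H)$ for any class $\mathcal X$ of objects placed in degree $0$; here one uses the decomposition of Proposition~\ref{bruning}(1) to see that a thick subcategory generated by objects of $\mathcal H$ is exactly the image of the wide subcategory they generate. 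In particular it restricts to a poset isomorphism $\mathrm{Exc}(\mathcal H)\xrightarrow{\sim}\mathrm{Exc}(\mathcal D^b(\mathcal H))$, where the right-hand side denotes thick subcategories generated by an exceptional sequence; and since the reflection $s_E$ depends only on $[E]$ and the Euler form, the two $\mathrm{cox}$ maps agree through this identification. Applying this to $\mathcal H = \mathrm{coh}\text{-}\mathbb X$ yields the square
\[
   \xymatrix{\mathrm{Exc}(\mathrm{coh}\text{-}\mathbb X) \ar[rd]^{\mathrm{cox}} \ar[r]^{\sim} & \mathrm{Exc}(\mathcal D^b(\mathrm{coh}\text{-}\mathbb X)) \ar[d]^{\mathrm{cox}}\\
    & W }
\]
entirely analogous to the one displayed above for $A$.

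Next I would observe that $\Phi$ sends exceptional objects to exceptional objects and exceptional sequences to exceptional sequences, since both notions are phrased purely in terms of graded morphism spaces in the triangulated category, which $\Phi$ preserves. Hence $\Phi$ carries thick subcategories generated by an exceptional sequence to thick subcategories of the same kind, giving a poset isomorphism $\Phi_* \colon \mathrm{Exc}(\mathcal D^b(\mathrm{coh}\text{-}\mathbb X)) \xrightarrow{\sim} \mathrm{Exc}(\mathcal D^b(A\text{-}\mathrm{mod}))$ that is compatible with the $\mathrm{cox}$ maps to $W$ by the Grothendieck-group compatibility of the first paragraph. Writing $\alpha$ and $\beta$ for the two instances of the Bruning isomorphism (for $\mathbb X$ and for $A$) and $\mathrm{cox}_A$ for the isomorphism of Theorem~\ref{hubery_krause}, the compatibility of all the $\mathrm{cox}$ maps through $W$ gives
\[
\mathrm{cox} = \mathrm{cox}_A \circ \beta^{-1} \circ \Phi_* \circ \alpha \colon \mathrm{Exc}(\mathrm{coh}\text{-}\mathbb X) \to \mathrm{NC}(W,c_A).
\]
As a composite of poset isomorphisms this is an isomorphism of posets, and since $c_A$ is identified with $c_{\mathbb X}$ its target is $\mathrm{NC}(W,c_{\mathbb X})$, which proves the claim.

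The only genuinely non-formal input is Theorem~\ref{hubery_krause}, which I am assuming. The point requiring the most care is the compatibility of all the $\mathrm{cox}$ maps and of the Coxeter elements with the derived equivalence: one must check that $\Phi$ respects the Serre functor (hence transports $c_{\mathbb X}$ to $c_A$) and that each reflection $s_E$ is transported correctly, so that the relevant squares commute. Once these compatibilities are in place the argument is a diagram chase. I would also take care to verify that the restriction of the Bruning correspondence to exceptional-generated subcategories is genuinely a bijection, using Proposition~\ref{bruning}(1) to decompose an arbitrary object of a thick subcategory into its cohomologies placed in the appropriate degrees.
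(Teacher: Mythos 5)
Your proposal is correct and follows essentially the same route as the paper, which likewise deduces the proposition from Theorem~\ref{hubery_krause} by transporting it through the derived equivalence of Theorem~\ref{domestic}, using the correspondence of Proposition~\ref{bruning} to identify $\mathrm{Exc}(\mathrm{coh}\text{-}\mathbb X)$ with the exceptional-generated thick subcategories of the derived category and the factorization of $\mathrm{cox}$ through $W$. You merely spell out the compatibilities (isometry of Euler forms, preservation of the Serre functor and hence of Coxeter elements, transport of reflections) that the paper leaves implicit.
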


\begin{rmk}
Proposition~\ref{domestic_order} also holds true for weighted projective lines of wild type; see \cite[Theorem 7.2.5]{yahiatene2020hurwitz}. But it is still unknown for those of tubular type.
\end{rmk}

For a poset $P$, we denote by $\mathrm{Ord}(P)=\{(p_1,p_2) \in P \times P \ | \ p_1 \leq p_2 \}$. Then we have the following statement of our main theorem:

\begin{thm} \label{main}
   Let $\mathbb X$ be a weighted projective line of domestic type. Then we have
   \[
      \mathrm{Wid}(\mathrm{coh} \text{-} \mathbb X)=\mathrm{Exc}(\mathrm{coh} \text{-} \mathbb X) \cup \mathrm{Wid}(\mathrm{coh} \text{-} \mathbb X)^{\vec c},
   \]
   and
   \[
      \mathrm{Ord}(\mathrm{Wid}(\mathrm{coh} \text{-} \mathbb X))=\mathrm{Ord}(\mathrm{Exc}(\mathrm{coh} \text{-} \mathbb X)) \cup \mathrm{Ord}(\mathrm{Wid}(\mathrm{coh} \text{-} \mathbb X)^{\vec c}).
   \]
   In particular, we have a pushout of posets
   \begin{equation} \label{po}
      \begin{split}
         \xymatrix{
   \mathrm{Exc}(\mathrm{coh} \text{-} \mathbb X) \cap \mathrm{Wid}(\mathrm{coh} \text{-} \mathbb X)^{\vec c} \ar@{^(->}[r] \ar@{^(->}[d] & \mathrm{Wid}(\mathrm{coh} \text{-} \mathbb X)^{\vec c} \ar@{^(->}[d]\\
   \mathrm{Exc}(\mathrm{coh} \text{-} \mathbb X) \ar@{^(->}[r] & \mathrm{Wid}(\mathrm{coh} \text{-} \mathbb X).
   }
      \end{split}
   \end{equation}
\end{thm}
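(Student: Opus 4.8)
The plan is to prove the set-level identity and the order-level identity first, and then to read off the pushout square~\eqref{po} from them via the universal property in the category of posets. Throughout, write $C=\mathrm{Exc}(\mathrm{coh}\text{-}\mathbb X)$, $B=\mathrm{Wid}(\mathrm{coh}\text{-}\mathbb X)^{\vec c}$, $D=\mathrm{Wid}(\mathrm{coh}\text{-}\mathbb X)$, and $A=B\cap C=\mathrm{Exc}(\mathrm{coh}\text{-}\mathbb X)\cap\mathrm{Wid}(\mathrm{coh}\text{-}\mathbb X)^{\vec c}$, so that the first assertion reads $D=B\cup C$ and the second reads $\mathrm{Ord}(D)=\mathrm{Ord}(B)\cup\mathrm{Ord}(C)$.

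For $D=B\cup C$ the inclusion $\supseteq$ is trivial. For $\subseteq$, I would take an arbitrary wide subcategory $\mathcal W$ and distinguish two cases. If $\mathcal W$ contains a vector bundle, then $\mathcal W\in\mathrm{Exc}(\mathrm{coh}\text{-}\mathbb X)$ by Proposition~\ref{thick_domestic_bundle}. If $\mathcal W$ contains no vector bundle, then since a wide subcategory is closed under direct summands (a standard consequence of closure under kernels) and every $F$ splits as $\mathrm tF\oplus(F/\mathrm tF)$ with $F/\mathrm tF$ a vector bundle, every object of $\mathcal W$ is a torsion sheaf; as $T\cong T(\vec c)$ for torsion $T$, such a $\mathcal W$ is automatically $\vec c$-invariant, i.e. $\mathcal W\in B$.

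The heart of the argument is the order identity, where again only $\mathrm{Ord}(D)\subseteq\mathrm{Ord}(B)\cup\mathrm{Ord}(C)$ needs proof. Given $\mathcal W_1\subseteq\mathcal W_2$ I would show that both lie in $C$ or both lie in $B$, splitting on whether $\mathcal W_2$ contains a vector bundle. If it does not, then $\mathcal W_2$, and hence $\mathcal W_1$, is torsion, so both are $\vec c$-invariant. If it does, then $\mathcal W_2\in C$ by Proposition~\ref{thick_domestic_bundle}; now if $\mathcal W_1$ also contains a vector bundle then $\mathcal W_1\in C$ as well and we are done, whereas if $\mathcal W_1$ contains no vector bundle it is torsion, hence $\vec c$-invariant. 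Here, if moreover $\mathcal W_1\in C$ then both $\mathcal W_1,\mathcal W_2\in C$, while if $\mathcal W_1\notin C$ then (using that $K_0(\mathrm{coh}\text{-}\mathbb X)$ has finite rank) Corollary~\ref{wide_gen_exc} forces $\mathcal W_1$ to contain a non-exceptional indecomposable object, necessarily a torsion sheaf; since $\mathcal W_1\subseteq\mathcal W_2$ and $\mathcal W_2$ contains a vector bundle, Proposition~\ref{contain_non_exc} then shows $\mathcal W_2\in B$, so that both $\mathcal W_1,\mathcal W_2\in B$. I expect this last step to be the main obstacle: it is precisely the assertion that a comparable pair cannot ``cross'' from $C\setminus B$ up into $B\setminus C$, and the whole point is that Proposition~\ref{contain_non_exc} (a vector bundle together with a non-exceptional torsion sheaf forces $\vec c$-invariance) rules this out.

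Finally, I would deduce the pushout by checking its universal property. The four maps in~\eqref{po} are inclusions, $A=B\cap C$, and $D=B\cup C$ as sets by the first identity. Given a poset $Q$ together with order-preserving maps $f\colon B\to Q$ and $g\colon C\to Q$ agreeing on $A$, the unique candidate mediating map $h\colon D\to Q$ is forced to equal $f$ on $B$ and $g$ on $C$; this is well defined because $f$ and $g$ agree on $A=B\cap C$, and it is unique because $D=B\cup C$. That $h$ is order-preserving is exactly the order identity: any relation $\mathcal W_1\subseteq\mathcal W_2$ in $D$ already holds in $B$ or in $C$, so $h$ preserves it. Hence $(D,\subseteq)$ satisfies the universal property of the pushout $B\sqcup_A C$, which is the square~\eqref{po}.
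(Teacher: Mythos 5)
Your proof is correct and follows essentially the same route as the paper: the set identity rests on Proposition~\ref{thick_domestic_bundle} plus the observation that bundle-free wide subcategories are torsion and hence $\vec c$-invariant (packaged in the paper as Corollary~\ref{thick_nexc}), the crucial non-crossing step combines Corollary~\ref{wide_gen_exc} with Proposition~\ref{contain_non_exc} exactly as the paper does, and your verification of the universal property is precisely the content of the paper's concluding lemma. If anything, your explicit case split on whether $\mathcal W_1$ and $\mathcal W_2$ contain vector bundles is slightly more careful than the paper's text, which applies Proposition~\ref{contain_non_exc} to the larger category without first noting that the bundle-free case is trivially $\vec c$-invariant.
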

\begin{proof}
   By Corollary \ref{thick_nexc}, we have $\mathrm{Wid}(\mathrm{coh} \text{-} \mathbb X)=\mathrm{Exc}(\mathrm{coh} \text{-} \mathbb X) \cup \mathrm{Wid}(\mathrm{coh} \text{-} \mathbb X)^{\vec c}$. So we only need to show that 
   \[
      \mathrm{Ord}(\mathrm{Wid}(\mathrm{coh} \text{-} \mathbb X))=\mathrm{Ord}(\mathrm{Exc}(\mathrm{coh} \text{-} \mathbb X)) \cup \mathrm{Ord}(\mathrm{Wid}(\mathrm{coh} \text{-} \mathbb X)^{\vec c}).
   \]

   Obviously $\mathrm{Ord}(\mathrm{Wid}(\mathrm{coh} \text{-} \mathbb X))\supseteq \mathrm{Ord}(\mathrm{Exc}(\mathrm{coh} \text{-} \mathbb X)) \cup \mathrm{Ord}(\mathrm{Wid}(\mathrm{coh} \text{-} \mathbb X)^{\vec c})$. Now let $(\mathcal A, \mathcal B) \in \mathrm{Ord}(\mathrm{Wid}(\mathrm{coh} \text{-} \mathbb X))$. If $(\mathcal A, \mathcal B) \notin \mathrm{Ord}(\mathrm{Exc}(\mathrm{coh} \text{-} \mathbb X))$, then it follows immediately that $\mathcal A \in \mathrm{NExc}(\mathrm{coh} \text{-} \mathbb X)$ or $\mathcal B \in \mathrm{NExc}(\mathrm{coh} \text{-} \mathbb X)$. We will show that $(\mathcal A, \mathcal B) \in \mathrm{Ord}(\mathrm{Wid}(\mathrm{coh} \text{-} \mathbb X)^{\vec c})$.
   
   If $\mathcal A \in \mathrm{NExc}(\mathrm{coh} \text{-} \mathbb X)$, then Corollary~\ref{thick_nexc} states that $\mathcal A$ is $\vec c$-invariant. And Proposition~\ref{thick_domestic_bundle} implies that $\mathcal A$ contains a non-exceptional torsion sheaf $N$, and then $N \in \mathcal B$. By Proposition~\ref{contain_non_exc}, we obtain $\mathcal B$ is $\vec c$-invariant. So $(\mathcal A, \mathcal B) \in \mathrm{Ord}(\mathrm{Wid}(\mathrm{coh} \text{-} \mathbb X)^{\vec c})$.

   If $\mathcal B \in \mathrm{NExc}(\mathrm{coh} \text{-} \mathbb X)$, then Corollary~\ref{thick_nexc} states that $\mathcal B \in \mathrm{Wid}(\mathrm{coh}_0 \text{-} \mathbb X)$, and then $\mathcal A \in \mathrm{Wid}(\mathrm{coh}_0 \text{-} \mathbb X)$. So $(\mathcal A, \mathcal B) \in \mathrm{Ord}(\mathrm{Wid}(\mathrm{coh} \text{-} \mathbb X)^{\vec c})$. Now we have proved that $\mathrm{Ord}(\mathrm{Wid}(\mathrm{coh} \text{-} \mathbb X))=\mathrm{Ord}(\mathrm{Exc}(\mathrm{coh} \text{-} \mathbb X)) \cup \mathrm{Ord}(\mathrm{Wid}(\mathrm{coh} \text{-} \mathbb X)^{\vec c})$.

   By the lemma bellow, diagram (\ref{po}) is a pushout.
\end{proof}

\begin{lem}
   Let $A$ be a poset, and let $A_1, A_2$ be full subposets of $A$. If $A=A_1 \cup A_2$ and $\mathrm{Ord}(A)=\mathrm{Ord}(A_1) \cup \mathrm{Ord}(A_2)$, then we have a pushout of posets
   \[
      \xymatrix{
         A_1 \cap A_2 \ar@{^(->}[r] \ar@{^(->}[d] & A_1 \ar@{^(->}[d] \\
         A_2 \ar@{^(->}[r] & A.
      }
   \]
\end{lem}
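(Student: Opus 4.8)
The plan is to verify the universal property of the pushout directly in the category $\mathbf{Poset}$, rather than appealing to the general colimit construction. First I would record that the square commutes: since all four arrows are inclusions and the two composites $A_1 \cap A_2 \hookrightarrow A_1 \hookrightarrow A$ and $A_1 \cap A_2 \hookrightarrow A_2 \hookrightarrow A$ both equal the inclusion $A_1 \cap A_2 \hookrightarrow A$, there is nothing to check beyond this. Then I would fix an arbitrary test cocone, i.e.\ a poset $Q$ together with order-preserving maps $f_1 \colon A_1 \to Q$ and $f_2 \colon A_2 \to Q$ agreeing on $A_1 \cap A_2$, and aim to produce a unique order-preserving $f \colon A \to Q$ with $f|_{A_1} = f_1$ and $f|_{A_2} = f_2$.

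The underlying map of sets is forced and routine: because $A = A_1 \cup A_2$, any $f$ restricting to $f_1$ and $f_2$ must send $x$ to $f_1(x)$ when $x \in A_1$ and to $f_2(x)$ when $x \in A_2$. This prescription is well defined precisely because $f_1$ and $f_2$ agree on the overlap $A_1 \cap A_2$, and it is the unique function with the required restrictions. This settles uniqueness and produces a candidate $f$ as a map of sets.

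The only substantive point, and the step I expect to be the crux, is that this $f$ is order-preserving, and it is exactly here that the hypothesis $\mathrm{Ord}(A) = \mathrm{Ord}(A_1) \cup \mathrm{Ord}(A_2)$ enters. Given $x \le_A y$, the pair $(x,y)$ lies in $\mathrm{Ord}(A)$, hence by hypothesis in $\mathrm{Ord}(A_1)$ or in $\mathrm{Ord}(A_2)$; in the first case $x,y \in A_1$ with $x \le_{A_1} y$, so $f(x) = f_1(x) \le f_1(y) = f(y)$, and the second case is symmetric. Thus every comparability in $A$ is witnessed inside a single one of the two subposets, where the corresponding $f_i$ already preserves order, so $f$ is monotone. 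This completes the verification of the universal property, and hence identifies $A$ with the pushout.

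I would close by stressing that the hypothesis $\mathrm{Ord}(A) = \mathrm{Ord}(A_1) \cup \mathrm{Ord}(A_2)$ is genuinely necessary and not merely convenient: the set-level pushout of $A_1 \leftarrow A_1 \cap A_2 \to A_2$ is automatically the union $A_1 \cup A_2 = A$, so the content of the lemma is entirely about the order, and a comparability $x \le_A y$ with $x \in A_1 \setminus A_2$ and $y \in A_2 \setminus A_1$ would otherwise fail to be forced in $Q$. Equivalently, one could instead identify the $\mathbf{Poset}$-pushout explicitly, with underlying set $A_1 \cup A_2$ and order the transitive closure of $\le_{A_1} \cup \le_{A_2}$, and then check that this closure coincides with $\le_A$ (one inclusion by transitivity of $\le_A$, the other directly from the displayed hypothesis) and is already antisymmetric, so that no poset reflection is needed; the direct universal-property argument above simply packages this more cleanly.
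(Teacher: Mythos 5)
Your proof is correct and takes exactly the paper's route: the paper's entire proof is the remark that ``the universal property of the square is easy to check,'' and you have simply carried out that check, correctly locating the crux in the monotonicity of the induced map $f$, where the hypothesis $\mathrm{Ord}(A)=\mathrm{Ord}(A_1)\cup\mathrm{Ord}(A_2)$ guarantees every comparability in $A$ is witnessed inside a single $A_i$. Your closing observations (necessity of the hypothesis, and the identification of the poset pushout as the transitive closure of $\le_{A_1}\cup\le_{A_2}$ with no poset reflection needed) are accurate supplements to what the paper leaves implicit.
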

\begin{proof}
   The universal property of the square is easy to check.
\end{proof}

Since $\vec c$-invariant wide subcategories are determined by their stalks, we can describe the poset structure of $\mathrm{Wid}(\mathrm{coh} \text{-} \mathbb X)^{\vec c}$ for a weighted projective line $\mathbb X$ of any type; see Proposition~\ref{c_inv_order}. It relies on the poset structure of $\mathrm{Wid}(\mathrm{tor}_{x} \text{-} \mathbb X)$; see Proposition~\ref{tor_order}.

\begin{prop} \label{c_inv_order}
   Let $\mathbb X$ be a weighted projective line of any type, and let $\mathcal A$, $\mathcal B$ be $\vec c$-invariant wide subcategories of $\mathrm{coh} \text{-} \mathbb X$. Then the following statements hold.

   (1) If $\mathcal A, \mathcal B \in \mathrm{Wid}(\mathrm{coh}_0 \text{-} \mathbb X)$ or $\mathcal A, \mathcal B \notin \mathrm{Wid}(\mathrm{coh}_0 \text{-} \mathbb X)$, then $\mathcal A \leq \mathcal B$ if and only if $\mathcal A \cap \mathrm{tor}_{x} \text{-} \mathbb X \leq \mathcal B \cap \mathrm{tor}_{x} \text{-} \mathbb X$ for any $x \in \mathbb X$.

   (2) If $\mathcal A \in \mathrm{Wid}(\mathrm{coh}_0 \text{-} \mathbb X)$, $\mathcal B \notin \mathrm{Wid}(\mathrm{coh}_0 \text{-} \mathbb X)$, then $\mathcal A < \mathcal B$ if and only if $\mathcal A \cap \mathrm{tor}_{x} \text{-} \mathbb X \leq \mathcal B \cap \mathrm{tor}_{x} \text{-} \mathbb X$ for any $x \in \mathbb X$.
   
   (3) If $\mathcal A \notin \mathrm{Wid}(\mathrm{coh}_0 \text{-} \mathbb X)$, $\mathcal B \in \mathrm{Wid}(\mathrm{coh}_0 \text{-} \mathbb X)$, then $\mathcal A \nleq \mathcal B$.
\end{prop}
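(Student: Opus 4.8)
The plan is to reduce the whole statement to two structural facts about $\vec c$-invariant wide subcategories, which I would record first. Fact one: if $\mathcal W$ lies in $\mathrm{Wid}(\mathrm{coh}_0 \text{-} \mathbb X)$, then since every torsion sheaf splits as $F=\bigoplus_x F_x$ along its support and wide subcategories are closed under direct summands (a summand being the kernel of the complementary idempotent), one has the support decomposition $\mathcal W=\coprod_{x\in\mathbb X}(\mathcal W\cap\mathrm{tor}_{x}\text{-}\mathbb X)$. Fact two: if $\mathcal W$ does not lie in $\mathrm{Wid}(\mathrm{coh}_0 \text{-} \mathbb X)$, then by Corollary~\ref{cor_c_inv} it contains a vector bundle, so by the proof of Theorem~\ref{c_inv}(2) its stalk $\mathcal W\cap\mathrm{tor}_{\lambda}\text{-}\mathbb X$ at every ordinary point $\lambda$ is the full tube, and by Theorem~\ref{c_inv} it equals $\langle E^1,\ldots,E^n\rangle^{\perp}$ for a torsion exceptional sequence whose blocks satisfy $\mathcal W\cap\mathrm{tor}_{\lambda_i}\text{-}\mathbb X=(E^i)^{\perp}\cap\mathrm{tor}_{\lambda_i}\text{-}\mathbb X$.

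With the support decomposition in hand, parts (2) and (3) and the first case of part (1) are short. For part (3), $\mathcal A$ contains a vector bundle while $\mathcal B\subseteq\mathrm{coh}_0 \text{-} \mathbb X$, so $\mathcal A\nleq\mathcal B$ immediately. For part (2) and the subcase of part (1) in which both $\mathcal A,\mathcal B$ lie in $\mathrm{Wid}(\mathrm{coh}_0 \text{-} \mathbb X)$, the forward implication is trivial, and the backward one follows by applying Fact one to $\mathcal A$: from $\mathcal A\cap\mathrm{tor}_{x}\text{-}\mathbb X\subseteq\mathcal B\cap\mathrm{tor}_{x}\text{-}\mathbb X$ for all $x$ I would conclude $\mathcal A=\coprod_x(\mathcal A\cap\mathrm{tor}_{x}\text{-}\mathbb X)\subseteq\mathcal B$. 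In part (2) the inclusion is automatically strict, since $\mathcal B$ contains a vector bundle that $\mathcal A$ cannot, upgrading $\mathcal A\subseteq\mathcal B$ to $\mathcal A<\mathcal B$.

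The substantive case is part (1) when neither $\mathcal A$ nor $\mathcal B$ lies in $\mathrm{Wid}(\mathrm{coh}_0 \text{-} \mathbb X)$; this is where I expect the real work. Here both contain a vector bundle, so both have full stalks at the ordinary points and the hypothesis carries information only at the exceptional points $\lambda_i$. Writing $\mathcal A=\langle E^1,\ldots,E^n\rangle^{\perp}$ and $\mathcal B=\langle F^1,\ldots,F^n\rangle^{\perp}$ as in Fact two, I would work inside each tube $\mathrm{tor}_{\lambda_i}\text{-}\mathbb X\cong\mathcal U_{p_i}$: by Proposition~\ref{thick_uniserial} the block $\langle E^i\rangle$ is the left perpendicular of $\mathcal A\cap\mathrm{tor}_{\lambda_i}\text{-}\mathbb X$ inside $\mathcal U_{p_i}$, and similarly for $F^i$. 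Since taking perpendiculars reverses inclusion, the hypothesis $\mathcal A\cap\mathrm{tor}_{\lambda_i}\text{-}\mathbb X\subseteq\mathcal B\cap\mathrm{tor}_{\lambda_i}\text{-}\mathbb X$ yields $\langle F^i\rangle\subseteq\langle E^i\rangle$ for each $i$, and hence $\langle F^1,\ldots,F^n\rangle\subseteq\langle E^1,\ldots,E^n\rangle$.

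Applying perpendicularity once more, now in $\mathrm{coh} \text{-} \mathbb X$ and using $\mathcal X^{\perp}=\langle\mathcal X\rangle^{\perp}$, reverses this inclusion back to $\mathcal A=\langle E^1,\ldots,E^n\rangle^{\perp}\subseteq\langle F^1,\ldots,F^n\rangle^{\perp}=\mathcal B$, as desired; the forward implication is again trivial. The delicate points to verify are that the two perpendicular operations interact correctly — one relative to the tube $\mathcal U_{p_i}$ and one relative to the whole category, for which I rely on the stalk identification of Theorem~\ref{c_inv}(1) — and that the ordinary points genuinely impose no extra constraint, because both subcategories already contain every simple sheaf supported there. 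I expect the main obstacle to be bookkeeping the order-reversal across these two nested perpendicular calculations rather than any new idea.
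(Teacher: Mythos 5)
Your proposal is correct and takes essentially the same approach as the paper: reduce to the case $\mathcal A, \mathcal B \notin \mathrm{Wid}(\mathrm{coh}_0 \text{-} \mathbb X)$ (the torsion cases via the support decomposition, which the paper leaves implicit), write $\mathcal A=\langle E^1,\ldots,E^n\rangle^{\perp}$ and $\mathcal B=\langle F^1,\ldots,F^n\rangle^{\perp}$ via Theorem~\ref{c_inv}, convert stalk inclusions at exceptional points into $\langle F^i\rangle \subseteq \langle E^i\rangle$ through the tube-level correspondence of Proposition~\ref{thick_uniserial}, and perpendicularize back in $\mathrm{coh}\text{-}\mathbb X$, noting that ordinary points carry no constraint since both stalks there are full tubes. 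The only cosmetic difference is that you obtain the forward implication trivially by intersecting with $\mathrm{tor}_x\text{-}\mathbb X$, whereas the paper runs a chain of equivalences that implicitly uses $\prescript{\perp}{}{\mathcal A}=\langle E^1,\ldots,E^n\rangle$ from Proposition~\ref{perp_cal}(2).
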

\begin{proof}
   We only need to prove the case when $\mathcal A, \mathcal B \notin \mathrm{Wid}(\mathrm{coh}_0 \text{-} \mathbb X)$. By Theorem~\ref{c_inv}(2), we can assume that $\mathcal A=(E^1,\cdots , E^n)^{\perp}$, $\mathcal B=(F^1,\cdots , F^n)^{\perp}$, where $E^i$ and $F^i$ are exceptional sequences in $\mathrm{tor}_{\lambda_i} \text{-} \mathbb X$ for $i=1, 2, \cdots , n$. So $\mathcal A \leq \mathcal B$ if and only if $\langle F^1, \cdots, F^n \rangle \leq \langle E^1, \cdots, E^n \rangle$, if and only if $\langle F^i\rangle \leq \langle E^i \rangle$ for any $i=1,2, \cdots, n$, if and only if $(E^i)^{\perp} \cap \mathrm{tor}_{\lambda_i} \text{-} \mathbb X \leq (F^i)^{\perp} \cap \mathrm{tor}_{\lambda_i} \text{-} \mathbb X$, i.e. $\mathcal A \cap \mathrm{tor}_{\lambda_i} \text{-} \mathbb X \leq \mathcal B \cap \mathrm{tor}_{\lambda_i} \text{-} \mathbb X$ for any $i=1,2, \cdots, n$. For any ordinary point $x$, we always have $\mathcal A \cap \mathrm{tor}_{x} \text{-} \mathbb X =\mathrm{tor}_{x} \text{-} \mathbb X= \mathcal B \cap \mathrm{tor}_{x} \text{-} \mathbb X$.
\end{proof}

By Proposition~\ref{domestic_order}, we can describe the poset structure of $\mathrm{Wid}(\mathrm{tor}_{\lambda_i} \text{-} \mathbb X)$. And by \cite[Corollary 9.2]{krause2021category}, it is isomorphic to $\mathrm{NC}^B(p_i)$, the lattice of non-crossing partitions of type $B$.

\begin{prop} \label{tor_order}
   Let $\mathcal A, \mathcal B$ be wide subcategories of $\mathrm{tor}_{\lambda_i} \text{-} \mathbb X$. Then the following statements hold.

   (1) If $\mathcal A, \mathcal B \in \mathrm{Exc}(\mathrm{tor}_{\lambda_i} \text{-} \mathbb X)$, then $\mathcal A \leq \mathcal B$ if and only if $\mathrm{cox}(\mathcal A) \leq \mathrm{cox}(\mathcal B)$.

   (2) If $\mathcal A \in \mathrm{Exc}(\mathrm{tor}_{\lambda_i} \text{-} \mathbb X), \mathcal B \in \mathrm{NExc}(\mathrm{tor}_{\lambda_i} \text{-} \mathbb X)$, then $\mathcal A < \mathcal B$ if and only if $\mathrm{cox}(\mathcal A) \cdot \mathrm{cox}(\prescript{\perp}{}{\mathcal B} \cap \mathrm{tor}_{\lambda_i} \text{-} \mathbb X)<c_{\mathbb X}$.

   (3) If $\mathcal A, \mathcal B \in \mathrm{NExc}(\mathrm{tor}_{\lambda_i} \text{-} \mathbb X)$, then $\mathcal A \leq \mathcal B$ if and only if $\mathrm{cox}(\prescript{\perp}{}{\mathcal B} \cap \mathrm{tor}_{\lambda_i} \text{-} \mathbb X) \leq \mathrm{cox}(\prescript{\perp}{}{\mathcal A} \cap \mathrm{tor}_{\lambda_i} \text{-} \mathbb X)$.

   (4) If $\mathcal A \in \mathrm{NExc}(\mathrm{tor}_{\lambda_i} \text{-} \mathbb X), \mathcal B \in \mathrm{Exc}(\mathrm{tor}_{\lambda_i} \text{-} \mathbb X)$, then $\mathcal A \nleq \mathcal B$.
\end{prop}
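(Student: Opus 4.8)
The plan is to reduce all four cases to the poset isomorphism $\mathrm{cox}\colon \mathrm{Exc}(\mathrm{coh}\text{-}\mathbb X)\to \mathrm{NC}(W,c_{\mathbb X})$ of Proposition~\ref{domestic_order}, together with two structural facts about the tube $\mathrm{tor}_{\lambda_i}\text{-}\mathbb X\cong\mathcal U_{p_i}$. First, inclusion order on wide subcategories of the tube is just the restriction of inclusion order in $\mathrm{coh}\text{-}\mathbb X$, and since the tube is wide, $\mathrm{Hom}_{\mathbb X}$ and $\mathrm{Ext}^1_{\mathbb X}$ computed inside the tube agree with those in $\mathrm{coh}\text{-}\mathbb X$; in particular $\prescript{\perp}{}{\mathcal B}\cap\mathrm{tor}_{\lambda_i}\text{-}\mathbb X$ is the perpendicular of $\mathcal B$ formed inside the tube. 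Second, I would use the bijection $\mathrm{Exc}(\mathcal U_{p_i})\leftrightarrow\mathrm{NExc}(\mathcal U_{p_i})$, $\mathcal W\mapsto\mathcal W^\perp$, $\mathcal V\mapsto\prescript{\perp}{}{\mathcal V}$ of Proposition~\ref{thick_uniserial} to trade every $\mathrm{NExc}$ subcategory for its $\mathrm{Exc}$ complement.

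I would settle (1) and (4) first, as they are direct. For (1), both $\mathcal A,\mathcal B\in\mathrm{Exc}(\mathrm{tor}_{\lambda_i}\text{-}\mathbb X)$ also lie in $\mathrm{Exc}(\mathrm{coh}\text{-}\mathbb X)$, an exceptional sequence in the tube being exceptional in $\mathrm{coh}\text{-}\mathbb X$; so the claim is immediate from Proposition~\ref{domestic_order}. For (4), every $\mathcal A\in\mathrm{NExc}(\mathrm{tor}_{\lambda_i}\text{-}\mathbb X)$ contains by Proposition~\ref{uniserial}(3) an indecomposable $U$ of length $p_i$, which is the full uniserial object and hence has $\mathrm{CF}(U)=\{S_{i,1},\dots,S_{i,p_i}\}$; since $\mathcal B\in\mathrm{Exc}$ satisfies $\mathrm{CF}(\mathcal B)\neq\{S_{i,1},\dots,S_{i,p_i}\}$ by Proposition~\ref{uniserial}(2), we get $U\notin\mathcal B$, so $\mathcal A\nleq\mathcal B$.

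For (3), the perpendicular bijection of Proposition~\ref{thick_uniserial} is order-reversing (a perpendicular reverses inclusion, and the two assignments are mutually inverse), so for $\mathcal A,\mathcal B\in\mathrm{NExc}$ one has $\mathcal A\subseteq\mathcal B$ if and only if $\prescript{\perp}{}{\mathcal B}\subseteq\prescript{\perp}{}{\mathcal A}$; both sides lie in $\mathrm{Exc}$, so applying (1) yields the stated inequality $\mathrm{cox}(\prescript{\perp}{}{\mathcal B}\cap\mathrm{tor}_{\lambda_i}\text{-}\mathbb X)\leq \mathrm{cox}(\prescript{\perp}{}{\mathcal A}\cap\mathrm{tor}_{\lambda_i}\text{-}\mathbb X)$. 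For (2), write $\mathcal C:=\prescript{\perp}{}{\mathcal B}\cap\mathrm{tor}_{\lambda_i}\text{-}\mathbb X\in\mathrm{Exc}$, so that $\mathcal B=\mathcal C^{\perp}$ inside the tube. Then $\mathcal A\subseteq\mathcal B$ is equivalent to $\mathrm{Hom}_{\mathbb X}(\mathcal C,\mathcal A)=0=\mathrm{Ext}^1_{\mathbb X}(\mathcal C,\mathcal A)$, i.e. to orthogonality of $\mathcal A$ and $\mathcal C$. When this holds I would place an exceptional sequence for $\mathcal A$ in front of one for $\mathcal C$: the orthogonality makes the concatenation an exceptional sequence generating $\langle\mathcal A,\mathcal C\rangle\in\mathrm{Exc}$, so $\mathrm{cox}(\langle\mathcal A,\mathcal C\rangle)=\mathrm{cox}(\mathcal A)\cdot\mathrm{cox}(\mathcal C)$; and because $\langle\mathcal A,\mathcal C\rangle\subseteq\mathrm{tor}_{\lambda_i}\text{-}\mathbb X$ is a proper wide subcategory, its image is a noncrossing partition strictly below $c_{\mathbb X}$, giving $\mathrm{cox}(\mathcal A)\cdot\mathrm{cox}(\mathcal C)<c_{\mathbb X}$.

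The hard part will be the converse implication in (2): recovering orthogonality of $\mathcal A$ and $\mathcal C$ from the bare assumption $\mathrm{cox}(\mathcal A)\cdot\mathrm{cox}(\mathcal C)<c_{\mathbb X}$. Since reflection length is only subadditive, $uv\leq c_{\mathbb X}$ does not formally force the length additivity $l(uv)=l(u)+l(v)$ that encodes concatenability of the two exceptional sequences; so the delicate point is to show that a \emph{non}-orthogonal pair, where some nonzero morphism or extension runs from $\mathcal C$ to $\mathcal A$, produces a product that fails to lie below $c_{\mathbb X}$. I plan to handle this by passing to $\mathcal D^b$ and invoking the reflection-length and Hurwitz-action description of noncrossing partitions underlying Theorem~\ref{hubery_krause} and Proposition~\ref{domestic_order}: an element of $W$ lies in $\mathrm{NC}(W,c_{\mathbb X})$ exactly when it is $\mathrm{cox}$ of a thick subcategory generated by an exceptional sequence, and $\mathrm{cox}(\mathcal A)\cdot\mathrm{cox}(\mathcal C)$ is realized in this form precisely when the two sequences concatenate, i.e. precisely when $\mathcal A$ and $\mathcal C$ are orthogonal. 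The rank-two tube $\mathcal U_2$, where $\langle S_1\rangle$ and $\langle S_2\rangle$ are not orthogonal and $[S_1]+[S_2]$ is the imaginary root $\delta$, is the guiding example confirming that non-orthogonality genuinely pushes the product off the interval below $c_{\mathbb X}$.
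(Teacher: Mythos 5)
Your treatments of (1), (3) and (4) coincide with the paper's: (1) is read off from Proposition~\ref{domestic_order} after noting $\mathrm{Exc}(\mathrm{tor}_{\lambda_i}\text{-}\mathbb X)\subseteq\mathrm{Exc}(\mathrm{coh}\text{-}\mathbb X)$; (3) combines the order-reversing, mutually inverse bijection of Proposition~\ref{thick_uniserial} with (1); (4) is in substance the paper's appeal to Proposition~\ref{uniserial}(2),(3). The forward implication of your (2) is also sound: orthogonality yields a concatenated exceptional sequence, so $\mathrm{cox}(\mathcal A)\mathrm{cox}(\mathcal C)=\mathrm{cox}(\langle \mathcal A,\mathcal C\rangle)$, which is strictly below $c_{\mathbb X}$ by injectivity of $\mathrm{cox}$ since $\langle\mathcal A,\mathcal C\rangle\neq\mathrm{coh}\text{-}\mathbb X$.

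The converse of (2) is a genuine gap, and your proposed fix cannot work as stated. Under your own reading of the inequality --- plain absolute order, with no length-additivity imposed --- the claim you set out to prove (``a non-orthogonal pair produces a product that fails to lie below $c_{\mathbb X}$'') is false: take $\mathcal A=\mathcal C=\langle S\rangle$ for an exceptional simple $S$ in the tube, so that $\mathcal B=\langle S\rangle^{\perp}\cap\mathrm{tor}_{\lambda_i}\text{-}\mathbb X\in\mathrm{NExc}(\mathrm{tor}_{\lambda_i}\text{-}\mathbb X)$ and $\prescript{\perp}{}{\mathcal B}\cap\mathrm{tor}_{\lambda_i}\text{-}\mathbb X=\langle S\rangle$ by Proposition~\ref{thick_uniserial}; then $\mathrm{cox}(\mathcal A)\cdot\mathrm{cox}(\mathcal C)=s_{[S]}^2=1<c_{\mathbb X}$ in the absolute order, while $\mathcal A\nleq\mathcal B$ because $\mathrm{Hom}_{\mathbb X}(S,S)\neq0$. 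So the product condition must be understood with the length-additivity $l(uv)=l(u)+l(v)$ built in (equivalently, through the Kreweras complement), and your Hurwitz-action sketch --- ``the product is realized as $\mathrm{cox}$ of an exceptional thick subcategory precisely when the two sequences concatenate'' --- is precisely the unproved point rather than a proof of it; your rank-two guiding example does not generalize. The paper never decomposes the product at all: it rewrites $\mathcal A<\mathcal B$ as $\mathcal A<\mathcal C^{\perp}$, where $\mathcal C:=\prescript{\perp}{}{\mathcal B}\cap\mathrm{tor}_{\lambda_i}\text{-}\mathbb X$ and the perpendicular is now taken in $\mathrm{coh}\text{-}\mathbb X$ (legitimate since $\mathcal A$ consists of torsion sheaves), observes that $\mathcal C^{\perp}\in\mathrm{Exc}(\mathrm{coh}\text{-}\mathbb X)$ with $\mathrm{cox}(\mathcal C^{\perp})=c_{\mathbb X}\cdot\mathrm{cox}(\mathcal C)^{-1}$ via the complete Ext-orthogonal pair of Proposition~\ref{perp_cal}, and then applies the poset isomorphism of Proposition~\ref{domestic_order} to both sides to get $\mathrm{cox}(\mathcal A)<c_{\mathbb X}\cdot\mathrm{cox}(\mathcal C)^{-1}$, which is the Kreweras-complement form of the stated inequality. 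Replacing your converse by this reduction closes the gap.
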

\begin{proof}
   (1) It follows directly from Proposition~\ref{domestic_order}.

   (2) Note that $\mathcal A < \mathcal B$ if and only if $\mathcal A < (\prescript{\perp}{}{\mathcal B} \cap \mathrm{tor}_{\lambda_i} \text{-} \mathbb X)^{\perp}$. By Proposition~\ref{thick_uniserial}, we have $\prescript{\perp}{}{\mathcal B} \cap \mathrm{tor}_{\lambda_i} \text{-} \mathbb X \in \mathrm{Exc}(\mathrm{tor}_{\lambda_i} \text{-} \mathbb X) \subseteq \mathrm{Exc}(\mathrm{coh} \text{-} \mathbb X)$. By Proposition~\ref{domestic_order}, $\mathcal A < (\prescript{\perp}{}{\mathcal B} \cap \mathrm{tor}_{\lambda_i} \text{-} \mathbb X)^{\perp}$ if and only if $\mathrm{cox}(\mathcal A) < c_{\mathbb X} \cdot \mathrm{cox}(\prescript{\perp}{}{\mathcal B} \cap \mathrm{tor}_{\lambda_i} \text{-} \mathbb X)^{-1}$, i.e. $\mathrm{cox}(\mathcal A) \cdot \mathrm{cox}(\prescript{\perp}{}{\mathcal B} \cap \mathrm{tor}_{\lambda_i} \text{-} \mathbb X)<c_{\mathbb X}$.

   (3) By Proposition~\ref{thick_uniserial}, both $\prescript{\perp}{}{\mathcal A} \cap \mathrm{tor}_{\lambda_i} \text{-} \mathbb X$ and $\prescript{\perp}{}{\mathcal B} \cap \mathrm{tor}_{\lambda_i} \text{-} \mathbb X$ can be generated by an exceptional sequence. So $\mathcal A \leq \mathcal B$ if and only if $\prescript{\perp}{}{\mathcal B} \cap \mathrm{tor}_{\lambda_i} \text{-} \mathbb X \leq \prescript{\perp}{}{\mathcal A} \cap \mathrm{tor}_{\lambda_i} \text{-} \mathbb X$, if and only if $\mathrm{cox}(\prescript{\perp}{}{\mathcal B} \cap \mathrm{tor}_{\lambda_i} \text{-} \mathbb X) \leq \mathrm{cox}(\prescript{\perp}{}{\mathcal A} \cap \mathrm{tor}_{\lambda_i} \text{-} \mathbb X)$ by Proposition~\ref{domestic_order}.

   (4) It follows directly from Proposition~\ref{uniserial}(2) and (3).
\end{proof}

\begin{rmk}
   {\rm The poset $\mathrm{Wid}(\mathrm{coh} \text{-} \mathbb X)^{\vec c}$ is a lattice. If $\mathcal A$, $\mathcal B \in \mathrm{Wid}(\mathrm{coh} \text{-} \mathbb X)^{\vec c}$, obviously $\mathcal A \cap \mathcal B \in \mathrm{Wid}(\mathrm{coh} \text{-} \mathbb X)^{\vec c}$. By Proposition \ref{contain_non_exc}, we have that $\langle \mathcal A,\mathcal B \rangle \in \mathrm{Wid}(\mathrm{coh} \text{-} \mathbb X)^{\vec c}$. }
\end{rmk}

\section{Two examples}

In this section we give two examples. In the first example, we recover the result in \cite[Subsection~4.1]{krause2017derived} via Theorem~\ref{main}. The second shows the poset structure of $\mathrm{Wid}(\mathrm{coh}\text{-}\mathbb X(2))$, which is a weighted projective line of domestic type.

\begin{exa} [{\cite[Subsection~4.1]{krause2017derived}}]
We view the projective line $\mathbb P^1(\mathbb C)$ as a weighted projective line graded by $\mathbb Z$. Note that $\vec c=1$. By Theorem~\ref{c_inv}, we have $\mathrm{Wid}(\mathrm{coh} \text{-}\mathbb P^1)^{\vec c}=\mathrm{Wid}(\mathrm{coh}_0 \text{-} \mathbb P^1) \cup \{\mathrm{coh} \text{-}\mathbb P^1 \}$. It follows that the lattice $\mathrm{Wid}(\mathrm{coh} \text{-}\mathbb P^1)^{\vec c}$ is isomorphic to the lattice of specialized closed subsets of $\mathbb P^1$.

Since the exceptional objects of $\mathrm{coh} \text{-} \mathbb P^1$ are exactly $\{\mathcal O(l) \ | \ l \in \mathbb Z\}$, we have that $\mathrm{Exc}(\mathrm{coh} \text{-}\mathbb P^1)=\{\langle \mathcal O(l) \rangle | l \in \mathbb Z \} \cup \{0 \} \cup \{\mathrm{coh} \text{-}\mathbb P^1 \}$. It follows that $\mathrm{Exc}(\mathrm{coh} \text{-}\mathbb P^1) \cong \mathbb Z$, where $\mathbb Z$ is the lattice given by the Hasse diagram (\ref{lattice_z}).

By Theorem~\ref{main}, we have an isomorphism of lattices 
\[
   \mathrm{Wid}(\mathrm{coh} \text{-}\mathbb P^1) \cong \{\text{specialization closed subsets of } \mathbb P^1 \} \coprod \mathbb Z.
\]
\end{exa}

\begin{exa}
Let $\mathbb X$ be a weighted projective line of type $(2)$. Then $L({\bf p})=\langle \vec{x}_1, \vec{x}_2 | 2\vec{x}_1=\vec{x}_2 \rangle $. The homogeneous coordinate ring $S=k[x_1,x_2]$, with $\mathrm{deg}(x_1)=\vec{x}_1$, $\mathrm{deg}(x_2)=\vec{x}_2$. We have $D^b(\mathrm{coh} \text{-} \mathbb X) \cong D^b(kQ)$, where $Q$ is the quiver
\[\xymatrix{
 & \cdot \ar[rd] & \\
\cdot \ar[ur] \ar[rr] & &\cdot
}\]
We calculate the poset $\mathrm{Wid}(\mathrm{coh} \text{-} \mathbb X)$. 

By Theorem \ref{domestic}, line bundles $\{\mathcal O, \mathcal O(\vec{x}_1), \mathcal O(\vec{x}_2)\}$ form a $\tau$-slice of the AR-quiver. So all the indecomposable bundles are line bundles. We notice that $(\mathcal O, \mathcal O(\vec l))$ forms an exceptional sequence if and only if $\vec c+\vec w \leq \vec l \leq \vec c$, i.e. $-\vec{x}_1 \leq \vec l \leq \vec{x}_2$. So the set $\mathrm{Exc}(\mathrm{coh} \text{-} \mathbb X)$ is clear: let $\mathcal T_0=\langle \mathcal O \rangle$, $\mathcal T_1=\langle \mathcal O, \mathcal O({\vec x_1)} \rangle$, $\mathcal T_2=\langle \mathcal O, \mathcal O({\vec x_2)} \rangle$, then
\[
\mathrm{Exc}(\mathrm{coh} \text{-} \mathbb X)=\{\mathcal T_i (n \vec x_1) | i=0,1,2; n \in \mathbb Z \} \cup \{\langle S_{\infty,0} \rangle \} \cup \{\langle S_{\infty,1} \rangle \} \cup \{0 \} \cup \{\mathrm{coh} \text{-} \mathbb X\},
\]
where $S_{\infty,i}$ is defined to be the cokernel of $\mathcal O((i-1)\vec x_1) \to \mathcal O(i \vec x_1)$. And the poset structure is also clear: $\langle S_{\infty,1} \rangle <\mathcal T_1(2n \vec x_1)$, $\langle S_{\infty,0} \rangle <\mathcal T_1((2n-1) \vec x_1)$, $\mathcal T_0(n \vec x_1)< \mathcal T_1(n \vec x_1)$,$\mathcal T_0(n \vec x_1)< \mathcal T_1((n-1) \vec x_1)$, $\mathcal T_2 >\mathcal T_0(2n \vec x_1)$, $\mathcal T_2 (\vec x_1)>\mathcal T_0((2n-1) \vec x_1)$, $n \in \mathbb Z$. The Hasse diagram of $\mathrm{Exc}(\mathrm{coh} \text{-} \mathbb X)\setminus (\{0\} \cup \{ \mathrm{coh} \text{-} \mathbb X\} ) $ is as follows:

\xymatrix@=1.5ex{
 & & & \mathcal T_2 \ar @/_1.3pc/ @{-} [lldd] \ar @/^1.5pc/ @{-} [rrdd] & & \mathcal T_2(\vec x_1) \ar @/_1.5pc/ @{-} [lldd] \ar @/^1.5pc/ @{-} [rrdd] & & & & & \\
\cdots & & \mathcal T_1 \ar@{-} [dl] \ar@{-} [dr] & & \mathcal T_1(\vec x_1) \ar@{-} [dl] \ar@{-}[dr] & & \mathcal T_1(2\vec x_1) \ar@{-} [dl] \ar@{-} [dr] & & \mathcal T_1(3\vec x_1) \ar@{-} [dl]  & \cdots \\
\cdots & \mathcal T_0 & & \mathcal T_0(\vec x_1) & & \mathcal T_0(2\vec x_1) & & \mathcal T_0(3\vec x_1) & & \cdots \\
 & & & & \langle S_{\infty,1} \rangle \ar @/^1.5pc/ @{-} [lluu] \ar @/_1.5pc/ @{-} [rruu] & & \langle S_{\infty,0} \rangle \ar @/^1.5pc/ @{-} [lluu] \ar @/_1.5pc/ @{-} [rruu] & & & &
}

By Theorem~\ref{c_inv}, we have
\[
\mathrm{Wid}(\mathrm{coh} \text{-} \mathbb X)^{\vec c}=\prod_{\lambda \in \mathbb X}\mathrm{Wid}(\mathcal U_{\lambda}) \cup \{ S_{\infty,0}^{\perp} \} \cup \{ S_{\infty,1}^{\perp} \} \cup \{\mathrm{coh} \text{-} \mathbb X \}
\]
as a set. We know that $S_{\infty,0}^{\perp}=\mathcal T_2$, $S_{\infty,1}^{\perp}= \mathcal T_2 (\vec x_1)$.

Since $\mathcal U_\infty$ is a tube of rank 2, we have
\[
\mathrm{Wid}(\mathcal U_{\infty})=\{\{0\}, \langle S_{\infty,0} \rangle, \langle S_{\infty,1} \rangle, \langle S_{\infty,0}^{[2]} \rangle, \langle S_{\infty,1}^{[2]} \rangle, \mathcal U_{\infty} \}.
\]
We know that $\langle S_{\infty,0}^{[2]} \rangle= \mathcal T_2 \cap \mathcal U_{\infty}$, $\langle S_{\infty,1}^{[2]} \rangle = \mathcal T_2 (\vec x_1) \cap \mathcal U_{\infty}$. Then the partial orders on $\mathrm{Wid}(\mathrm{coh} \text{-} \mathbb X)^{\vec c}$ follows easily from Proposition~\ref{c_inv_order}, and the poset structure of $\mathrm{Wid}(\mathcal U_{\infty})$ is trivial.
\end{exa}

\section*{Acknowledgements}

The author thanks Professor Henning Krause for suggesting the author to work on this problem, and for sharing his notes on perpendicular calculus for hereditary categories. And the author acknowledges Professor Xiao-Wu Chen for many helpful discussions and advices. Finally, the author thanks the China Scholarship Council for the financial support.


\end{document}